\documentclass[12pt]{amsart}
\usepackage{amsmath}
\usepackage{mathrsfs}
\usepackage{psfrag}
\usepackage[cmtip,line,color,poly,all]{xy}
\usepackage{graphicx,amssymb,booktabs, verbatim, mathtools}
\usepackage[noadjust]{cite}
\usepackage{xspace}
\usepackage{blindtext}

\usepackage[pdfusetitle, pdfpagelabels, plainpages=false,
  bookmarks, bookmarksnumbered
]{hyperref} 
\usepackage{bookmark}
\usepackage{subfigure}
        \headheight=7pt
        \textheight=574pt
        \textwidth=432pt
        \topmargin=14pt
        \oddsidemargin=18pt
        \evensidemargin=18pt
\newif\iffloattoend


\iffloattoend
\usepackage[figuresonly, nomarkers, nolists, heads]{endfloat}

\fi
\newif\ifvc
\vcfalse
\ifvc
\input{vc}
\fi

\newcommand{\RR}{\mathbb R}
\newcommand{\ZZ}{\mathbb Z}
\newcommand{\QQ}{\mathbb Q}
\newcommand{\CC}{\mathbb C}
\newcommand{\HH}{\mathbb H}
\newcommand{\OO}{\mathbb O}
\renewcommand{\AA}{\mathbb A}
\newcommand{\LL}{\mathbb S}
\newcommand{\JJ}{\mathbb J}
\newcommand{\jordan}{\bullet}
\newcommand{\x}{\mathbf{x}}
\newcommand{\y}{\mathbf{y}}
\newcommand{\g}{\mathbf{g}}
\newcommand{\m}{\mathbf{m}}
\newcommand{\fn}{\mathrm{fn}}
\newcommand{\ip}[1]{\left \langle #1 \right \rangle}
\newcommand{\ipA}[1]{\left \langle #1 \right \rangle_{\AA }}
\newcommand{\ipL}[1]{\left \langle #1 \right \rangle_{\LL }}
\newcommand{\fps}[1]{[\![#1]\!]}

\newcommand{\sE}{\mathscr{E}}

\DeclareMathOperator{\Aut}{Aut}




%
\DeclareMathOperator{\Tr}{Tr}
\DeclareMathOperator{\Norm}{Norm}




%
%
%
%
%


\swapnumbers
\theoremstyle{plain}
\newtheorem{thm}[subsection]{Theorem}

\newtheorem{prop}[subsection]{Proposition}

\theoremstyle{definition}
\newtheorem{defn}[subsection]{Definition}
\newtheorem{remark}[subsection]{Remark}

\newtheorem{ex}[subsection]{Example}

\catcode`\@=11
\def\@secnumfont{\bfseries}
\catcode`\@12

\makeatletter



\begin{document}

\title[Exotic matrix models]{Exotic matrix models: the Albert algebra
and the spin factor}

\author{Paul E. Gunnells}
\address{Department of Mathematics and Statistics\\University of
Massachusetts\\Amherst, MA 01003-9305}
\email{gunnells@math.umass.edu}

\renewcommand{\setminus}{\smallsetminus}

\date{11 June 2017} 

\thanks{The author was partially supported by NSF grants DMS 1101640 and
1501832}

\keywords{Matrix models, octonions, Albert algebra, spin factor}

\subjclass[2010]{Primary 81T18, 16W10}

\begin{abstract}
The matrix models attached to real symmetric matrices and the
complex/quaternionic Hermitian matrices have been studied by many
authors.  These models correspond to three of the simple formally real
Jordan algebras over $\RR$.  Such algebras were classified by Jordan,
von Neumann, and Wigner in the 30s, and apart from these three there
are two others: (i) the spin factor $\LL=\LL_{1,n}$, an algebra built
on $\RR^{n+1}$, and (ii) the Albert algebra $\AA$ of $3\times 3$
Hermitian matrices over the octonions $\OO$.  In this paper we
investigate the matrix models attached to these remaining cases. 
\end{abstract}

\maketitle

\ifvc
\let\thefootnote\relax
\footnotetext{Base revision~\GITAbrHash, \GITAuthorDate,
\GITAuthorName.}
\fi

\section{Introduction}\label{s:intro}

\subsection{} Let $V = V_{\CC }$ be the real vector space of $n\times
n$ complex Hermitian matrices equipped with Lebesgue measure.  For
any polynomial function $f\colon V \rightarrow \RR $, define
\[
\ip{f}_{0} = \int_{V} f (X) \exp (-\Tr X^{2}/2)\,dX,
\]
where $\Tr (X) = \sum_{i} X_{ii}$ is the sum of diagonal entries, and
put 
\begin{equation}\label{eq:expectation}
\ip{f} = \ip{f}_{0}/\ip{1}_{0}.
\end{equation}
Let $k\geq 0$ be an integer, and consider \eqref{eq:expectation}
evaluated on the polynomial given by taking the trace of the $k$th power:
\begin{equation}\label{eq:basicint}
C_{\CC} (n,k) = \ip{\Tr X^{k}}.
\end{equation}
For $k$ odd \eqref{eq:basicint} clearly vanishes for all $n$.  On the
other hand, for $k$ even and $n$ fixed, it turns out that $C_{\CC}
(n,k)$ is an integer, and as a function of $n$ is a polynomial of
degree $(k+2)/2$ with integral coefficients.

Furthermore, the number $C_{\CC} (n,k)$ has the following remarkable
combinatorial interpretation.  Let $\Pi_{k}$ be a polygon with $k$
sides.  Any pairing $\pi$ of the sides of $\Pi_{k}$ determines a
topological surface $\Sigma (\pi)$ endowed with an embedded graph (the
images of the edges and vertices of $\Pi_{k}$).  Let $N (\pi)$ be the
number of vertices in this embedded graph.  Then we have
\begin{equation}\label{eq:complexsurfsum}
C_{\CC} (n,k) = \sum_{\pi} n^{N (\pi)},
\end{equation}
where the sum is taken over all oriented pairings of the edges of
$\Pi_{k}$ such that the resulting surface
$\Sigma_{\pi}$ is orientable.  For example, we have
\begin{equation*}
C_{\CC} (n,4) = 2n^{3}+n, \quad C_{\CC} (n,6) = 5n^{4}+10n^{2}, \quad C_{\CC}
(n,8) = 14n^{5}+70n^{3} + 21n. 
\end{equation*}
The pairings yielding $C_{\CC} (n,4)$ are shown in Figure \ref{fig:ccn4}.
For more information, see Harer--Zagier \cite{harer.zagier}, Etingof
\cite[\S 4]{etingof}, or Lando--Zvonkin \cite{lz}.

\begin{figure}[htb]
\psfrag{n3}{$n^{3}$}
\psfrag{n}{$n$}
\begin{center}
\includegraphics[scale=0.4]{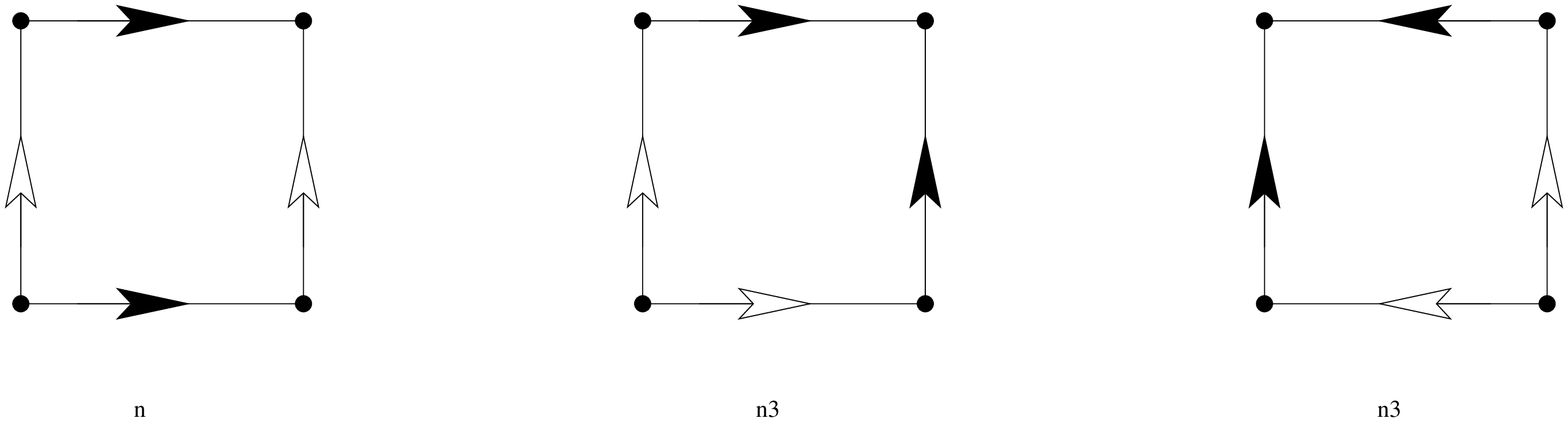}
\end{center}
\caption{Computing $C_\CC (n,4) = 2n^{3}+n$.}\label{fig:ccn4}
\end{figure}

\subsection{}
More generally, one can consider integrals over other spaces of
matrices, in particular, the space $V_{\RR}$ of $n\times n$ real
symmetric matrices, and the space $V_{\HH}$ of $n\times n$ quaternionic
Hermitian matrices.  The resulting matrix integrals were investigated
by Mulase--Waldron \cite{mulase.waldron}, who found explicit combinatorial
expressions for the analogues of \eqref{eq:basicint}.  In the real
symmetric case, they found 
\[
C_{\RR} (n,k) = 2^{-k/2}\sum_{\pi} n^{N (\pi)},
\]
where the sum is now taken over \emph{all} possible oriented pairings
of the edges of $\Pi_{k}$, regardless of whether the resulting surface
is orientable or not.  The quaternionic case is similar, except that
now \eqref{eq:complexsurfsum} takes the form
\[
C_{\HH} (n,k) = 2^{-k/2}\sum_{\pi} \alpha (\pi) n^{N(\pi)},
\]
where $\alpha (\pi) \in \{\pm 1 \}$ depends on the topology of $\Sigma_{\pi}$. 

\subsection{}\label{ss:assoctojord} The spaces of matrices $V_{\RR},
V_{\CC}, V_{\HH}$ have another interpretation that is less familiar:
they are examples of \emph{simple formally real Jordan algebras over $\RR$}
\cite{amrt, koecher, mccrimmon}.  Briefly, a \emph{Jordan algebra} over a field
$k$ is a nonassociative algebra over $k$ whose multiplication
satisfies $x\jordan y = y\jordan x$ and $ (x \jordan x) \jordan ( y
\jordan x ) = (( x \jordan x ) \jordan y) \jordan x $; it is
\emph{simple} if it cannot be written as a direct sum \dots Although
nonassociative, Jordan algebras are power-associative: if one puts
$x^{n} := x\jordan x^{n-1}$ for $n>1$, then $x^n$ can be computed as
$x\jordan \dotsb \jordan x$ with any choice of bracketing.

A Jordan algebra $A$ over $\RR$ is called \emph{formally real} if
$\sum_{i=1}^{n} x_{i}^{2} = 0$ implies each $x_{i}=0$.  It is known
that a real Jordan algebra being formally real is equivalent to it
having a \emph{positive definite trace form} $\Tr \colon A \rightarrow
\RR$.  This is a linear map satisfying $\Tr (x^{2}) > 0$ for all $x\in
A$, $x\not =0$, and one has in addition that the trace pairing $\Tr
(x\jordan y)$ is a positive definite quadratic form on $A$.

If the characteristic of $k$ is different from $2$, then any
associative algebra $A$ over $k$ can be turned into a Jordan algebra
by putting $x\jordan y = (xy+yx)/2$, where the multiplication on the
right is the usual multiplication in $A$. This is the Jordan structure
on the spaces $V_{\RR}$, $V_{\CC}$, $V_{\HH}$,
and the trace form $\Tr$ is of course the usual matrix trace. 

\subsection{}
Simple formally real Jordan algebras were classified by Jordan, von
Neumann, and Wigner in 1934 \cite{jnw}.  Apart from 
$V_{\RR}$, $V_\CC$, $V_{\HH}$, there are two others:
\begin{itemize}
\item The \emph{spin factor} $\LL = \LL_{1,n}$ of pairs $\x = (x_{0}, x) \in \RR \times
\RR^{n}$ equipped with the Jordan product $\x \jordan \y = (x_{0}y_{0}
+ x\cdot y, x_{0}y + y_{0}x)$, where $\cdot$ denotes the usual dot
product on $\RR^{n}$.
The trace form in this case is $\Tr (\x) = x_{0}$.
\item The \emph{Albert algebra} $\AA$ of $3\times 3$ Hermitian
matrices over the \emph{octonions} $\OO$, equipped with the same
Jordan product as $V_{\RR}$, $V_{\CC}$, $V_{\HH}$, and with the usual
trace as trace form.
\end{itemize}

\subsection{} Hence one has the natural problem of investigating the
``matrix models'' for the Jordan algebras $\LL$ and $\AA$, and of
understanding the underlying combinatorics.  In this paper we carry
this out.  In both cases we give a combinatorial method to compute the
expectations $\ip{\Tr X^{k}}_{\JJ}$, where $\JJ$ is one of the
algebras $\AA , \LL$, and where
\begin{equation}\label{eq:albertint}
\ip{\Tr X^{k}}_{\JJ} = \ip {\Tr X^{k}}_{0,\JJ}/\ip{1}_{0,\JJ }, \quad
\ip{f (X) }_{0,\JJ} = \int_{\JJ} f (X) \exp (-\Tr X^{2}/2)\,dX.
\end{equation}
The answers are quite different for the two algebras $\AA, \LL$,
although they do have some similarities with the classical cases.  For
the Albert algebra, the result (Theorem \ref{th:tracexk}) is given in
terms of contributions from (orientable and nonorientable) surfaces
glued together from polygons, as in the classical matrix algebra
cases.  For the spin factor, the result is given in terms of colored
one-manifolds glued together from closed intervals.

The next results describe how to compute the full perturbation series
attached to the trace monomials.  Let $t, g_{3}, g_{4}, g_{5}, \dotsc$
be formal parameters, and let $\QQ [g_{3},g_{4},\dotsc ]\fps{t}$ be
the ring of formal power series in $t$ whose coefficients are rational
polynomials in the $g_{k}$.  Then we compute
\begin{equation}\label{eq:perturb}
\ip{\exp \bigl(\sum_{k \geq 3}\Tr X^{k}g_{k}t^{k}\bigr)}_{\JJ} \in \QQ
[g_{3},g_{4},\dotsc ]\fps{t}
\end{equation}
in terms of assembling surfaces (respectively one-manifolds) from
polygons (resp.~intervals) of various sizes.  

The classical matrix algebras $V_{\RR}$, $V_{\CC}$, $V_\HH$ depend on
a parameter $n$.  This allows one to investigate the
expectations/perturbation series as a function of $n$.  This is also
true for the spin factor $\LL$, and accordingly we are able to
incorporate the parameter $n$ into our results.  The Albert algebra,
on the the other hand, is not part of an infinite family: there is no
general matrix model of $n\times n$ Hermitian matrices over $\OO$.
Indeed, it is exactly the failure of associativity for $\OO$ that
prevents such matrices for $n>3$ from having the structure of a Jordan
algebra.  For $n<3$, however, the matrix model does make sense: $n=1$
is just $V=\RR$, and $n=2$ is a special case of the spin factor $\LL$,
namely $\LL_{1,9}$.

Nevertheless, our results show how to express \eqref{eq:albertint} as
a \emph{polynomial} $C_{\OO} (n,k)$ in $n$ that for $n\leq 3$ agrees
with $\ip{\Tr X^{k}}$ evaluated over the appropriate space of
matrices.  Thus for $n\geq 4$ the combinatorial expansion allows us to
define the expectation $\ip{\Tr X^{k}}$, even though the algebraic
structure giving rise to it doesn't exist!  It would be interesting to
find an actual model computing these expectations for $n\geq 4$.

\subsection{} Here is a guide to the paper.  Part \ref{part:aa} treats
the Albert algebra $\AA $.  Section \ref{s:aa:background} gives the
basic definitions, including background on the octonions, and
discusses how the nonassociativity affects the trace computations.
Then in \S \ref{s:aa:traceint} we compute the expectation of the
traces of the powers in terms of gluings of polygons labelled by
octonions (Theorem \ref{th:tracexk}).  Our approach is a
generalization of that of Mulase--Waldron \cite{mulase.waldron},
although as one might expect, the nonassociativity of $\OO$ causes
some new wrinkles to appear.  The polynomials obtained by considering
the ``$n\times n$ Hermitian matrices over $\OO$'' are given in Table
\ref{tab:oct}. We end this part by explaining in \S \ref{s:aa:perturb}
how to compute the perturbation series (Theorem \ref{th:perturb}).
Next, part \ref{part:sf} gives a parallel treatment of the spin factor
$\LL$.  Background is recalled in \S \ref{s:sf:background}, the trace
integrals are computed in Theorem \ref{th:sf:expectation} in \S
\ref{s:sf:traceint}, and the perturbation series in Theorem
\ref{th:sf:perturb} in \S \ref{s:sf:perturb}.  One difference between
$\AA$ and $\LL$ is that for the latter we are able to give another
model that allows us to incorporate automorphisms, and to give a
simple generating function for the connected structures with their
automorphisms; this is done in \S \ref{s:sf:auts} in Theorem
\ref{th:secondmodel}.  

\subsection{Acknowledgments} We thank Ivan Mirkovic for the chance to
speak on the Harer--Zagier formula in his seminar, which sparked our
interest in matrix models and eventually led to this paper.  We thank
Daniel Briggs for helpful conversations.

\part{The Albert Algebra.}\label{part:aa}

\section{Background}\label{s:aa:background}

\subsection{} We begin with the octonions $\OO$.  As a real vector
space we have $\OO \simeq \RR^{8}$; for a basis we take elements $U =
\{e_{1},\dotsc ,e_{8}\}$ satisfying $e_{1}=1$ and $e_{i}^{2}=-1$ for
$i>1$.  For $i\not =j$ and $i,j>1$, we compute products $e_{i}e_{j}$
using the Fano mnemonic (Figure \ref{fig:fano}): $e_{i}e_{j}=\pm
e_{k}$ where $k$ is the third index on the line joining $i$ and $j$,
and where the sign is $+1$ (respectively, $-1$) if $i,j,k$ are in cyclic
order (respectively, out of cyclic order) with respect to the arrow on
the line through $i,j,k$.  For example, we have $e_{2}e_{3}=e_{4}$ and
$e_{2}e_{7}=-e_{8}$.  The unit $e_{1}$ is called \emph{real}, and
$e_{2},\dotsc ,e_{8}$ are called \emph{imaginary}.

We define the product $\alpha \beta$ of two general octonions $\alpha
= \sum_{i}a_{i}e_{i}$, $\beta = \sum_{i}b_{i}e_{i}$ using Figure
\ref{fig:fano} and linearity. The conjugate $\bar \alpha$ of $\alpha$
is defined by $\bar \alpha = a_{1}e_{1} - \sum_{i>1} a_{i}e_{i}$.  We
have two maps $\Tr , \Norm$ from $\OO$ to $\RR$ defined by
\[
\Tr \alpha = \alpha + \bar \alpha , \quad \Norm \alpha = \alpha\cdot 
\bar \alpha.
\]
These maps satisfy 
\[
\Tr (\alpha +\beta) = \Tr \alpha + \Tr \beta, \quad \Norm (\alpha
\beta) = \Norm (\alpha)\Norm (\beta).
\]

We remark that if $i,j,k$ lie on a line, then the $\RR$-subalgebra
spanned by $e_{1}$, $e_{i}$, $e_{j}$, $e_{k}$ is isomorphic to $\HH$.
In this case the triple product $e_{i}e_{j}e_{k}$ is associative.  If
$i,j,k$ do not lie on a line, then $e_{i}e_{j}e_{k}$ is not
associative, but it is \emph{alternative}: we have $(e_{i}e_{j})e_{k}
= - e_{i}(e_{j}e_{k})$.
\begin{figure}[htb]
\psfrag{2}{$\scriptstyle 2$}
\psfrag{3}{$\scriptstyle 3$}
\psfrag{4}{$\scriptstyle 4$}
\psfrag{5}{$\scriptstyle 5$}
\psfrag{6}{$\scriptstyle 6$}
\psfrag{7}{$\scriptstyle 7$}
\psfrag{8}{$\scriptstyle 8$}
\begin{center}
\includegraphics[scale=0.3]{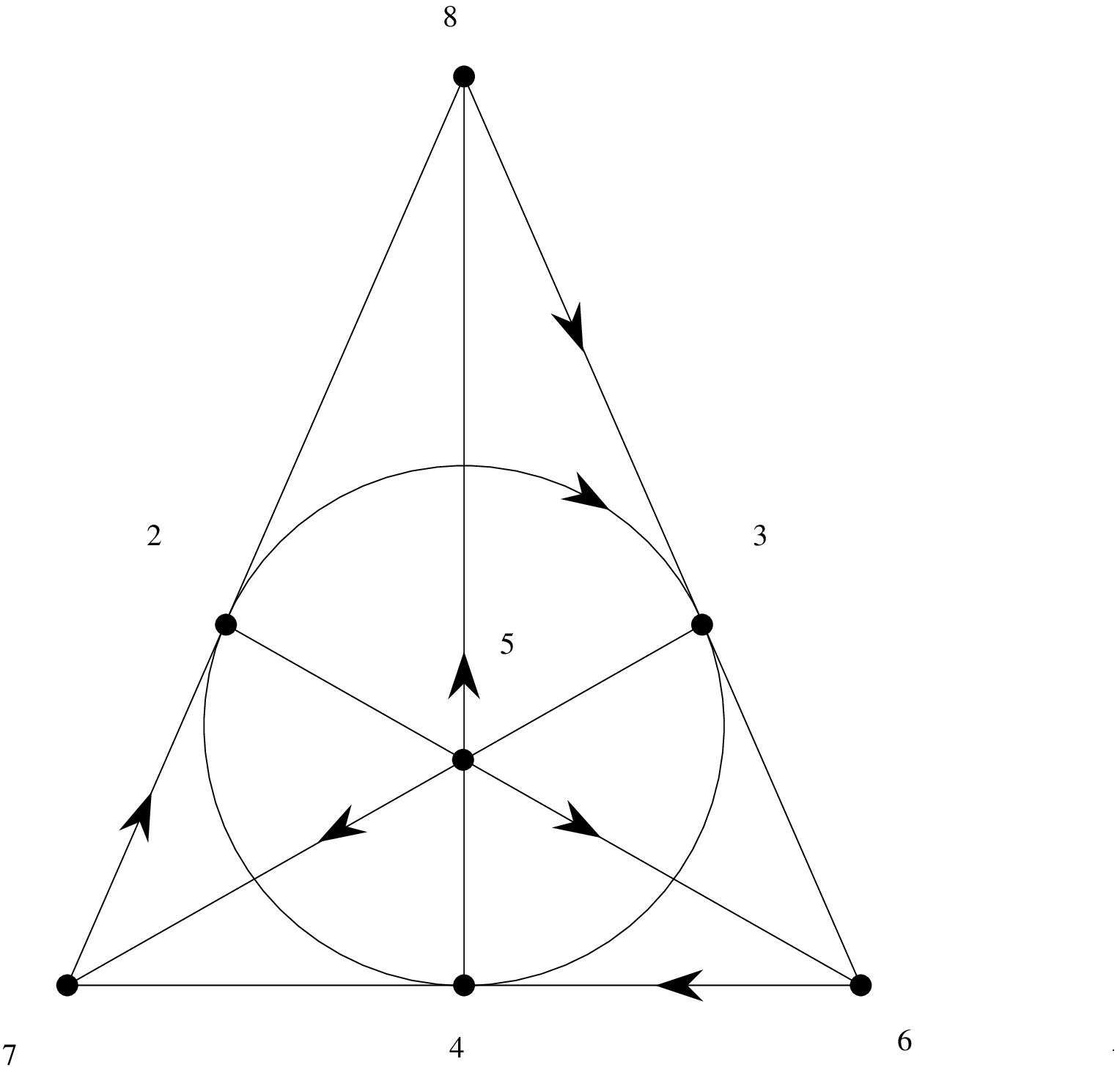}
\end{center}
\caption{The Fano plane mnemonic for multiplication in $\OO$.\label{fig:fano}}
\end{figure}

\subsection{} Now we define the \emph{Albert algebra} $\AA$.  As a
real vector space $\AA$ is the space of $3\times 3$ Hermitian matrices
over $\OO$:
\[
\AA = \Biggl\{\biggl(\begin{array}{ccc}
a_{1}&\alpha_{1}&\alpha_{2}\\
\bar \alpha_{1}&a_{2}&\alpha_{3}\\
\bar \alpha_{2}&\bar \alpha_{3}&a_{3}
\end{array}\biggr)\Biggm|a_{i} \in \RR , \alpha_{i}\in \OO\Biggr\}.
\]

As described above, $\AA$ becomes a formally real simple
Jordan algebra after we put 
\begin{equation}\label{eq:jord}
X\jordan Y = (XY+YX)/2,
\end{equation}
where the product on the right is the usual matrix product.  
We define powers $X^{k}$ by 
\begin{equation}\label{eq:powers}
X^{k} = \begin{cases}
X&\text{if $k=1$,}\\
X\jordan X^{k-1}&\text{if $k>1$}.
\end{cases}
\end{equation}

The powers $X^{k}$ are the first place where the nonassociativity of
$\OO$ has an effect.  As mentioned before, the spaces of Hermitian
matrices over $\RR$, $\CC$, $\HH$ can be turned into Jordan algebras
using \eqref{eq:jord}.  For them, the power $X^{k}$ is exactly $X
\dotsb X$ ($k$ factors), where the implied product is the usual
associative matrix product.  But since $\OO$ is not associative, the
expression $X \dotsb X$ is not well-defined.  Indeed, this expression
must be carefully bracketed using \eqref{eq:jord}  and \eqref{eq:powers} 
to guarantee that $\AA$ is power-associative (as all
Jordan algebras are).  Proposition \ref{prop:powers} below explains
how to evaluate $X^{k}$ using the ordinary matrix product.  Before we
can state it we need more notation.

\subsection{} It is well known that any bracketing of a word of length
$k$ in a nonassociative algebra can be encoded by a rooted binary tree
with $k$ leaves, and that both bracketings and such trees are counted
by the Catalan numbers.  In our case, not all bracketings arise when
computing $X^{k}$.  Let us call a rooted binary tree \emph{fully
nested} if one and only one vertex has two leaves.  We also call a
bracketing fully nested if the corresponding tree is.  In Figure
\ref{fig:bracketings}, for example, we see the five bracketings of
$XXXX$ with their corresponding rooted binary trees.  Only the first
four are fully nested.

\begin{prop}\label{prop:powers}
\leavevmode
\begin{enumerate}
\item Let $k\geq 3$.  There are $2^{k-2}$ fully nested bracketings of
a word of length $k$ in a nonassociative algebra.
\item In $\AA$, we have 
\[
X^{k} = \frac{1}{2^{k-2}}\sum_{P} P (X\cdots X),
\]
where the sum is taken over all fully nested bracketings $P$ of $X 
\dotsb X$ ($k$ factors).
\end{enumerate}
\end{prop}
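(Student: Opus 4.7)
The plan is to prove (i) by characterizing fully nested trees as ``caterpillars'' and counting left/right choices, and then to prove (ii) by induction on $k$, using the Jordan identity to reduce $X^k$ to $X\cdot X^{k-1}\pm X^{k-1}\cdot X$ (ordinary matrix product) and matching the resulting bracketings via a bijection with those of length $k-1$.

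\emph{For (i):} A fully nested tree with $k$ leaves has a unique internal vertex with two leaf children, call it the \emph{terminal} vertex. I would first show by induction on $k$ that at the root of any fully nested tree with $k\geq 3$ leaves, exactly one child is a leaf and the other is the root of a fully nested tree with $k-1$ leaves. The two children cannot both be leaves (that would force $k=2$), and they cannot both be internal: any binary tree with at least two leaves contains some internal vertex with two leaf children (take the deepest internal vertex), so two internal subtrees would contribute two distinct terminal vertices, contradicting fully nestedness. Thus fully nested trees are ``caterpillars,'' and at each of the $k-2$ vertices strictly above the terminal one, we make an independent choice of on which side the leaf sits; this gives $2^{k-2}$ trees.

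\emph{For (ii):} I induct on $k\geq 3$. The base case $k=3$ is direct: $X^2 = X\jordan X = X\cdot X$, so $X^3 = X\jordan X^2 = \tfrac{1}{2}(X(XX)+(XX)X)$, which matches the claimed formula since $(XX)X$ and $X(XX)$ are the two fully nested bracketings. For the inductive step, apply \eqref{eq:jord} and \eqref{eq:powers} to get
\[
X^k \;=\; X\jordan X^{k-1} \;=\; \tfrac{1}{2}\bigl(X\cdot X^{k-1} + X^{k-1}\cdot X\bigr),
\]
and substitute the inductive expression $X^{k-1}=2^{-(k-3)}\sum_{P'} P'(X\cdots X)$ to obtain
\[
X^k \;=\; \frac{1}{2^{k-2}}\sum_{P'}\bigl[X\cdot P'(X\cdots X) + P'(X\cdots X)\cdot X\bigr].
\]
By the structural analysis from (i), the map sending $(P',L)\mapsto X\cdot P'$ and $(P',R)\mapsto P'\cdot X$ is a bijection from the set of fully nested bracketings of length $k-1$ (paired with a side) onto the set of fully nested bracketings of length $k$, since every fully nested $P$ of length $k$ has a root decomposition of exactly one of these two shapes. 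Substituting rewrites the sum as $\sum_{P}P(X\cdots X)$ over fully nested $P$ of length $k$, as desired.

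\emph{Main obstacle:} The delicate point is the structural claim underlying both parts, namely that the root of a fully nested tree with $k\geq 3$ leaves has exactly one leaf child. The ``deepest internal vertex'' argument ruling out two internal subtrees is the real content; once it is in place, both the count in (i) and the bijection in (ii) follow with essentially no further work, and the inductive computation in (ii) reduces to bookkeeping with the Jordan identity. A small caveat worth verifying (but not genuinely obstructing the proof) is that the intermediate expressions $P'(X\cdots X)$ need not be Hermitian, yet the symmetrized combination $X\cdot P' + P'\cdot X$ always is, consistent with $X^k\in\AA$.
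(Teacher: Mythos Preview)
Your proof is correct and follows essentially the same approach as the paper. The paper's argument for (i) is precisely your caterpillar characterization, phrased as encoding each fully nested tree by a word of length $k-2$ over $\{L,R\}$ recording which child is the non-leaf as one reads down from the root; for (ii) the paper simply says it is a straightforward application of \eqref{eq:powers} and \eqref{eq:jord}, which is exactly your induction. Your ``deepest internal vertex'' argument spelling out why the root must have a leaf child is a detail the paper leaves implicit.
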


\begin{proof}
Any fully nested rooted binary tree can be encoded by a word of length
$k-2$ in the symbols $L,R$: one reads down from the root and writes
down the sequence of children that are non-leaves.  This proves (i).
The proof of (ii) is a straightforward application of the definition
\eqref{eq:powers} of $X^{k}$ and the Jordan product \eqref{eq:jord}.
\end{proof}

For later use, we introduce some notation.  Given any word $w$ of
length $k$ representing a product of elements in an algebra, we define
\[
[w]_{\fn} = \frac{1}{2^{k-2}} \sum_{P} P (w),
\]
where the sum is taken over all fully nested bracketings of $w$.

\begin{figure}[htb]
\psfrag{x1}{$((XX)X)X$}
\psfrag{x2}{$(X (XX))X$}
\psfrag{x3}{$X ((XX)X)$}
\psfrag{x4}{$X (X (XX))$}
\psfrag{x5}{$(XX) (XX)$}
\psfrag{ll}{$LL$}
\psfrag{rl}{$RL$}
\psfrag{lr}{$LR$}
\psfrag{rr}{$RR$}
\begin{center}
\includegraphics[scale=0.5]{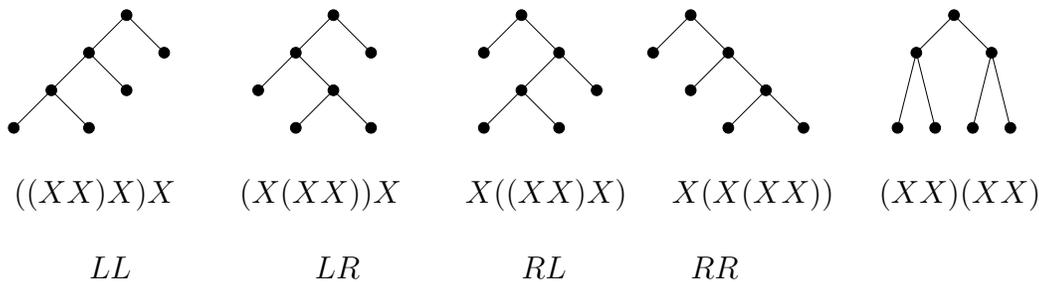}
\end{center}
\caption{The five bracketings of $XXXX$ and the corresponding rooted
binary trees.  The first four expressions are fully nested, and the
encodings of their trees in terms of words over $\{L,R \}$ are shown
below.\label{fig:bracketings}}
\end{figure}

\subsection{}
We conclude by stating Wick's theorem, which is the fundamental
combinatorial result used in evaluating Gaussian integrals.  Let $f
(x)$ be a polynomial function on $\RR$ and let $dx$ be the usual
Lebesgue measure on $\RR $.  Define
\begin{equation}\label{eq:realpairing}
\ip{f}_{0,\RR } = \int_{\RR} f (x) \exp (-x^{2}/2)\,dx, \quad
\ip{f}_{\RR} =  \ip{f}_{0,\RR}/\ip{1}_{0,\RR }.
\end{equation}
Recall that a \emph{pairing} on a finite set $S=\{1,\dotsc ,n \}$ is a
partition of $S$ into nonintersecting subsets of order $2$.  If $n$ is
even, there are $w (n) = (n-1)!! = (n-1) (n-3)\dotsb 1$ pairings of $S$.

\begin{thm}\label{th:wick}
If $k$ is odd, then $\ip{x^{k}}_{\RR }$ vanishes.  If $k$ is even,
then $\ip{x^{k}}_{\RR } = w (k)$, the number of pairings of a set of
order $k$.
\end{thm}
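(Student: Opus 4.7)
The plan is to handle the two parities of $k$ separately, using the standard symmetry and integration-by-parts trick to reduce moments of even order to a simple recursion.

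For odd $k$, I would simply observe that the integrand $x^k \exp(-x^2/2)$ in $\ip{x^k}_{0,\RR}$ is an odd function on $\RR$, so the integral vanishes (and hence so does the normalized expectation). The normalizing constant $\ip{1}_{0,\RR} = \sqrt{2\pi}$ is nonzero, so this causes no trouble.

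For even $k$, the main step is to derive the recursion $\ip{x^k}_{\RR} = (k-1)\ip{x^{k-2}}_{\RR}$. To do this, I would write
\[
x^{k}\exp(-x^{2}/2) = -x^{k-1}\frac{d}{dx}\exp(-x^{2}/2),
\]
integrate by parts over $\RR$, and note that the boundary term vanishes because of the Gaussian decay. What remains is
\[
\int_{\RR} x^{k}\exp(-x^{2}/2)\,dx = (k-1)\int_{\RR} x^{k-2}\exp(-x^{2}/2)\,dx,
\]
and dividing through by $\ip{1}_{0,\RR}$ gives the claimed recursion. Iterating this from $k$ down to the base case $\ip{x^{0}}_{\RR} = 1$ produces
\[
\ip{x^{k}}_{\RR} = (k-1)(k-3)\cdots 3\cdot 1 = (k-1)!! .
\]

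The final step is the purely combinatorial observation that $(k-1)!!$ equals $w(k)$, the number of pairings of a $k$-element set $S$. This is immediate by a choose-a-partner induction: element $1 \in S$ has $k-1$ possible partners, and after pairing it off we are left with $k-2$ elements and, by induction, $(k-3)!!$ pairings of those. There is no real obstacle in this argument; the only thing to be careful about is the justification of the vanishing of the boundary term in integration by parts, which follows from the super-polynomial decay of $\exp(-x^{2}/2)$ at $\pm\infty$.
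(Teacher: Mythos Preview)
Your argument is correct and is the standard proof of this elementary Gaussian moment computation. The paper itself does not supply a proof of Theorem~\ref{th:wick} at all---it simply cites \cite[\S 3.1]{etingof} and \cite{lz}---so there is nothing further to compare; your integration-by-parts recursion is precisely the argument one finds in those references.
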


The proof of Theorem \ref{th:wick} can be found in many places, for
example \cite[\S 3.1]{etingof} and \cite{lz}.  One learns from the
proof that the value $(k-1)!!$ actually arises combinatorially: it
should be thought of as counting all pairings of the different $x$'s
in the expression $x\dotsb x$ ($k$ factors).  Thus, for example, we
have $\ip{x^{4}}_{\RR}=3$.  If we use subscripts to distinguish the
positions and write $xxxx$ as $x_{1}x_{2}x_{3}x_{4}$, then the $3$
arises from the three pairings
\[
(x_{1}x_{2}) (x_{3}x_{4}),\quad (x_{1}x_{3}) (x_{2}x_{4}), \quad (x_{1}x_{4}) (x_{2}x_{3}).
\]

The numbers $w (k)$ are called the \emph{Wick numbers}.

\section{Computation of the basic trace integral}\label{s:aa:traceint}

\subsection{} We begin by recalling some notation from Mulase--Waldron
\cite{mulase.waldron}, adapted to our case.  Any variable $n\times n$
matrix $X$ over $\OO$ can be written as
\begin{equation}\label{eq:Aexp}
X = \sum_{i=1}^{8} A^{i}e_{i},
\end{equation}
where the $A^{i}$ are $n\times n$ matrices of real variables.
Furthermore, $X$ is Hermitian if and only if $A^{1}$ is symmetric
($A^{1} = (A^{1})^{t}$) and $A^{i}$ is antisymmetric for $i>1$ ($A^{i}
=- (A^{i})^{t}$).  

To evaluate $\ipA{\Tr X^{k}}$, we must first compute the real
polynomial $\Tr X^{k}$.  This is accomplished by Proposition
\ref{prop:trpol} below.  Before stating the result, let $R_{k} \subset
\{1,\dotsc ,8 \}^{k}$ be the set of all tuples $(i_{1},\dotsc ,i_{k})$
such that the product of octonion units $e_{i_{1}}\dotsb e_{i_{k}}$
with respect to some fixed bracketing is \emph{real}.  We remark that
this property is independent of whatever bracketing of this product is
used to evaluate it, although the actual value, which must be $\pm 1$,
depends on the bracketing.  To see this, consider any product of
octonion units $e_{i_{1}}\dotsb e_{i_{k}}$, whether real-valued or
not.  After choosing a bracketing and evaluating, one obtains $\pm
e_{i}$ for some $i=1,\dotsc ,8$.  Any two bracketings are related
through a sequence of replacements of the form $e_{a}
(e_{b}e_{c})\mapsto (e_{a}e_{b})e_{c}$ applied to three consecutive
factors.  If the indices $a,b,c$ lie on a line or at least one equals
$1$, then $e_{a} (e_{b}e_{c})= (e_{a}e_{b})e_{c}$.  Otherwise,
alternativity implies $e_{a} (e_{b}e_{c})= -(e_{a}e_{b})e_{c}$.  This
implies that the unbracketed product $e_{i_{1}}\dotsb e_{i_{k}}$ is
well-defined up to sign, and in particular requiring it be real-valued
makes sense.

\begin{prop}\label{prop:trpol}
Let $n\leq 3$ and let $X$ be as in \eqref{eq:Aexp}.  Then we have 
\begin{equation}\label{eq:trpol}
\Tr X^{k} = \sum_{\substack{1\leq j_{1},\dotsc ,j_{k}\leq n,\\
(i_{1},\dotsc ,i_{k})\in R_{k}}}
A_{j_{1}j_{2}}^{i_{1}}A_{j_{2}j_{3}}^{i_{2}}\dotsb
A_{j_{k}j_{1}}^{i_{k}} [e_{i_{1}}\dotsb e_{i_{k}}]_{\fn}.
\end{equation}
\end{prop}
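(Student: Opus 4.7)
The plan is to apply Proposition \ref{prop:powers} and then expand each $X$ as $X=\sum_i A^i e_i$, tracking how the matrix entries and the octonion units interact under a fully nested bracketing. By Proposition \ref{prop:powers}(ii),
\[
\Tr X^k = \frac{1}{2^{k-2}} \sum_P \Tr P(X\cdots X),
\]
where $P$ ranges over fully nested bracketings of a word of length $k$, and $P(X\cdots X)$ denotes the ordinary (nonassociative) matrix product over $\OO$.

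The heart of the argument is the identity
\[
P\bigl((A^{i_1}e_{i_1})\cdots(A^{i_k}e_{i_k})\bigr) = (A^{i_1}A^{i_2}\cdots A^{i_k}) \cdot P(e_{i_1}\cdots e_{i_k}),
\]
which asserts that under any fully nested bracketing the real matrices combine into a single \emph{associative} product $A^{i_1}\cdots A^{i_k}$, while the octonion bracketing is absorbed into the common scalar factor $P(e_{i_1}\cdots e_{i_k})$. I would prove this by induction on the depth of $P$: writing $P=P_L\cdot P_R$ and applying the inductive hypothesis to each half produces matrices of the form $B\cdot\omega_L$ and $C\cdot\omega_R$, each a real-scalar matrix times a fixed octonion. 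Because $\RR$ lies in the nucleus of $\OO$, one has $(B_{lp}\omega_L)(C_{pm}\omega_R) = B_{lp}C_{pm}(\omega_L\omega_R)$ for each $p$, and summing over $p$ completes the step.

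Taking matrix traces, summing over $P$, and using the definition of $[\cdot]_{\fn}$ then produces
\[
\Tr X^k = \sum_{i_1,\dotsc,i_k=1}^{8}\;\sum_{j_1,\dotsc,j_k=1}^{n} A^{i_1}_{j_1j_2}A^{i_2}_{j_2j_3}\cdots A^{i_k}_{j_kj_1}\,[e_{i_1}\cdots e_{i_k}]_{\fn}.
\]
To restrict the outer sum to $R_k$, I invoke the remark preceding the proposition: the product $e_{i_1}\cdots e_{i_k}$ is well-defined up to sign, so whether its value lies in $\RR e_1$ or in some $\RR e_m$ with $m>1$ is independent of bracketing. Hence $[e_{i_1}\cdots e_{i_k}]_{\fn}$ is a real number when $(i_1,\dotsc,i_k)\in R_k$ and purely imaginary (or zero) otherwise. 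Since $X^k\in\AA$ is Hermitian for $n\leq 3$, the matrix trace $\Tr X^k$ is real; comparing coefficients of $e_1$ on both sides eliminates the non-$R_k$ terms and yields the claimed formula.

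The main obstacle is the inductive structural identity separating the matrix and octonion contributions. Although elementary, it is not \emph{a priori} clear that this separation should persist when the underlying matrix product over $\OO$ is nonassociative; the crucial input is the lucky fact that $\RR$ is contained in the nucleus of $\OO$, so that the real scalar entries can be freely commuted and re-associated past the octonion units as one unwinds the bracketing.
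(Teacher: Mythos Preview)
Your proof is correct and follows essentially the same route as the paper: expand $X^{k}$ via Proposition~\ref{prop:powers}, separate the real matrix product from the octonion units, and then restrict to $R_{k}$ using the fact that any bracketing of $e_{i_{1}}\cdots e_{i_{k}}$ is well-defined up to sign. The one place where you are more careful than the paper is the inductive separation lemma $P\bigl((A^{i_{1}}e_{i_{1}})\cdots(A^{i_{k}}e_{i_{k}})\bigr)=(A^{i_{1}}\cdots A^{i_{k}})\,P(e_{i_{1}}\cdots e_{i_{k}})$; the paper simply asserts the resulting expansion \eqref{eq:trx} by appealing to the associative trace formula and Proposition~\ref{prop:powers}, whereas you justify it explicitly using that $\RR$ lies in the nucleus of $\OO$.
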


\begin{proof}
Let $M$ be any $n\times n$ matrix over an associative algebra.  Then
it is well known that
\begin{equation}\label{eq:trm}
\Tr M^{k} = \sum_{1\leq j_{1},\dotsc ,j_{k}\leq n}
M_{j_{1}j_{2}}M_{j_{2}j_{3}}\dotsb M_{j_{k}j_{1}}.
\end{equation}
For the convenience of the reader, we recall the proof. Let $G$ be the
directed graph on $n$ vertices with directed edges from each vertex to
another and with a loop on each vertex.  Then $M$ can be interpreted
as the weighted adjacency matrix of $G$, where the edge from vertex
$i$ to $j$ is labelled with $M_{ij}$.  The entries of $M^{k}$
correspond to $k$-step walks on $G$: $(M^{k})_{ij}$ is the sum over
all $k$-step walks from $i$ to $j$ of the product of the weights along
each walk.  The result \eqref{eq:trm} follows from the observation
that a walk contributes to $\Tr M^{k}$ if and only if it starts and
stops at the same vertex.

Now we apply \eqref{eq:trm} to $X=\sum A^{i}e_{i}$.  If we expand the
expression for $\Tr X^{k}$ using Proposition \ref{prop:powers}, we
obtain
\begin{equation}\label{eq:trx}
\Tr X^{k} = \sum_{\substack{1\leq j_{1},\dotsc ,j_{k}\leq n}}
A_{j_{1}j_{2}}^{i_{1}}A_{j_{2}j_{3}}^{i_{2}}\dotsb
A_{j_{k}j_{1}}^{i_{k}} [e_{i_{1}}\dotsb e_{i_{k}}]_{\fn}.
\end{equation}
We claim in \eqref{eq:trx} we only need to consider tuples
$(i_{1},\dotsc ,i_{k})$ that are in $R_{k}$.  Certainly if
$(i_{1},\dotsc ,i_{k})\in R_{k}$, then $[e_{i_{1}}\dotsb
e_{i_{k}}]_{\fn}$ is real.  On the other hand, if $(i_{1},\dotsc
,i_{k})\not \in R_{k}$, then it can happen that $[e_{i_{1}}\dotsb
e_{i_{k}}]_{\fn}$ is real even if the individual terms in the
conmputation of $[\phantom{a}]_{\fn}$ are not real.  However, the
discussion immediately before the statement of Proposition
\ref{prop:trpol} shows that this is possible only if $[e_{i_{1}}\dotsb
e_{i_{k}}]_{\fn}$ vanishes.  This completes the proof.
\end{proof}

\subsection{} Let $\Pi_{k}$ be a polygon with $k$ sides.  Suppose for
now that $k$ is even.  We fix an embedding of $\Pi_{k}$ in the plane
and distinguish one vertex with a star.  We write $\Pi^{*}_{k}$ to
indicate that this has been done.

Let $\sE = \sE (\Pi_{k}) = \{E_{1},\dotsc ,E_{k} \}$ be the edges of
$\Pi^{*}_{k}$, numbered counterclockwise around $\Pi^{*}_{k}$ with the
first and last edges adjacent to the distinguished vertex.  By an
\emph{oriented gluing} $\pi$ of $\Pi^{*}_{k}$ we mean a choice of
orientation for each $E\in \sE$ together with a pairing on $\sE$.  An
oriented gluing determines a topological surface $\Sigma (\pi)$ by
gluing each edge to its pair consistent with the orientations.  We say
that two edges $E,E'$ are glued \emph{without a twist} if their
orientations are \emph{opposite} as we move from the first to the
second around the boundary of $\Pi^{*}_{k}$.  Otherwise we say they
are glued \emph{with a twist}. For example, in Figure \ref{fig:ccn4}
all pairs of edges are glued without a twist, and in Figure
\ref{fig:bigonexample} the gluing on the right is done with a twist.
Let $N (\pi)$ be the number of equivalence classes of the vertices
under the gluing.

Suppose $\Pi^{*}_{k}$ has been equipped with an oriented gluing $\pi$.  We
say that a function $f \colon \sE \rightarrow U$ from the edges to the
units $\{e_{1},\dotsc ,e_{8} \}$ is \emph{compatible} with $\pi$ if
the following hold:
\begin{enumerate}
\item The product $\prod_{E\in \sE } f (E)$ is real-valued.
\item If $E, E'$ are identified by $\pi$, then $f (E)
= f (E')$.
\end{enumerate}

\begin{defn}\label{def:evalgluing}
Let $\pi$ be an oriented gluing of $\Pi^{*}_{k}$, with $k$ even, and
let $f\colon \sE \rightarrow U$ be compatible with $\pi$.  Then we define
the \emph{value} $\Omega (\pi , f) $ of the pair $(\pi ,f)$ to be 
\begin{equation}\label{eq:omega}
\Omega (\pi , f) = \alpha (\pi ,f) [f (E_{1})\dotsb f (E_{k})]_\fn  \in \QQ,
\end{equation}
where the sign $\alpha (\pi , f) \in \{\pm 1 \}$ is defined by the
following rules:
\begin{enumerate}
\item $\alpha (\pi ,f)$ is computed by taking a product of signs
$\alpha (E,E')$ over all edge
pairs $E$, $E' = \pi (E)$.
\item If $f (E) = f (E')$ is $e_{1}$, then $\alpha (E,E') = 1$.
\item If $f (E) = f (E')$ is imaginary, then $\alpha (E,E') = 1$ if
$E$ is glued to $E'$ \emph{without} a twist.
\item If $f (E) = f (E')$ is imaginary, then $\alpha (E,E') = -1$ if
$E$ is glued to $E'$ \emph{with} a twist.
\end{enumerate}

\end{defn}

\begin{ex}\label{ex:hexagon.one}
We give an example of computing $\Omega (\pi ,f)$ for $k=6$.  Suppose
$\pi$ is as in Figure \ref{fig:hexagon}.  Suppose that $f$ satisfies
$f (E_{1}) = f (E_{3}) = e_{4}$, $f (E_{2})=f (E_{4}) = e_{5}$, and $f
(E_{5})=f (E_{6}) = e_{6}$.  We have $\alpha (E_{1},E_{3}) = \alpha
(E_{5},E_{6})=-1$ and $\alpha (E_{2},E_{4})=1$.  There are $16$ fully
nested bracketings for the expression
$e_{4}e_{5}e_{4}e_{5}e_{6}e_{6}$.  The contribution $\Omega (\pi ,f)$
is $-5/8$.  This example also shows that $\Omega (\pi ,f)$ can be
nonintegral, thanks to the averaging in \eqref{eq:omega}.
\end{ex}

\subsection{}
We are now ready to state our first main result:
\begin{thm}\label{th:tracexk}
Let $k\geq 2$ be even and let $\Tr \colon \AA \rightarrow \RR$ be the
trace.  Let $X$ be a $3\times 3$ Hermitian matrix of variables as in
\eqref{eq:Aexp}. Then we have
\begin{equation}\label{eq:th1}
\ipA{\Tr X^{k}} = 2^{-k/2}\sum_{\pi} \sum_{f} \Omega (\pi ,f) 3^{N (\pi)}, 
\end{equation}
where the first sum is taken over all oriented gluings  of the edges of
$\Pi_{k}^{*}$, and the second sum is taken over all functions $f\colon \sE
\rightarrow U$ compatible with $\pi $.
\end{thm}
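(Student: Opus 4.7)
I would expand $\Tr X^k$ as a polynomial in the independent real coordinates of $\AA$ using Proposition \ref{prop:trpol}, and then evaluate the Gaussian integral by Wick's theorem. Since $A^1$ is symmetric and each $A^a$ with $a>1$ is antisymmetric, a direct expansion yields $\Tr X^2 = \sum_{j,k,a}(A^a_{jk})^2$, so the independent real coordinates are independent Gaussians. Reading off variances and extending by the (anti)symmetry of $A^a$ gives the uniform Wick contraction
\begin{equation*}
\langle A^a_{jk}\, A^b_{lm}\rangle = \tfrac{1}{2}\,\delta_{ab}\bigl(\delta_{jl}\delta_{km} + \epsilon_a\,\delta_{jm}\delta_{kl}\bigr),
\end{equation*}
where $\epsilon_1 = +1$ and $\epsilon_a = -1$ for $a > 1$.

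Next, I would apply Wick's theorem to each monomial $\prod_{\ell=1}^{k} A^{i_\ell}_{j_\ell j_{\ell+1}}$ appearing in the expansion of $\Tr X^k$. Wick expresses the expectation as a sum over pairings of the $k$ factors, which I identify with pairings of the edges $\sE$ of $\Pi^*_k$, with edge $E_\ell$ carrying the $\ell$-th factor. The $\delta_{ab}$ in each contraction forces paired edges to share the same octonion label, which is precisely compatibility condition (ii) for a function $f\colon\sE\to U$; condition (i) (reality of $\prod f(E)$) is built in by the restriction $(i_1,\dots,i_k)\in R_k$ from Proposition \ref{prop:trpol}. The two Kronecker-delta terms $\delta_{jl}\delta_{km}$ and $\delta_{jm}\delta_{kl}$ in each Wick contraction correspond to the two possible twist statuses of the paired edges in an oriented gluing, matching the orientation data in Definition \ref{def:evalgluing}.

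To assemble the formula, I would combine signs and count free matrix indices. Each Wick contraction contributes sign $+1$ or $\epsilon_a$ depending on which delta term is chosen; the product of these per-pair signs over all $k/2$ pairs, multiplied by the global factor $[e_{i_1}\cdots e_{i_k}]_\fn = [f(E_1)\cdots f(E_k)]_\fn$ carried over from Proposition \ref{prop:trpol}, reproduces $\Omega(\pi, f) = \alpha(\pi, f)\cdot[f(E_1)\cdots f(E_k)]_\fn$ of Definition \ref{def:evalgluing}. The $k/2$ factors of $\tfrac{1}{2}$ from the Wick contractions combine into the overall prefactor $2^{-k/2}$. Finally, summing the remaining matrix indices $j_1,\dots,j_k$ subject to the Kronecker-delta identifications produces one free sum over $\{1,2,3\}$ per equivalence class of polygon vertices under the gluing, which yields $3^{N(\pi)}$.

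The main technical obstacle is the sign and twist bookkeeping in the last step: verifying case-by-case (real versus imaginary labels, each twist status) that the product of Wick signs over pairs, when multiplied by $[f(E_1)\cdots f(E_k)]_\fn$, exactly reproduces $\alpha(\pi, f)\cdot[f(E_1)\cdots f(E_k)]_\fn$ as defined in Definition \ref{def:evalgluing}, and that the aligned and anti-aligned Wick terms correspond correctly to the ``with a twist'' and ``without a twist'' conventions of the paper. Once this matching is pinned down, the identification of Wick pairings with oriented gluings is a direct translation, and the remaining index sum produces the stated polynomial in the matrix size $n=3$.
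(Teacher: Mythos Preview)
Your proposal is correct and follows essentially the same route as the paper's own proof: expand $\Tr X^k$ via Proposition~\ref{prop:trpol}, apply Wick's theorem, interpret each Wick pairing together with a choice of Kronecker-delta branch as an oriented gluing of $\Pi_k^*$, match the (anti)symmetry signs to $\alpha(\pi,f)$, and count free vertex-index classes to get $3^{N(\pi)}$. Your explicit propagator $\langle A^a_{jk} A^b_{lm}\rangle = \tfrac12\delta_{ab}(\delta_{jl}\delta_{km}+\epsilon_a\delta_{jm}\delta_{kl})$ and the resulting derivation of the prefactor $2^{-k/2}$ from the $k/2$ factors of $\tfrac12$ is a slightly cleaner packaging than the paper's ``overcounting'' explanation, but the underlying argument is the same; the case-by-case sign check you flag as the remaining obstacle is exactly what the paper carries out in the middle of its proof.
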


\begin{proof}
We use the expression \eqref{eq:trpol} for $\ipA{\Tr X^{k}}$ together
with Wick's theorem (Theorem \ref{th:wick}).  A term in
\eqref{eq:trpol} can contribute to $\ipA{\Tr X^{k}}$ if and only if
there is a complete pairing of the variables $A^{i}_{jj'}$, and if the
product $e_{i_{1}}\dotsb e_{i_{k}}$ is real.  Following
\cite{harer.zagier,mulase.waldron}, any such pairing can be visualized
by labelling the polygon $\Pi_k^{*}$.  We assign the vertices the
labels $j_{1},\dotsc ,j_{k}$, starting with the distinguished vertex
and proceeding clockwise.  The edge between $j_{r}$ and $j_{r+1}$ then
corresponds to the variable $A^{i_{r}}_{j_{r}j_{r+1}}$.  If two
variables are to be paired in Theorem
\ref{th:wick}, we can represent this by gluing the corresponding edges
together in $\Pi_{k}^{*}$.    

Thus fix a monomial in \eqref{eq:trpol}, and let $A^{i}_{ab}$,
$A^{i'}_{cd}$ be two of the variables that are to be paired (Figure
\ref{fig:twoedges}).  For these variables to be equal, we must have
$i=i'$, along with some other constraints.

First, if $i=i'=1$, then $A^{1}$ is symmetric.  The variables can be
equal if either (i) $a=c$ and $b=d$ or (ii) $a=d$ and $b=c$.  Case (i)
corresponds to gluing with a twist, whereas (ii) corresponds to gluing
without a twist.  In both cases, the symmetry of $A^{1}$ does not
affect the sign of this monomial.  

On the other hand, if $i=i'>1$, then $A^{i}$ is antisymmetric.  Again
the variables can be equal if either (i) $a=c$ and $b=d$ or (ii) $a=d$
and $b=c$, and again these represent gluing with and without a twist
respectively.  As before case (ii) introduces no extra sign, but case
(i) does.  For these pairings, we have the same imaginary unit
attached to each edge.  These units square to $-1$, which have the
effect of flipping the signs in (i), (ii). Thus these two possiblities
exactly correspond to the computation of $\alpha (\pi ,f)$ in
Definition \ref{def:evalgluing}.

After a pairing $\pi$ has been chosen on the perimeter of
$\Pi_{k}^{*}$, we must compute how many different monomials the
pairing can contribute to.  This is done by (i) summing over all
assignments of units to edges compatible with $\pi$, and (ii) varying
the subscripts $j_{1},\dotsc ,j_{k}$ over all possibilities.  The
first is handled by summing over all $f$ compatible with $\pi$.  For
the second, after applying $\pi$ one finds that certain subscripts
$j_{l}$ must be equal, but apart from that one can allow them to range
over any of the three possibilities $1,2,3$ (since $\AA$ consists of $3\times
3$ matrices over $\OO$).  The number of different classes of
subscripts is the same as $N (\pi )$, which is the factor appearing in
\eqref{eq:th1}.

Finally we must account for the factor $2^{-k/2}$.  This arises
because in our correspondence between gluings of $\Pi_{k}^{*}$ and
monomials giving a nontrivial pairing we are overcounting.  Indeed,
the variables $A^{i}_{ab}$ and $A^{i'}_{cd}$ identified by the gluing
$\pi$ are the same, regardless of whether we glue with a twist or not.
This means we see each pair of variables twice, once for gluing with a
twist and once for gluing without.  Since there are $k/2$ pairs of
edges to be glued, we must divide by $2^{k/2}$.  This completes the
proof.
\end{proof}

\begin{figure}[htb]
\psfrag{a1}{$A^{i}_{ab}$}
\psfrag{a2}{$A^{i'}_{cd}$}
\begin{center}
\includegraphics[scale=0.4]{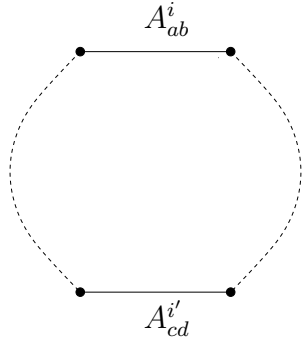}
\end{center}
\caption{Two edges in $\Pi_{k}^{*}$\label{fig:twoedges}}
\end{figure}
\begin{ex}\label{ex:bigon}
Consider computing $\ipA{\Tr X^{2}}$.  There are two gluings $\pi_{1},
\pi_{2}$ of the bigon $\Pi_{2}^{*}$, shown in Figure
\ref{fig:bigonexample}.  We have $N (\pi_{1}) = 2$ and $N (\pi_{2}) =
1$.  We have $8$ possibilities for labelling the edges $E,E'$ in each.
In $\pi_{1}$, the edges are glued without a twist, so all signs
$\alpha (E,E')$ are positive.  Thus $\Omega (\pi _{1}, f) = 1$ for any
of the $8$ possible $f$, and we obtain $8\cdot 3^{2} = 72$ for this
gluing.  In $\pi_{2}$, the edges are glued with a twist.  This means
that when $f (E) = e_{1}$ is real, we have $\alpha (E,E') =1$, and
when $f (E)$ is imaginary, we have $\alpha (E,E') = -1$.  Summing over
all $f$ gives $-6$, and we get a contribution of $-6\cdot 3 = -18$
from this gluing.  The final result is $\ipA{\Tr X^{2}} = \frac{1}{2}
(8\cdot 3^{2}-6\cdot 3) = 27$.  This can be checked directly.  We have
\begin{equation}\label{eq:tr2example}
\Tr X^{2} = \sum_{j=1}^{3}(A_{jj}^{1})^{2} + \sum_{\substack{1\leq i\leq 8\\
1\leq j<j'\leq 3}} 2(A_{jj'}^{i})^{2}.
\end{equation}
Applying Theorem \ref{th:wick}, we find $\ipA{(A_{jj}^{1})^{2}} = 1$ and
$\ipA{(A_{jj'}^{i})^{2}} = 1/2$, which with \eqref{eq:tr2example} yields
$27$.
\end{ex}

\begin{ex}\label{ex:square}
Figure \ref{fig:squareexample} shows the $12$ oriented gluings of
$\Pi_{4}^{*}$ with their contributions.  The result is $2^{-2}
(128\cdot 3^3 - 240\cdot 3^2 + 124\cdot 3) = 417$.  Note that in this
example, as in Example \ref{ex:bigon}, the products of the $e_{i}$ are
all associative.  This follows since there are only at most two
different units appearing in any product, so each expression is being
computed in a subalgebra isomorphic to $\HH$.  Thus there is no need
to compute the fully nested bracketings.
\end{ex}

\begin{ex}\label{ex:hexagonexample}
The evaluation of $\ipA{\Tr X^{6}}$ uses gluings of the hexagon
$\Pi_{6}^{*}$.  There are $15$ possible ways to pair the edges of
$\Pi_{6}^{*}$, and each pairing has $8$ different twisting patterns.
For each pairing with twists there are $512$ possible assignment of
units to the edges, and for each assignment there are $16$ fully
nested bracketings to compute.  After evaluating $983040$ terms the
final answer is $\ipA{\Tr X^{6}} = 7533$.
\end{ex}

\begin{figure}[htb]
\psfrag{st}{$*$}
\psfrag{n3}{$3^{3}$}
\psfrag{n2}{$8\cdot 3^{2}$}
\psfrag{n}{$-6\cdot 3$}
\begin{center}
\includegraphics[scale=0.4]{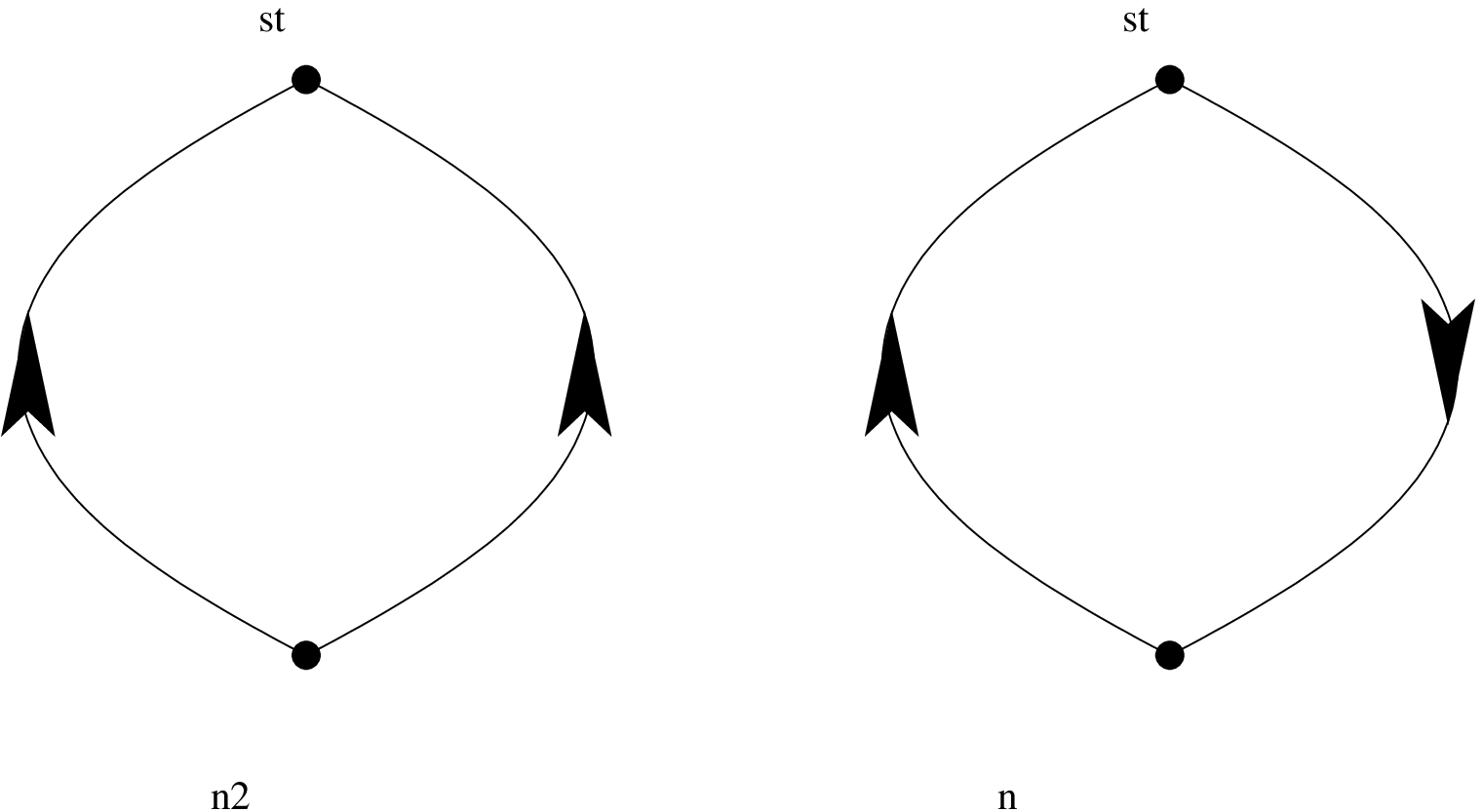}
\end{center}
\caption{The two gluings $\pi_{1}$, $\pi_{2}$ used in computing
$\ipA{\Tr X^{2}}$ on $\AA $.}\label{fig:bigonexample}
\end{figure}

\begin{figure}[htb]
\psfrag{st}{$*$}
\psfrag{t1}{$-20\cdot 3$}
\psfrag{t2}{$36\cdot 3$}
\psfrag{t3}{$36\cdot 3$}
\psfrag{t4}{$-48\cdot 3^{2}$}
\psfrag{a1}{$64\cdot 3^{3}$}
\psfrag{a2}{$-48\cdot 3^{2}$}
\psfrag{a3}{$-48\cdot 3^{2}$}
\psfrag{a4}{$36\cdot 3$}
\psfrag{b1}{$64\cdot 3^{3}$}
\psfrag{b2}{$-48\cdot 3^{2}$}
\psfrag{b3}{$-48\cdot 3^{2}$}
\psfrag{v4}{$36\cdot 3$}
\begin{center}
\includegraphics[scale=0.4]{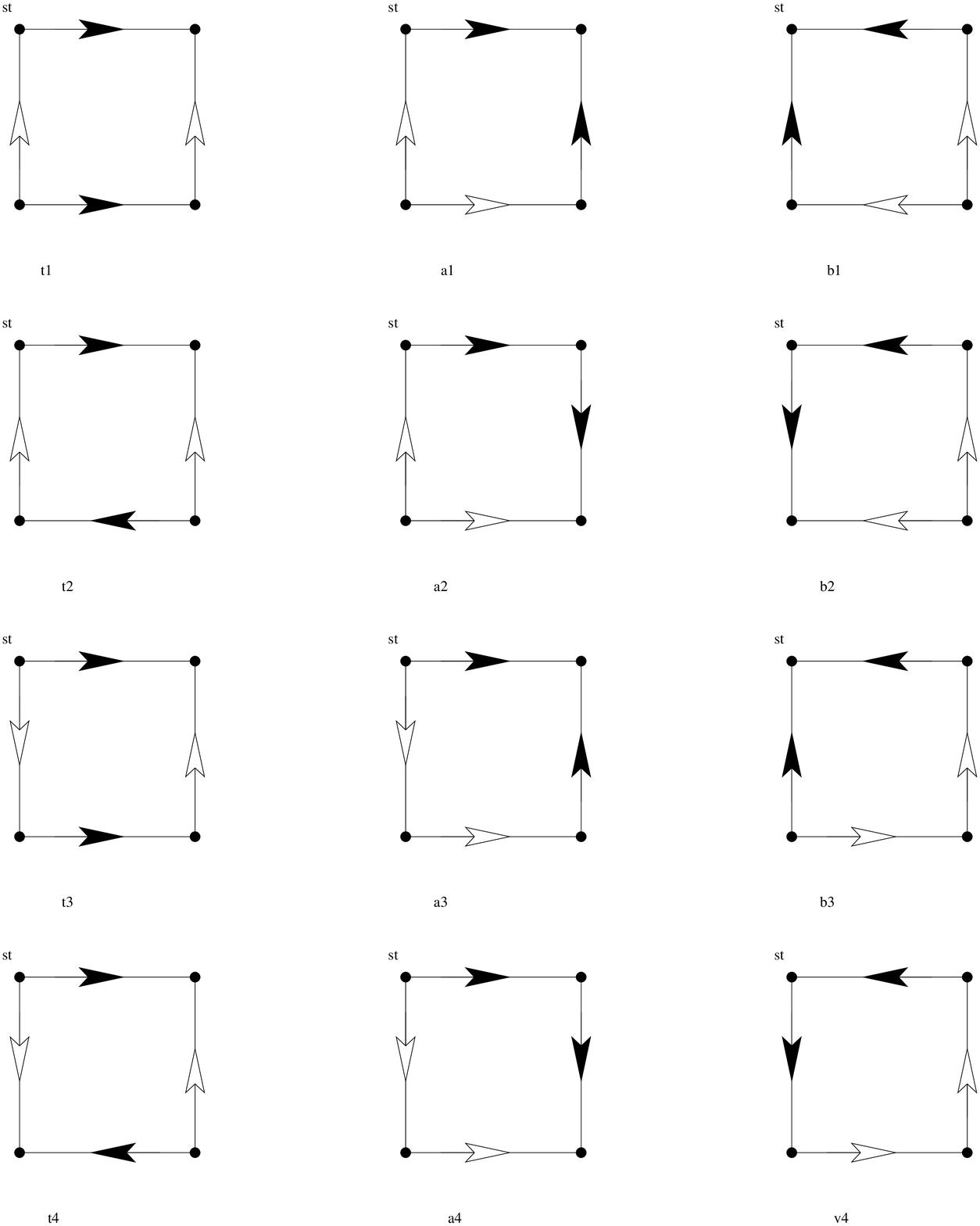}
\end{center}
\caption{Computing $\ipA{\Tr X^{4}}$ on $\AA $.}\label{fig:squareexample}
\end{figure}

\begin{figure}[htb]
\psfrag{st}{$*$}
\psfrag{e1}{$E_{1}$}
\psfrag{e2}{$E_{2}$}
\psfrag{e3}{$E_{3}$}
\psfrag{e4}{$E_{4}$}
\psfrag{e5}{$E_{5}$}
\psfrag{e6}{$E_{6}$}
\begin{center}
\includegraphics[scale=0.4]{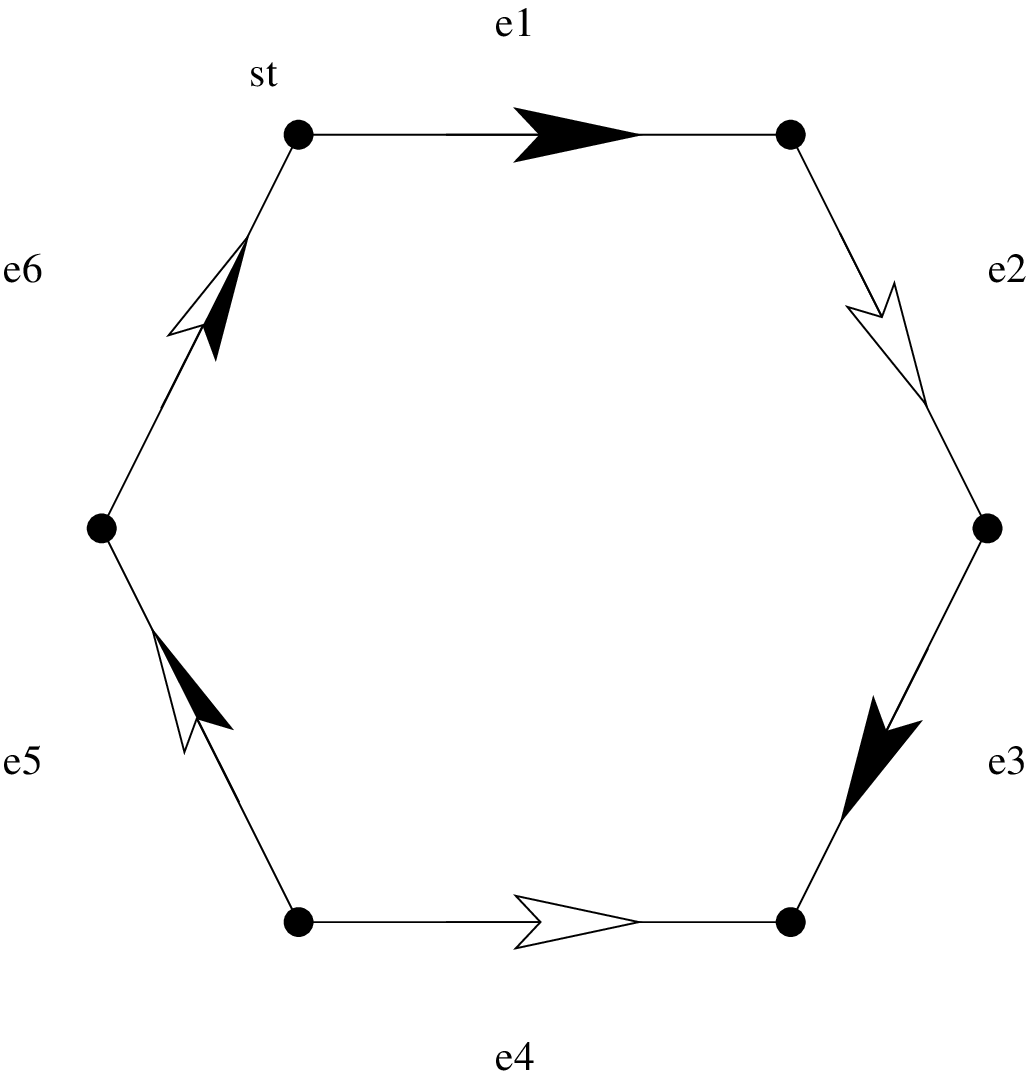}
\end{center}
\caption{One term in evaluating $\ipA{\Tr X^{6}}$ on $\AA $. The pairs
$\{E_{1},E_{3} \}, \{E_{5},E_{6} \}$ have been glued with a twist, the
pair $\{E_{2}, E_{4} \}$ without.  We have $N (\pi) = 1$.  This term
contributes $-153\cdot 3=-459.$ }\label{fig:hexagon}
\end{figure}

\begin{remark}\label{rem:previous}
In the computation of $\ipA{\Tr X^{k}}$, one can replace $3^{N (\pi)}$
in \eqref{eq:th1} with $2^{N (\pi)}$.  One then finds the result of
evaluating $\ipA{\Tr X^{k}}$ on the $2\times 2$ Hermitian matrices over
$\OO$.  In fact, one can replace $3^{N (\pi)}$ with $n^{N (\pi)}$ for
an indeterminate $n$, and one obtains a rational polynomial $ C_{\OO}
(n,k)$.  By analogy with the associative cases ($\RR$, $\CC$, $\HH$),
one can regard this as computing the expectation of $\Tr X^{k}$ on an
``algebra'' of $n\times n$ Hermitian matrices over $\OO$.  Of course
this is only an analogy: there is no such algebra, and the computation
is purely formal.

Some examples of the polynomials $C_{\OO} (n,k)$ are given in Table
\ref{tab:oct}.  One sees from the table that these polynomials
apparently have surprising properties.  For example, they are all
\emph{integral} polynomials.  Moreover, they are \emph{alternating}.
One sees this latter property in \cite{mulase.waldron} for the
polynomials over $\HH$, which are obtained from those over $\RR$ by a
changing the parameter $n$ and including an overall sign for
contributions from surfaces of odd Euler characteristic, a phenomenon
Mulase--Waldron explain via a duality between the Gaussian orthogonal
and Gaussian symplectic matrix ensembles.

It would be very interesting to provide an algebraic model with $n$ as
a parameter that actually produces these polynomials.  Proving that
they are integral and alternating would also be interesting, as well
as providing a direct combinatorial interpretation of their
coefficients.

\begin{table}[htb]
\begin{center}
\begin{tabular}{|c||l|}
\hline
$k$& $C_{\OO} (n,k)$\\
\hline
$2$&$4 n^2 - 3 n$\\
$4$& $32 n^3 - 60 n^2 + 31 n $\\
$6$& $299 n^4 - 930 n^3 + 1081 n^2 - 435 n$\\
$8$& $5992 n^5 - 26577 n^4 + 50942 n^3 - 46875 n^2
+ 16728 n $\\
\hline
\end{tabular}
\medskip
\caption{Octonionic trace polynomials\label{tab:oct}}
\end{center}
\end{table}

\end{remark}

\section{The perturbation series}\label{s:aa:perturb}

\subsection{}

Let $t, g_{3}, g_{4},\dotsc$ be indeterminates.  We regard the $g_{k}$
as deformation parameters, and package them together into a vector $\g
= (g_{3},g_{4},\dotsc)$.  Let $\m = (m_{3},m_{4},\dotsc )\in
\prod_{k\geq 3}\ZZ_{\geq 0}$ be a vector of multiplicities; we assume
$m_{k} = 0$ for all sufficiently large $k$.  Then we write $\g^{\m}$
for the monomial $\prod g_{k}^{m_{k}}$. Let $N (\m) = \sum k m_{k}$.

We consider the 
pertubation series
\[
F (\g, t, X)  = \exp \bigl(\sum_{k \geq 3}(g_{k}\Tr X^{k})t^{k}\bigr)
\]
and the expectation 
\begin{equation}\label{eq:perturb2}
\ipA{F (\g,t,X)} \in \QQ [g_{3},g_{4},\dotsc ]\fps{t}.
\end{equation}
Our goal is to compute the coefficient of $t^{N}$ in
\eqref{eq:perturb2} in terms of gluings of polygons as in Theorem
\ref{th:tracexk}.  Clearly this coefficient is a homogeneous polynomial of degree
$N$ in the monomials $\g^{\m}$, where $N = N (\m)$, so it suffices to
compute the coefficient of $\g^{\m}t^N$.  To state the answer, we need
to extend our previous notation.

\subsection{}

Let $\Pi_{\m}$ be the disjoint union of polygons $ \coprod
\Pi_{k}^{*}$, where we take $m_{k}$ copies of $\Pi_{k}^{*}$.  We write
$\Pi \in \Pi_{\m}$ to mean that $\Pi$ is a connected component of
$\Pi_{\m}$.  Let $\sE$ be the set of all edges of $\Pi_{\m}$, and for
each $\Pi \in \Pi_{\m}$ we denote its set of edges by $\sE (\Pi)$.

Let $\pi$ be an oriented gluing of the edges of $\Pi_{\m}$, where as
before we allow both twisted and untwisted identifications. Note that
$\pi$ will in general glue together edges in different connected
components.  We say that a map $f\colon \sE \rightarrow U$ is
compatible with $\pi$ if it satisfies the extensions of our previous
conditions:
\begin{enumerate}
\item For each connected component $\Pi\in \Pi_{\m}$, the product 
$
\prod_{E\in \sE (\Pi)} f (E)
$
must be real-valued.
\item If $E,E'$ are identified by $\pi$, then $f (E) = f (E')$.
\end{enumerate}
We define the sign $\alpha (\pi ,f)$ exactly as before, and put 
\[
\Omega (\pi ,f) = \alpha (\pi ,f) \prod_{\Pi \in \Pi_{\m} } [f (\sE
(\Pi))]_{\fn},
\]
where we write $[f (\sE
(\Pi))]_{\fn}$ to mean $[f (E_{1})\cdots f (E_{l})]_{\fn}$, where
$E_{1},\dotsc ,E_{l}$ are the  edges  $\Pi$, again arranged clockwise
starting from the distinguished vertex. 

Finally, we define the group $\Aut \pi$ of automorphisms of $\pi$ to
be the group induced from permuting the connected components.  Note
that cyclic rotation of the connected components is not allowed, since
such symmetries do not preserve the distinguished vertex.  We can now
state our theorem:

\begin{thm}\label{th:perturb}
The coefficient of $\g^{\m}$ in the coefficient of $t^{N (\m)}$ is 
\[
2^{-N (\m )/2} \sum_{\pi} \sum_{f} \frac{\Omega (\pi ,f)}{|\Aut \pi |} 3^{N (\pi)},
\]
where the first sum is taken over all oriented gluings  of the edges of
$\Pi_\m $, and the second sum is taken over all functions $f\colon \sE
\rightarrow U$ compatible with $\pi $. 
\end{thm}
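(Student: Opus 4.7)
The plan is to reduce Theorem \ref{th:perturb} to repeated application of Theorem \ref{th:tracexk}, carefully tracking how the exponential expansion interacts with the symmetry of identical polygons.

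First I would expand
\[
F(\g,t,X) = \prod_{k\geq 3}\sum_{m_{k}\geq 0}\frac{(g_{k}\Tr X^{k}t^{k})^{m_{k}}}{m_{k}!}
\]
and isolate the coefficient of $\g^{\m}t^{N(\m)}$ in $\ipA{F(\g,t,X)}$, which is
\[
\frac{1}{\prod_{k}m_{k}!}\,\ipA{\textstyle\prod_{k}(\Tr X^{k})^{m_{k}}}.
\]
The task thus reduces to computing the Gaussian expectation of a product of trace monomials.

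Next I would apply Proposition \ref{prop:trpol} to each factor $\Tr X^{k_{i}}$, expressing $\prod_{k}(\Tr X^{k})^{m_{k}}$ as a sum of monomials in the real variables $A^{j}_{ab}$, with each trace factor corresponding to one polygon $\Pi_{k_{i}}^{*}$ in $\Pi_{\m}$. Applying Wick's theorem and following the argument in Theorem \ref{th:tracexk}, each nontrivial Wick pairing of the $A^{j}_{ab}$ corresponds to an oriented gluing of $\Pi_{\m}$, with the new feature that edges from different connected components may now be identified. The sign analysis for twists, the requirement that units match across glued edges, the overall $2^{-N(\m)/2}$ overcounting (from the $N(\m)/2$ twisted/untwisted ambiguities), and the counting of vertex equivalence classes producing the $3^{N(\pi)}$ factor all carry over verbatim. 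The reality condition on octonion products applies \emph{per polygon}, yielding condition (i) of the extended compatibility definition, because the fully-nested octonion expression attached to each polygon is an independent factor unaffected by which other polygons its $A^{j}_{ab}$ variables happen to be Wick-paired with.

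Finally I would convert the $\frac{1}{\prod_{k}m_{k}!}$ prefactor into $\frac{1}{|\Aut\pi|}$ via orbit--stabilizer. The group $\prod_{k}S_{m_{k}}$ acts on labeled oriented gluings of $\Pi_{\m}$ by permuting the copies of $\Pi_{k}^{*}$, and $\Aut\pi$ is by definition the stabilizer of $\pi$ under this action. Since $\Omega(\pi,f)$ and $N(\pi)$ are invariant under this action, a labeled orbit of size $\prod_{k}m_{k}!/|\Aut\pi|$ contributes the same value at each point, so summing over all labeled gluings and dividing by $\prod_{k}m_{k}!$ yields exactly the sum over equivalence classes weighted by $1/|\Aut\pi|$. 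The main obstacle I anticipate is the bookkeeping: verifying that applying Proposition \ref{prop:trpol} \emph{separately} to each trace factor produces the correct sign $\alpha(\pi,f)$ and the correct per-component reality constraints, and confirming that inter-polygon Wick pairings introduce no coupling between the fully-nested octonion products on distinct components. Once this factorization is established, the rest is a routine extension of the proof of Theorem \ref{th:tracexk}.
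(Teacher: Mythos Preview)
Your proposal is correct and follows essentially the same approach as the paper, which simply invokes the exponential formula together with Theorem~\ref{th:tracexk} and notes that the per-component reality condition on $f$ follows from Proposition~\ref{prop:trpol}. Your write-up is considerably more explicit than the paper's two-sentence proof---in particular, the orbit--stabilizer passage from $1/\prod_{k}m_{k}!$ to $1/|\Aut\pi|$ and the observation that the fully-nested octonion factors attached to distinct polygons decouple are left implicit in the paper---but the underlying argument is the same.
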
 

\begin{proof}
The proof is a simple application of the exponential formula for
generating functions together with Theorem \ref{th:tracexk}.  The only
subtlety is the point that one considers all functions $f$ satisfying
the condition that $\prod f (E)$ be real-valued on each connected
component of $\Pi_{\m}$.  But this follows from the formula for the
trace given in Proposition \ref{prop:trpol}.
\end{proof}

\begin{ex}\label{ex:3.3}
We give an example to show how to apply Theorem \ref{th:perturb} and
compute the coefficient of $g_{3}^{2}$.  Thus $\m = (2,0,\dotsc)$ and
$\Pi_{\m}$ is two triangles, and the only automorphism is interchanging
the two components.  Thus the coefficient is $\ipA{(\Tr
X^{3})^{2}}/2$.  We will show how to compute the expectation
$\ipA{(\Tr X^{3})^{2}}$.

There are $5\cdot 3 = 15$ different gluings of the edges of
$\Pi_{\m}$.  Of these there are three essentially different types
(Figure~\ref{fig:twotris}); the first occurs $9$ times, and the other
two $3$ times each.  For each pairing of the edges, we must choose
whether we glue with a twist or not.  We can denote the twists by
vectors in $(\ZZ /2\ZZ )^{3}$.  Thus $010$ means $a$ and $c$ are glued
without a twist, whereas $b$ is glued with a twist.

Consider the gluing of type (I).  There are four possibilities
for the map $f$: (i) $f$ is identically $1$, (ii) $f$ takes $b$ to $1$
and $a, c$ to imaginary units, (iii) $f$ takes $a, b$ to $1$ and $c$
to any imaginary unit, and (iv) $f$ takes $b, c$ to $1$ and $a$ to any
imaginary unit.  The contributions of each of these, along with the
quantity $N (\pi)$, is summarized in Table \ref{tab:I}.  One sees that
a gluing of type (I) contributes $32\cdot 3^{3} - 32\cdot 3^{2} +
8\cdot 3$.

Next consider type (II); the contributions are summarized in Table
\ref{tab:II}.  This time there are three different possibilites: (i)
$f$ is identically $1$, (ii) $f$ takes one of $a,b,c$ to $1$ and the
other two to imaginary units, and (iii) $f$ takes $a, b, c$ to three
different imaginary units. Note that real-valuedness forces that in
(iii), the images of $f$ generate a subalgebra isomorphic to $\HH$.
This means that all products for this type are associative, so there
is no need to average over bracketings.  Thus the $7$ in column (iii)
corresponds to the $7$ lines in the Fano plane, and the $6$
corresponds to the ways to order the three imaginary units in a given
line.  Note also that type (iii) contributions do not arise in Theorem
\ref{th:tracexk}, since there we do not have two different sets of
edges to label with matching units.  The result is that a gluing of
type (II) contributes $16\cdot 3^{3}-24\cdot 3^{2} + 16\cdot 3$.

Finally consider type (III).  We have the same three possibilities for
$f$ as in type (II).  The contributions are essentially the same as in
Table \ref{tab:II}, except that each twisting datum should be replaced
by its complement.  We give the result in Table \ref{tab:III}.

To get the  final result we sum these contributions and divide by
$2^{N (\m)/2} =8$.  We obtain $9 (32\cdot 3^{3} - 32\cdot 3^{2} +
8\cdot 3)/8 + (3+3) (16\cdot 3^{3}-24\cdot 3^{2} + 16\cdot 3)/8 =
2709$.  One can verify that this agrees with a direct evaluation of
the integral defining $\ipA{(\Tr X^{3})^{2}}$.

\begin{figure}[htb]
\psfrag{a}{$a$}
\psfrag{b}{$b$}
\psfrag{c}{$c$}
\psfrag{i}{(I)}
\psfrag{ii}{(II)}
\psfrag{iii}{(III)}
\begin{center}
\includegraphics[scale=0.5]{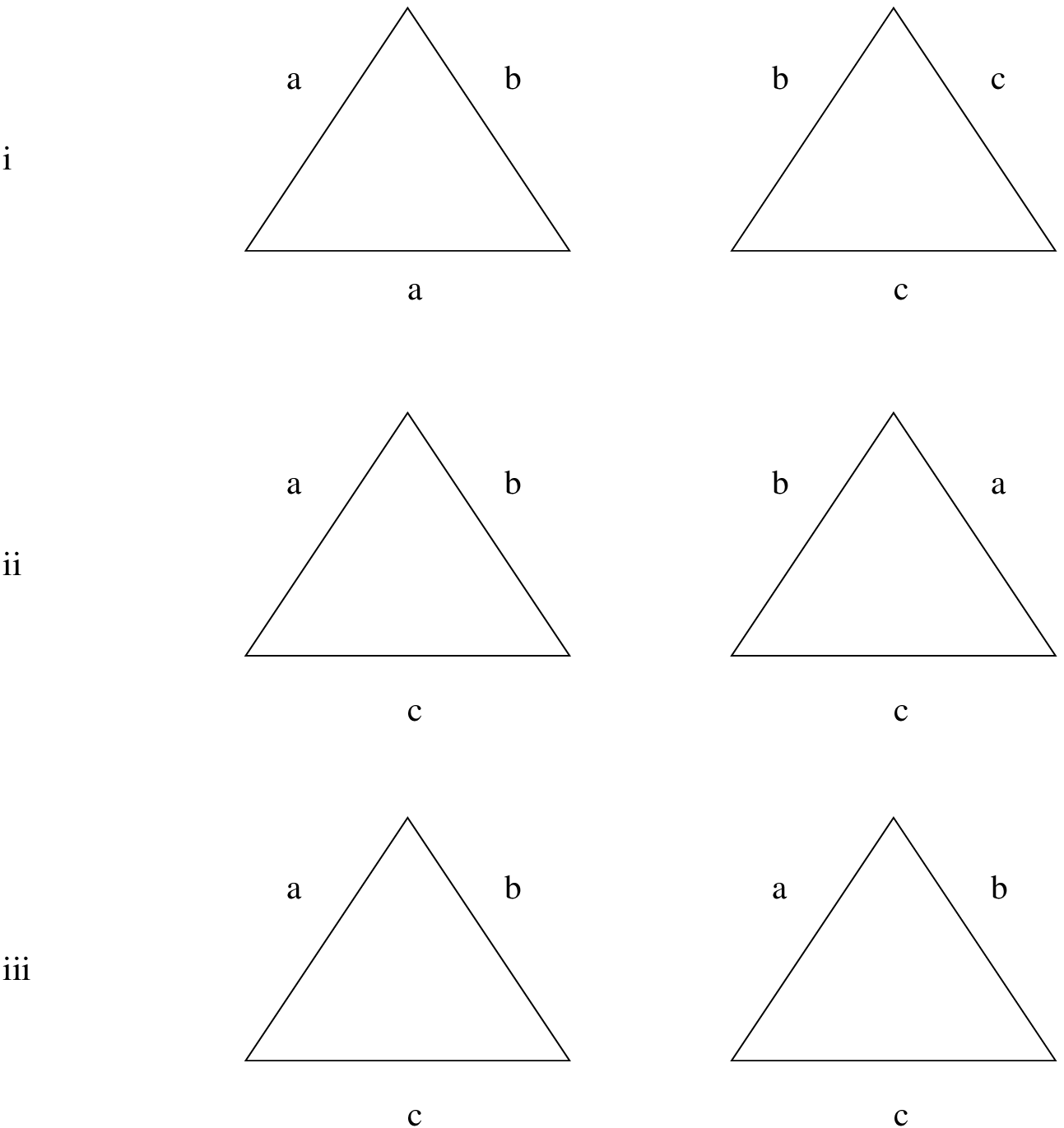}
\end{center}
\caption{Gluings of two triangles\label{fig:twotris}}
\end{figure}
\end{ex}

\begin{table}[htb]
\begin{center}
\begin{tabular}{|c|c||c|c|c|c|}
\hline
$N (\pi)$&twisting&(i) all 1&(ii) one 1&(iii) two 1s on left&(iv) two 1s on right\\
\hline\hline
$3^{3}$&$000$&	1& $7^{2}$& $7$ &$7$\\
\hline
$3^{2}$&$100$&	1& $-7^{2}$& $7$ &$-7$\\
$3^{3}$&$010$&	1& $7^{2}$& $7$ &$7$\\
$3^{2}$&$001$&	1& $-7^{2}$& $-7$ &$7$\\
\hline
$3^{2}$&$110$&	1& $-7^{2}$& $7$ &$-7$\\
$3^{1}$&$101$&	1& $7^{2}$& $-7$ &$-7$\\
$3^{2}$&$011$&	1& $-7^{2}$& $-7$ &$7$\\
\hline
$3^{1}$&$111$&	1& $7^{2}$& $-7$ &$-7$\\
\hline
\end{tabular}
\end{center}
\caption{Contributions of gluings of type (I)\label{tab:I}}
\end{table}

\begin{table}[htb]
\begin{center}
\begin{tabular}{|c|c||c|c|c|}
\hline
$N (\pi)$&twisting&(i) all 1&(ii) one 1&(iii) 3 different units\\
\hline\hline
$3^{3}$&$000$&	1& $3\cdot 7$& $6\cdot 7$\\
\hline
$3^{2}$&$100$&	1& $-7$& $-6\cdot 7$\\
$3^{2}$&$010$&	1& $-7$& $-6\cdot 7$\\
$3^{2}$&$001$&	1& $-7$& $-6\cdot 7$\\
\hline
$3^{1}$&$110$&	1& $-7$& $6\cdot 7$\\
$3^{1}$&$101$&	1& $-7$& $6\cdot 7$\\
$3^{1}$&$011$&	1& $-7$& $6\cdot 7$\\
\hline
$3^{1}$&$111$&	1& $3\cdot $& $-6\cdot 7$\\
\hline
\end{tabular}
\end{center}
\caption{Contributions of gluings of type (II)\label{tab:II}}
\end{table}

\begin{table}[htb]
\begin{center}
\begin{tabular}{|c|c||c|c|c|}
\hline
$N (\pi)$&twisting&(i) all 1&(ii) one 1&(iii) 3 different units\\
\hline\hline
$3^{1}$&$000$&	1& $3\cdot 7$& $-6\cdot 7$\\
\hline
$3^{1}$&$100$&	1& $-7$& $6\cdot 7$\\
$3^{1}$&$010$&	1& $-7$& $6\cdot 7$\\
$3^{1}$&$001$&	1& $-7$& $6\cdot 7$\\
\hline
$3^{2}$&$110$&	1& $-7$& $-6\cdot 7$\\
$3^{2}$&$101$&	1& $-7$& $-6\cdot 7$\\
$3^{2}$&$011$&	1& $-7$& $-6\cdot 7$\\
\hline
$3^{3}$&$111$&	1& $3\cdot $& $6\cdot 7$\\
\hline
\end{tabular}
\end{center}
\caption{Contributions of gluings of type (III)\label{tab:III}}
\end{table}

\begin{remark}
As in Remark \ref{rem:previous}, one can consider replacing $3^{N
(\pi)}$ by $n^{N (\pi)}$ and use our gluing calculus to produce
polynomials in $n$ for the mixed moments.  For example for $\ipA{(\Tr
X^{3})^{2}}$, one finds the result is $192n^{3}-324n^{2}+147n$.
Again, these polynomials appear to be integral, if one ignores the
denominators coming from the exponential series.  They also appear to
have alternating coefficients, just as in Table \ref{tab:oct}.  We
have no explanation for this fact.
\end{remark}

\part{The spin factor.}\label{part:sf}

\section{Background}\label{s:sf:background}

\label{ss:sf:notation}
\subsection{} We begin by recalling the definition of the \emph{spin
factor} $\LL = \LL_{1,n}$.  As an $\RR$-vector space $\LL$ is $\RR
\times \RR^{n}$.  Write elements $\x \in \LL$ as pairs $(x_{0},x)$,
where $x_{0}\in \RR$.  Then the Jordan product is defined by $\x
\jordan \y = (x_{0}y_{0} + x\cdot y, x_{0}y + y_{0}x)$, where $\cdot$
denotes the usual Euclidean dot product on $\RR^{n}$.  The trace map
$\Tr \colon \LL \rightarrow \RR$ is defined by $\x \mapsto x_{0}$.  We
define powers $\x ^{k}$ as in the case of $\AA$, through
\eqref{eq:powers}.

The algebra $\LL$ is called a spin factor because of its connection
with Clifford algebras \cite[\S 1.9]{mccrimmon}.  Let $W$ be a real
vector space of dimension $n$ with orthonormal basis $v_{i}$,
$i=1,\dotsc ,n$.  The Clifford algebra $C (W)$ is the unital
associative algebra generated by $W$ modulo the relations
$v_{i}^{2}=1$, and $v_{i}v_{j}=-v_{j}v_{i}$ for all
$i\not =j$.  The algebra $C (W)$ has dimension $2^{n}$, with an
additive basis given by $1$ and all expressions of the form
\[
v_{i_{1}}\cdots v_{i_{k}}, \quad  \text{where $1 \leq i_{1} < \dotsb < i_{k}
\leq n$ and $k=1,\dotsc ,n$}.
\]
The $n+1$-dimensional subspace spanned by $1$ and the $v_{i}$ does not
form an associative subalgebra, but it does inherit the structure of a
Jordan algebra as in \S \ref{ss:assoctojord}.  It is easy to check
that this algebra is exactly $\LL_{1,n}$.  Since $C
(W)$ can be realized as a subalgebra of the $2^{n}\times 2^{n}$
symmetric matrices over $\RR$, this means that $\LL$ can be viewed as
a sub-Jordan algebra of $V_{\RR}$.  For more details we refer to
\cite{mccrimmon}.

\section{Computation of the basic trace integral}\label{s:sf:traceint}

\subsection{}\label{ss:powers} The goal of this section is to compute
combinatorially the expectations $\ipL{\Tr \x ^{k}}$ of the trace
monomials.  As we shall see, when $k$ is fixed and $n$ is taken as a
parameter, we obtain a polynomial $C_{\LL} (n,k)$ in $n$, just as in the classical
case of Hermitian matrices.  We shall also see that the combinatorial
model is one dimensional, as in the case of Feynman diagrams.  In
fact, we give two closely related models.  The first allows one to
quickly evaluate $C_{\LL} (n,k)$, whereas the second makes it easy to
incorporate automorphisms in the model.

A first step is to compute the trace polynomials explicitly.  We use
the notation of \S \ref{ss:sf:notation}, along with the convention
that for the vector part $x$ of $\x = (x_{0},x)$, the symbol $x^{k}$
denotes
\begin{itemize}
\item the scalar $(x \cdot x)^{k/2}$ if $k$ is even, and 
\item the vector $(x \cdot x)^{(k-1)/2} x$ if $k$ is odd.
\end{itemize}
Then we have the following result; we omit the easy proof by induction.

\begin{prop}\label{prop:tracepolys}
We have $\x^{k} = (z_{0}, z)$, where
\begin{equation}\label{eq:sf:tracepoly}
z_{0} = \sum_{\substack{i+j=k\\
\text{$j$ even}}} \binom{k}{i}x_0^{i} x^{j}, \quad z = \sum_{\substack{i+j=k\\
\text{$j$ odd}}} \binom{k}{i}x_0^{i} x^{j}.
\end{equation}
\end{prop}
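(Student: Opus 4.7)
The plan is to prove the formula by induction on $k$, using the recursive definition $\x^k = \x \jordan \x^{k-1}$ and the specific form of the spin-factor Jordan product.

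The base case $k=1$ is immediate: the only term with $j$ even is $j=0$, giving $z_0 = x_0$, and the only term with $j$ odd is $j=1$, giving $z = x$, matching $\x = (x_0, x)$. For the inductive step, suppose $\x^{k-1} = (w_0, w)$ with $w_0$ and $w$ given by the formula with $k-1$ in place of $k$. By the definition of the Jordan product on $\LL$, we have
\[
\x^k = \x \jordan \x^{k-1} = (x_0 w_0 + x\cdot w,\ x_0 w + w_0 x).
\]
I would then compute each of the four summands separately, using the key bookkeeping fact built into the convention for $x^b$: when $b$ is odd, $x^b = (x\cdot x)^{(b-1)/2} x$ is a vector and $x \cdot x^b = (x \cdot x)^{(b+1)/2} = x^{b+1}$; when $b$ is even, $x^b = (x\cdot x)^{b/2}$ is a scalar and $x^b \cdot x = x^{b+1}$. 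Thus multiplying by $x_0$ preserves the parity of $j$ and raises $i$ by one, while the pairings $x \cdot w$ and $w_0 x$ raise $j$ by one (flipping its parity) and leave $i$ unchanged.

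Concretely, for the scalar part I would rewrite $x_0 w_0$ as a sum over $(i,j)$ with $i+j=k$, $j$ even, $i \geq 1$ with coefficient $\binom{k-1}{i-1}$, and rewrite $x \cdot w$ as the analogous sum with $j \geq 2$ and coefficient $\binom{k-1}{i}$. The two endpoints $i=0$ and $i=k$ (when $k$ is even) are handled by a single sum each, and for $1 \leq i \leq k-1$ the combined coefficient is $\binom{k-1}{i-1} + \binom{k-1}{i} = \binom{k}{i}$ by Pascal's identity. The vector-part computation is completely parallel, with $x_0 w$ and $w_0 x$ playing the analogous roles.

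There is no real obstacle here; the only delicate point is keeping track of which power $x^b$ denotes a scalar versus a vector, and verifying that the $x\cdot w$ and $w_0 x$ contributions correctly shift $b \mapsto b+1$ so that the parity condition on $j$ in the resulting sum is the expected one. Once this bookkeeping is set up, the computation reduces to a single application of Pascal's rule in each case.
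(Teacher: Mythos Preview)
Your proposal is correct and is exactly the approach the paper has in mind: the paper states only ``we omit the easy proof by induction,'' and your argument supplies precisely that induction, reducing the inductive step to Pascal's identity via the bookkeeping on the scalar/vector meaning of $x^{j}$. The only minor slip is the parenthetical ``(when $k$ is even)'' attached to the endpoints: the term $i=k$, $j=0$ is present for all $k$, while it is the $i=0$, $j=k$ term that appears only when $k$ is even (and the analogous endpoint in the vector part only when $k$ is odd); this does not affect the argument.
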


\subsection{}\label{ss:bbs}
Using Proposition \ref{prop:tracepolys}, it is easy to directly
compute the expectation $\ipL{\Tr \x^{k}}$.  Our goal is to give a
combinatorial description that makes the role of the parameter $n$
more apparent.  

Let $S$ be a finite set of points labelled by $\{1,\dotsc
,k \}$, where we assume $k$ is even.  Let $S = S_{0} \cup S_{b}$ be a
partition of $S$ into two subsets, each of even order.  We take the
points in $S_{b}$, order them by their labels, and join consecutive
ones by an edge.  The result is a collection of points and edges that
we call a \emph{barbell structure} on $S$, with the set of edges being
called the \emph{barbells}.  

Let $\beta$ be a barbell structure on $S$.  We define a \emph{pairing} $\pi (\beta)$
of $\beta$ to be a pairing of the elements of $S_{0}$ and the elements
of $S_{b}$.  In other words, we can freely join any elements of the
two parts together in pairs, but we cannot join an element in one part
to an element in the other.  Each pairing produces a union of
edges (in $S_{0}$) and circles (in $S_{b}$).  Let $N (\pi)$ be
the number of connected components of $\pi$ in $S_{b}$.  Finally
define a \emph{coloring} of $\pi $ to be an assignment of $\{1,\dotsc
,n \}$ to each connected component in $S_{b}$.

\begin{thm}\label{th:sf:expectation}
Let $\LL$ be the spin factor $\LL_{1,n}$.  For $k$ odd we have
$\ipL{\Tr \x^{k}} = 0$.  For $k$ even we have
\begin{equation}\label{eq:sf:expectation}
\ipL{\Tr \x^{k}} = \sum_{\beta} \sum_{\pi} n^{N (\pi)},
\end{equation}
where $\beta$ ranges over all barbell structures on $\{1,\dotsc ,k \}$
and $\pi$ ranges over all pairings of $\beta$.
\end{thm}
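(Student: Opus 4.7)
The plan is to expand $\Tr \x^k$ in coordinates, use the independence of the one-dimensional and $n$-dimensional Gaussian pieces, and match each combinatorial factor that appears against a piece of the barbell--pairing data.

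First, Proposition \ref{prop:tracepolys} gives
\begin{equation*}
\Tr \x^k = z_0 = \sum_{\substack{i + j = k\\j\text{ even}}} \binom{k}{i} x_0^{i} (x \cdot x)^{j/2},
\end{equation*}
and the Jordan square satisfies $\Tr(\x\jordan \x) = x_0^2 + x \cdot x$, so the weight $\exp(-\Tr \x^2/2)$ is the standard Gaussian density on $\RR^{n+1}$ in the coordinates $x_0, x_1, \dots, x_n$. In particular $x_0$ is independent of $x = (x_1,\dots,x_n)$, giving
\begin{equation*}
\ipL{\Tr \x^k} = \sum_{\substack{i + j = k\\ j\text{ even}}} \binom{k}{i}\, \ip{x_0^{i}}_\RR\, \EE\bigl[(x\cdot x)^{j/2}\bigr].
\end{equation*}
For $k$ odd, the constraint $j$ even forces $i$ odd, so Theorem \ref{th:wick} kills every term, giving the vanishing statement.

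For $k$ even, set $j = 2m$; then $i = k - 2m$ is even and Theorem \ref{th:wick} gives $\ip{x_0^{k-2m}}_\RR = (k-2m-1)!!$, the number of pairings of a $(k-2m)$-element set. The binomial $\binom{k}{k-2m} = \binom{k}{2m}$ counts the ways to split $\{1,\dots,k\}$ as $S_0 \sqcup S_b$ with $|S_b| = 2m$, and such a split uniquely determines the barbell structure $\beta$, since the barbell edges are forced by the label-ordering on $S_b$. Multiplying the binomial by $(k-2m-1)!!$ therefore amounts to choosing $\beta$ together with a pairing of $S_0$; this reproduces the $S_0$ half of the RHS of \eqref{eq:sf:expectation}.

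The key step is to identify $\EE[(x \cdot x)^m]$ with $\sum_{\pi_b} n^{N(\pi_b)}$, where $\pi_b$ runs over pairings of $S_b$. Expanding $(x \cdot x)^m = \sum_{i_1,\dots,i_m}\prod_l x_{i_l}^2$ and applying Wick coordinate-by-coordinate (using $\EE[x_a x_b] = \delta_{ab}$), each term $\EE[x_{i_1}^2\cdots x_{i_m}^2]$ equals a sum over pairings $p$ of the $2m$ variable slots of $\prod_{(a,b)\in p}\delta_{i_{l(a)}, i_{l(b)}}$, where $l(a)\in\{1,\dots,m\}$ is the factor containing slot $a$. Regarding the $m$ factors as vertices and the pairs in $p$ as edges produces a $2$-regular multigraph whose components are cycles, and summing the index tuples compatible with $p$ gives $n^{c(p)}$ for $c(p)$ the cycle count. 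Now the correspondence with the theorem is immediate: ordering $S_b$ by labels and joining consecutive points is exactly the assignment of two slots to each of the $m$ barbells/factors, a pairing $\pi_b$ of $S_b$ is exactly a pairing of the slots, and $c(p) = N(\pi)$. Combining everything yields $\ipL{\Tr \x^k} = \sum_\beta\sum_\pi n^{N(\pi)}$.

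The only real obstacle is the bookkeeping: verifying that the slot-to-factor map $l(\cdot)$ determined by the barbells matches the ``consecutive labels'' convention of the theorem, and that the $S_0/S_b$ partition is counted exactly once on each side. Once that bijection is pinned down, the identity is a direct application of Wick's theorem together with the independence of $x_0$ and $x$.
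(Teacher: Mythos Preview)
Your proof is correct and follows essentially the same route as the paper's: expand $\Tr \x^{k}$ via Proposition~\ref{prop:tracepolys}, factor the Gaussian into independent pieces for $x_{0}$ and $x$, identify $\binom{k}{2m}$ with the choice of barbell structure and the Wick number $(k-2m-1)!!$ with the pairings on $S_{0}$, and then evaluate $\EE[(x\cdot x)^{m}]$ by Wick-expanding the $2m$ variable slots and recognising the resulting Kronecker-delta constraints as the connected components of a barbell pairing. If anything, your treatment of the last step (via the $2$-regular multigraph on the $m$ factors) is slightly more explicit than the paper's version.
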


\begin{proof}
Proposition \ref{prop:tracepolys} shows that $\Tr \x^{k}$ is an odd
function of $x_{0}$ if $k$ is odd, so certainly $\ipL{\Tr \x^{k}} = 0$
in that case.  So suppose $k$ is even and write $k=i+j$ with $i,j$
even.  There are clearly $\binom{k}{i}$ barbell structures on
$\{1,\dotsc ,k \}$ with $|S_{0}| = i$, and any such one will
contribute a factor of $w (k)$ coming from the pairings in $S_{0}$.
Thus the result will follow if we can show
\[
\ipL{x^{j}} = \Bigl\langle\bigl(\sum_{p=1}^{n}
x_{p}^{2}\bigr)^{j/2}\Bigr\rangle_{\LL } = \sum_{\pi'} n^{N (\pi')},
\]
where $\pi'$ ranges over the colored pairings of a \emph{fixed}
collection of $j/2$ barbells.

This can be seen as follows.  Use $\pi '$ to pair the endpoints of
each barbell.  Consider labeling the endpoints of each barbell with a
variable $x_{p}$, $p=1,\dotsc ,n$ such that the same variable appears
at either end, and such that the variable appearing along each
connected component is constant.  There are clearly $n^{N (\pi ')}$
such assignments of variables.  Each one corresponds to a monomial
produced by multiplying out $(\sum_{p=1}^{n} x_{p}^{2})^{j/2}$, where
the variable $x_{p}$ from the $l$th factor is placed on the $l$th
barbell.  This completes the proof.
\end{proof}

\begin{ex}\label{ex:sf:sixth}
In Figure \ref{fig:tracesix} we give the full computation of $\ipL{\Tr
\x^{6}}$.  There are barbell structures with $|S_b| = 0,2,4,6$.
\end{ex}

\begin{figure}[htb]
\psfrag{contribution}{Contribution}
\psfrag{s0}{$S_{0}$}
\psfrag{sb}{$S_{b}$}
\psfrag{1}{(i)}
\psfrag{2}{(ii)}
\psfrag{3}{(iii)}
\psfrag{e}{$\emptyset $}
\psfrag{w6}{$w (6)= 15$}
\psfrag{w4}{$w (4)\binom{6}{4}n = 45n$}
\psfrag{w22}{(ii) $w (2)\binom{6}{2} 2n = 30n$}
\psfrag{w21}{(i) $w (2)\binom{6}{2} n^{2} = 15n^{2}$}
\psfrag{w02}{(ii) $2\binom{3}{1}  n^{2} = 6n^{2}$}
\psfrag{w01}{(i) $n^{3}$}
\psfrag{w03}{(iii) $8n$}
\begin{center}
\includegraphics[scale=0.30]{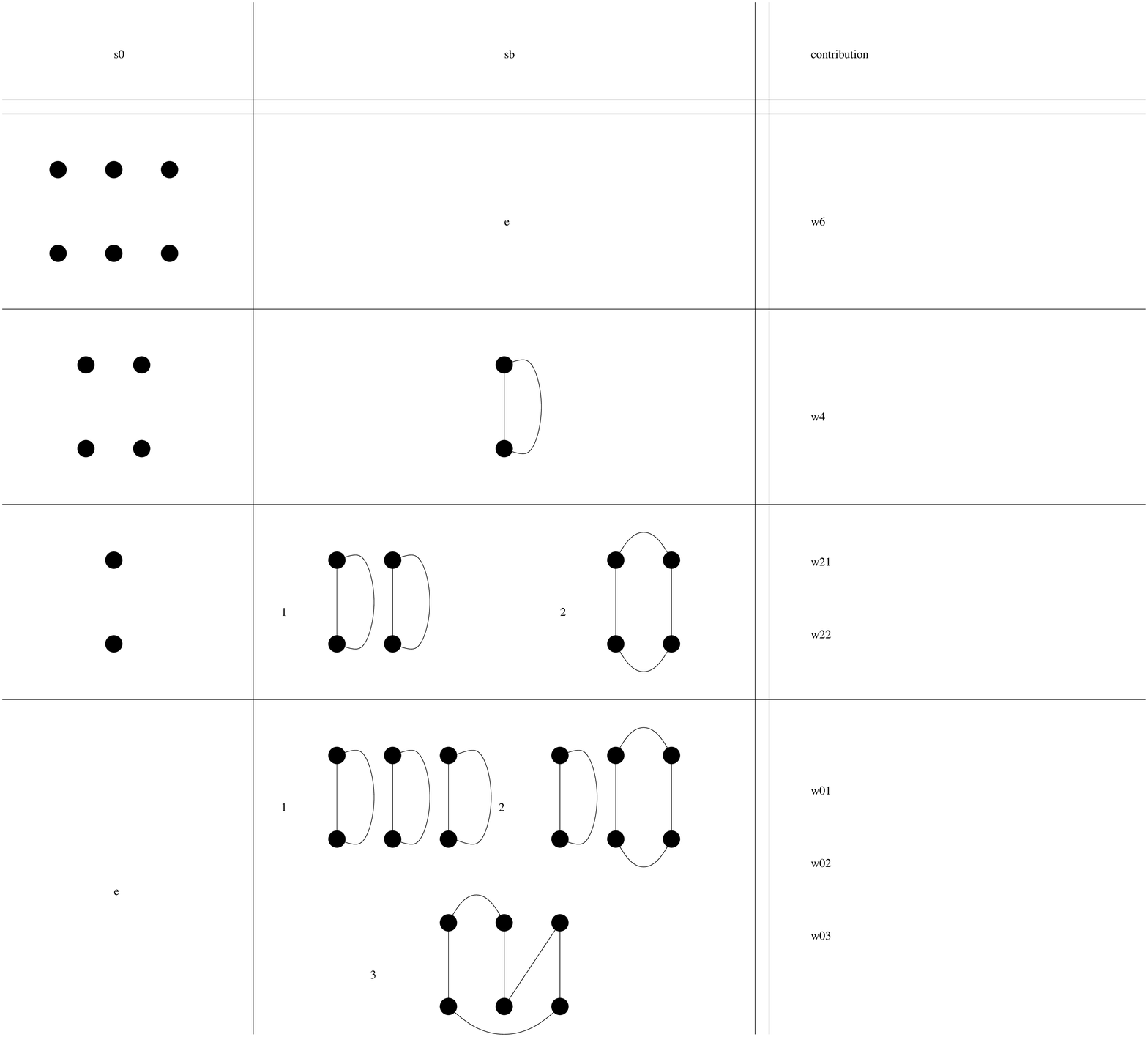}
\end{center}
\caption{Computing $\ipL{\Tr \x^{6}}.$ The result is $n^3 + 21n^2 + 83n + 15$.\label{fig:tracesix}}
\end{figure}

\subsection{}
For $k$ even let $C_{\LL} (n,k)$ be the polynomial \eqref{eq:sf:expectation}.
The polynomials $C_{\LL} (n,k)$ can be seen in Table
\ref{tab:polynomials}.  It is clear that they are monic, and the
constant terms are the Wick numbers.  The coefficients of
the codegree one terms
\[
1,8,21,40,65,96,\dotsc 
\]
are the \emph{octagonal numbers} \cite[\texttt{A000567}]{oeis}; these
are the analogue of the triangular numbers, in which one arranges dots
in an octagon (Figure \ref{fig:octagonal}).  This can be seen as
follows.  Write $S = S_{0} \cup S_{b}$ and suppose $|S|=k$ with $k=2m$
even.  There are two ways a paired barbell structure can contribute to
this degree.  Either $S_{0} = \emptyset$ or $|S_0| = 2$.  In the
former case we have $m$ barbells, and we must choose two of them to
pair into a connected component (the other $m-1$ must be paired to
themselves).  There are two pairings giving one connected component,
so there are $\binom{m}{2}\cdot 2$ paired barbell structures of this
type.  In the latter case we have to pick which two points will go to
$S_{0}$, so there are $\binom{2m}{2}$ paired barbell structures of
this type.  Hence altogether this coefficient is
\begin{equation}\label{eq:codegree1}
\binom{m}{2}\cdot 2 + \binom{2m}{2}.
\end{equation}
Now in Figure \ref{fig:octagonal} we can shave off two triangles to
yield a hexagon as in Figure \ref{fig:shave}.  Since the hexagonal
number is well known to be $\binom{2m}{2}$, this gives
\eqref{eq:codegree1}.  Apart from these sequences of coefficients, not
much else seems to be known about these polynomials.

\begin{figure}[htb]
\centering
\subfigure[The first three octagonal numbers\label{fig:octagonal}]{\includegraphics[scale=0.25]{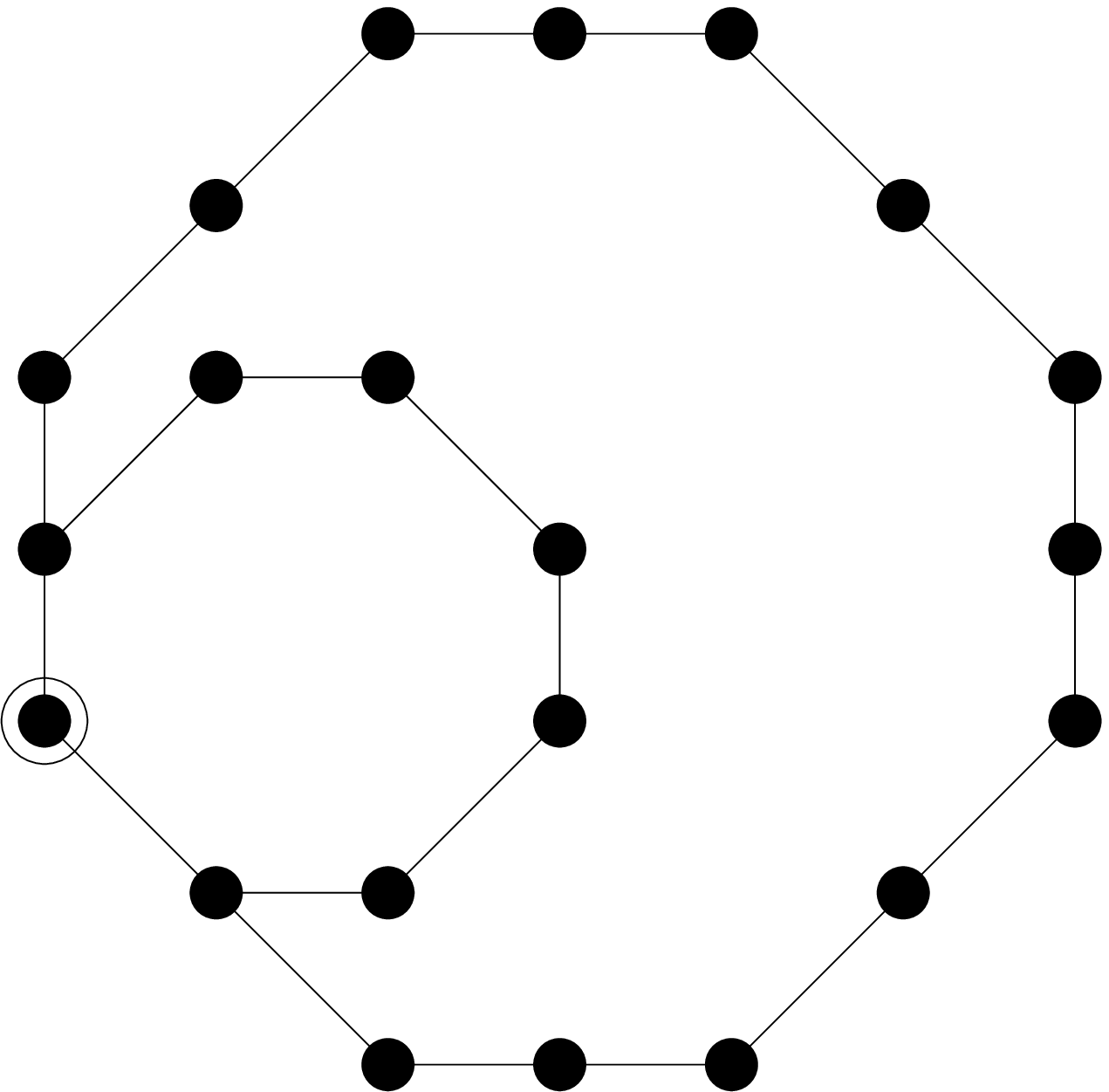}}
\quad\quad\quad\quad\quad\quad
\subfigure[Shaving the octagon\label{fig:shave}]{\includegraphics[scale=0.25]{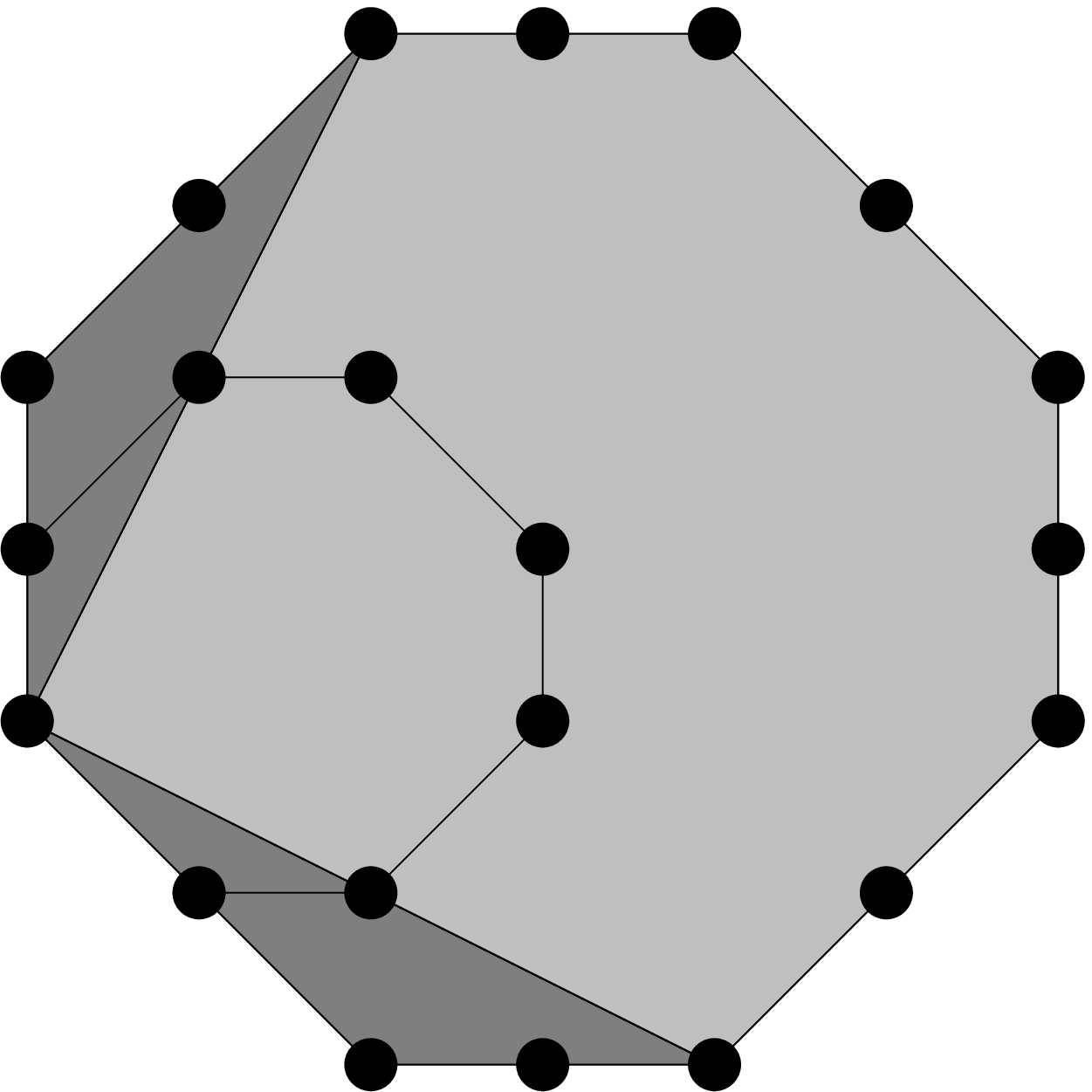}}
\caption{\label{fig:octashave}}
\end{figure}

\subsection{}\label{ss:gfs} 
We conclude this section by showing how the polynomials $C_{\LL}
(n,k)$ can be easily computed using standard techniques of generating
functions.  First, we define the \emph{even Wick numbers} $w_{e} (k)$
by $w_{e} (k) = 0$ if $k$ is odd, and $k!!  := k(k-2) (k-4)\dotsb 2$
if $k$ is even.  Given $k$ barbells, there are $w_{e} (k)$
pairings of the ends that yield one connected component.  Then the
exponential power series
\begin{align*}
A &= \exp (\sum_{k\geq 1} w_{e} (2k) n \frac{x^{k}}{k!}) \\
&= 1
 + n x
 + \bigl(\frac{n^{2}}{2}
 + n\bigr)  x^2
 + \bigl(\frac{n^{3}}{6}
 + n^2
 + \frac{4n}{3} \bigr)  x^3
 + \bigl(\frac{n^{4}}{24}
 + \frac{n^{3}}{2}
 + \frac{11n^{2}}{6}
 + 2 n\bigr)  x^4
 + \dotsb 
\end{align*}
gives the generating function of all possible barbell pairings.  

To get the polynomials in Table \ref{tab:polynomials}, we need to
complete the paired barbells into paired barbell structures.  As a
first step we need to tweak $A$.  Let $A_{l}$ be the result of
applying the series Laplace transform to $A$,\footnote{This transform
takes $\sum a_{k}x^{k}/k!$ to $\sum a_{k}x^{k}$.} and let $A'$ be the
result of replacing $x$ with $x^{2}$ in $A_{l}$ and then convolving
with the exponential series.  The result is
\[
A' = 1
 + \frac{n}{2}x^2
 + \bigl(\frac{n^{2}}{24}
 + \frac{n}{12} \bigr)  x^4
 + \bigl(\frac{n^{3}}{720}
 + \frac{n^{2}}{120}
 + \frac{n}{90} \bigr)  x^6+ \dotsb.
\]
This gives the same data as the exponential series $A$ for the barbell
pairings, but now the polynomials are placed in the correct (even)
degrees.

Finally we can incorporate the pairings in $S_{0}$.  Let $B$ be the
exponential generating function of the Wick numbers $w (k)$:
\begin{align*}
B &= \sum_{k\geq 0} w (k)\frac{x^{k}}{k!}\\
&=1
 + \frac{1}{2} x^2
 + \frac{1}{8} x^4
 + \frac{1}{48} x^6
 + \frac{1}{384} x^8
 + \frac{1}{3840} x^{10}+ \dotsb .
\end{align*}
The product $A'\cdot B$ then gives all ways to break up $S$ into
$S_{0} \cup S_{b}$ and to pair, along with the data of the number of
connected components obtained in $S_{b}$.  If we take the Laplace
transform of $A'\cdot B$, we obtain the ordinary generating function
of the polynomials $C_{\LL} (n,k)$:
\begin{equation}\label{eq:sf:Cgenfn}
(A'\cdot B)_{l} = 1
 + \left(n
 + 1\right)  x^2
 + \left(n^2
 + 8 n
 + 3\right)  x^4
 + \left(n^3
 + 21 n^2
 + 83 n
 + 15\right)  x^6
 + \dotsb 
\end{equation}
\begin{table}[htb]
\begin{center}
\begin{tabular}{|c||p{400pt}|}
\hline
$k$&$C_{\LL} (n,k)$\\
\hline\hline
0&$1$\\
2&$n + 1$\\
4&$n^2 + 8n + 3$\\
6&$n^3 + 21n^2 + 83n + 15$\\
8&$n^4 + 40n^3 + 422n^2 + 1112n + 105$\\
10&$n^5 + 65n^4 + 1310n^3 + 9310n^2 + 18609n + 945$\\
12&$n^6 + 96 n^5 + 3145 n^4 + 42720 n^3 + 231259 n^2 + 377664 n +
10395$
\\
14&$n^7 + 133 n^6 + 6433 n^5 + 141925 n^4 + 1466059 n^3 + 6476407 n^2
+ 9071187 n + 135135$
\\
16&$n^8 + 176 n^7 + 11788 n^6 + 383600 n^5 + 6424054 n^4 + 53966864
n^3 + 203378412 n^2 + 252726480 n + 2027025$\\
18&$n^9 + 225 n^8 + 19932 n^7 + 897372 n^6 + 22132614 n^5 + 300621510
n^4 + 2144046428 n^3 + 7109593308 n^2 + 8031454785 n + 34459425 $\\
20&$n^{10} + 280 n^9 + 31695 n^8 + 1885920 n^7 + 64273818 n^6 +
1283152080 n^5 + 14746708430 n^4 + 92004426080 n^3 + 274591498581 n^2
+ 287095866840 n + 654729075 $\\
 \hline
\end{tabular}
\end{center}
\medskip
\caption{The expectations $\ipL{\Tr \x^{k}}$, as a function of $n$.\label{tab:polynomials}}
\end{table}

\section{Automorphisms and connected structures}\label{s:sf:auts}

\subsection{}\label{ss:barbellgroup}
Consider the generating function \eqref{eq:sf:Cgenfn} of the
polynomials $C_{\LL} (n,k)$:
\begin{equation}\label{eq:sf:Cgenfn2}
 1 + \left(n + 1\right) x^2 + \left(n^2 + 8 n + 3\right) x^4 +
\left(n^3 + 21 n^2 + 83 n + 15\right) x^6 + \dotsb
\end{equation}
In this section we modify \eqref{eq:sf:Cgenfn2} by taking into account
symmetries of barbell diagrams.  Let $k = 2m$ be even.  Let $\Xi_{k}$
be the set $\{1,\dotsc ,k \}$ with barbells drawn between the pairs
$\{1,2 \}$, $\{3,4 \}$, \dots, $\{k-1,k \}$.  Let $B_{m}$ be the
wreath product $\ZZ /2\ZZ \wr S_{m}$ of order $2^{m}m!$ We call
$B_{m}$ the \emph{barbell group}, and think of it as acting on
$\Xi_{m}$ by permuting the barbells and flipping them independently.
Thus if we label the points in $\Xi_{k}$ as above, then we can
identify $B_{m}$ with the subgroup of $S_{k}$ generated by the
transpositions $(1,2)$, $(3,4)$, \dots , $(k-1,k)$ and the products
$(1,3) (2,4)$, $(3,5) (4,6)$, \dots , $(k-2,k) (k-3,k-1)$ (see Figure
\ref{fig:barbellgroup}).

\begin{figure}[htb]
\psfrag{1}{$1$}
\psfrag{2}{$2$}
\psfrag{3}{$3$}
\psfrag{4}{$4$}
\psfrag{5}{$5$}
\psfrag{6}{$6$}
\psfrag{km1}{$k-1$}
\psfrag{k}{$k$}
\psfrag{d}{$\dotsb $}
\begin{center}
\includegraphics[scale=0.25]{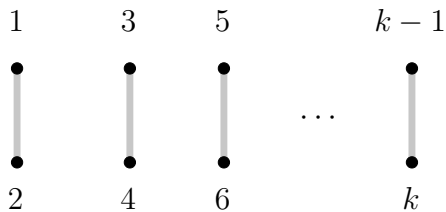}
\end{center}
\caption{The barbell group $B_{m}$, where $m=k/2$, permutes the barbells and flips
them independently.\label{fig:barbellgroup}}
\end{figure}

Our goal is to give a combinatorial meaning to the modified
generating function
\begin{equation}\label{eq:modified}
 B (x) = 1 + \frac{1}{2}\left(n + 1\right) x^2 + \frac{1}{8}\left(n^2
+ 8 n + 3\right) x^4 + \frac{1}{48}\left(n^3 + 21 n^2 + 83 n +
15\right) x^6 + \dotsb,
\end{equation}
in which each polynomial $C_{\LL} (n,k)$ is divided by the order of
$B_{k/2}$.  In particular, we want to express the coefficient of
$x^{k}$ as a sum over various paired configurations of $k$ barbells up
to isomorphism, where each configuration is weighted by the inverse of
the order of its automorphism group.  This is analogous to the usual
Feynman calculus, which expresses coefficients of certain power series
as sums over certain graphs weighted by the inverses of the orders of
their automorphism groups.  As a simple example, consider the power series
(cf.~\eqref{eq:realpairing})
\begin{equation}\label{eq:fseries}
\ip{\exp (tx^{4}/4!)}_{\RR} = 1 + c_{1}t + c_{2}t^{2} + \dotsb .
\end{equation}
Then we have
\[
c_{j} = \sum_{\substack{\Gamma \in G_{j}}} \frac{1}{|\Aut \Gamma |},
\]
where the sum is taken over all graphs with $j$ vertices of degree
$4$, and where the automorphisms are induced by permuting vertices and
edges (including flips of loops).  For instance, $c_{1} = 1/8$ and $c_{2} =
35/384$ (cf.~Figure \ref{fig:fourgraphs}).  For more details, we refer
to \cite[\S 3.2]{etingof} and \cite[Ch. 9]{mirror}.

\begin{figure}[htb]
\psfrag{eight}{$8$}
\psfrag{fortyeight}{$48$}
\psfrag{onetwentyeight}{$128$}
\psfrag{sixteen}{$16$}
\begin{center}
\includegraphics[scale=0.25]{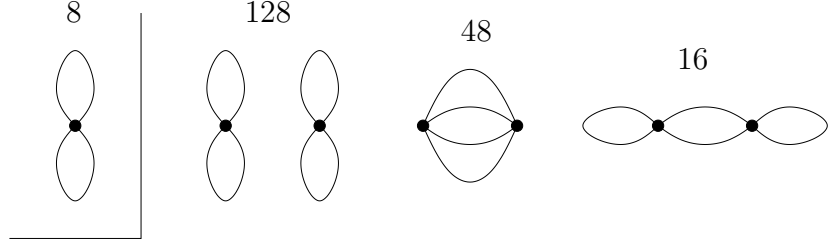}
\end{center}
\caption{Graphs and their numbers of automorphisms used to compute
$c_{1}=1/8$ and $c_{2} = 35/384$ in
\eqref{eq:fseries}.\label{fig:fourgraphs}}
\end{figure}

\subsection{} To carry this out, we give a slightly different model
for the terms contributing to the expectation.  We define a
\emph{barbell graph} to be a graph $\Gamma$ constructed as follows.
Begin with $\Xi_{k}$.  We partition the vertices into two sets
$S_{bl}$ and $S_{gr}$, where we color the vertices in $S_{bl}$
(respectively $S_{gr}$) black (respectively green).  We do this
arbitrarily; in particular the ends of a barbell need not be the same
color.  Then we make an arbitrary pairing of the ends of the barbells
that is compatible with the coloring.  This means we only pair black
to black and green to green; we never mix colors.  We say two barbell
graphs $ \Gamma, \Gamma ' $ are equivalent if we can carry $\Gamma$ to
$\Gamma '$ using the action of $B_{m}$.  Let $N (\Gamma)$ be the
number of connected components of $\Gamma$ that contain at least one
green vertex, and let $\Aut \Gamma \subset B_{m}$ be the subgroup of
automorphisms.

\begin{thm}\label{th:secondmodel}
Let $m=k/2$.  Then we have 
\begin{equation}\label{eq:orbits}
\frac{1}{|B_{m}|} C_{\LL} (n,k) = \sum_{\Gamma \in G (m)} \frac{n^{N (\Gamma)}}{|\Aut \Gamma |},
\end{equation}
where the $G (m)$ is the set of equivalence classes of barbell graphs
with $m$ barbells.
\end{thm}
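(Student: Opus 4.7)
The plan combines orbit-stabilizer with an identification of the unquotiented sum with $C_\LL(n, 2m)$. Let $X(m)$ denote the set of all barbell graphs on $m$ barbells (a vertex coloring together with color-respecting pairings), before passing to $B_m$-equivalence. The group $B_m$ acts on $X(m)$ by permuting and flipping barbells, and the quantity $N$ is $B_m$-invariant since it depends only on the isomorphism class of $\Gamma$. Orbit-stabilizer gives $|B_m\cdot \Gamma| = |B_m|/|\Aut \Gamma|$, so
\[
\sum_{x \in X(m)} n^{N(x)} = \sum_{[\Gamma] \in G(m)} \frac{|B_m|}{|\Aut \Gamma|}\, n^{N(\Gamma)}.
\]
Dividing by $|B_m|$ reduces the theorem to the identity $\sum_{x \in X(m)} n^{N(x)} = C_\LL(n, 2m)$.

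To prove this, I would group configurations by the induced vertex coloring $(S_{bl}, S_{gr})$ and establish the per-coloring identity
\[
\sum_{\pi_{bl}, \pi_{gr}} n^{N(\Gamma)} = w(|S_{bl}|) \cdot \ipL{|x|^{|S_{gr}|}}.
\]
Granting this, summing over all colorings and invoking Proposition \ref{prop:tracepolys} together with the independence of $x_0$ and $x$ under the Gaussian measure yields
\[
\sum_{x \in X(m)} n^{N(x)} = \sum_{l=0}^{m} \binom{2m}{2l}\, w(2m-2l)\, \ipL{|x|^{2l}} = \ipL{\Tr \x^{2m}} = C_\LL(n, 2m).
\]

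The essential content of the per-coloring identity is that the inner sum depends only on $|S_{gr}|$ and not on which specific vertices are colored green. In the special case $S_{gr} = \{1, \dotsc, |S_{gr}|\}$ no $\Xi_{2m}$-barbell crosses colors, the full graph splits into independent green and black subgraphs, $N(\Gamma)$ reduces to the old-model count of Theorem \ref{th:sf:expectation}, and the identity is immediate (the $\pi_{bl}$-sum gives $w(|S_{bl}|)$ and the $\pi_{gr}$-sum recovers the Wick expansion of $\ipL{|x|^{|S_{gr}|}}$). For general $S_{gr}$ the matching is not term-by-term, since a cross-color barbell can route a green cycle through black vertices and make $N(\Gamma)$ depend nontrivially on $\pi_{bl}$; this is the technical heart of the argument. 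I expect to establish invariance of the inner sum on the position of $S_{gr}$ via a Wick-type reformulation using the Clifford-algebra embedding $\LL \hookrightarrow \Mat_{2^n}(\RR)$ from $\S\ref{s:sf:background}$, $\x \mapsto x_0 I + \sum_p x_p \sigma_p$: expanding $\Tr_{\Mat}((x_0 I + \sum_p x_p \sigma_p)^{2m})$ and applying Wick to the Gaussian variables, the anti-commutativity $\sigma_p \sigma_q = -\sigma_q \sigma_p$ should produce signs and cycle contractions that reorganize the Wick contractions into exactly the new-model weighting, independent of how the cross-color barbells are distributed among the $\Xi_{2m}$-barbells.
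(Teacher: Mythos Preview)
Your reduction via orbit--stabilizer to the unquotiented identity $\sum_{x\in X(m)} n^{N(x)} = C_{\LL}(n,2m)$ is exactly the paper's first step, and your decomposition by coloring is also the right move. Where you diverge is in the per-coloring identity, which you flag as the technical heart and propose to attack via the Clifford embedding. That machinery is unnecessary: the paper dispatches this step with an elementary degree count.

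Here is the observation you are missing. Fix a coloring $(S_{bl},S_{gr})$ and a pairing $\pi_{bl}$ of the black vertices. In the resulting graph every black vertex has degree~$2$ (one barbell edge, one pairing edge) and every green vertex has degree~$1$ (just its barbell edge). Hence the graph is a disjoint union of all-black cycles together with open paths whose two endpoints are green and whose interior vertices are black. Since the green vertices are exactly the path endpoints, there are precisely $|S_{gr}|/2$ such paths, \emph{independent of $\pi_{bl}$ and of which barbells cross colors}. Now the green pairing $\pi_{gr}$ joins these $|S_{gr}|/2$ paths end-to-end into cycles, and $N(\Gamma)$ is the number of cycles so formed. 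But pairing the endpoints of $|S_{gr}|/2$ abstract segments is combinatorially identical to pairing the ends of $|S_{gr}|/2$ barbells as in the first model, so $\sum_{\pi_{gr}} n^{N(\Gamma)} = P(n)$ for every choice of $\pi_{bl}$. Summing over the $w(|S_{bl}|)$ choices of $\pi_{bl}$ gives your per-coloring identity immediately.

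So your proof sketch is on the right track but leaves the central step as a conjecture backed by a heavy proposed tool; the actual argument is a two-line combinatorial observation about vertex degrees that you should substitute in.
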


\begin{proof}
Let $G^{*} (m)$ be the set of all barbell graphs with $m$ barbells,
without modding out by the action of $B_{m}$.  We will show
\begin{equation}\label{eq:noauts}
C_{\LL} (n,k) = \sum_{\Gamma \in G^{*} (m)} n^{N (\Gamma)},
\end{equation}
which implies \eqref{eq:orbits}.  Indeed, by definition the elements
of $G (m)$ are the orbits of $B_{m}$ in $G^{*} (m)$, and thus
\eqref{eq:orbits} follows from the orbit-stabilizer formula.  To prove
\eqref{eq:noauts}, we will show that the total contribution from the
barbell graphs $G^{*} (m)$ agrees with that from the paired barbell
structures in \S \ref{ss:bbs}.

Thus let $\beta = S_{0}\cup S_{b}$ be a barbell structure on
$\{1,\dotsc ,k \}$, and consider the fixed collection of barbells
$\Xi_{m}$ as in Figure~\ref{fig:barbellgroup}.  When we compute the
contribution of all pairings $\pi$ of $\beta$, the result has the form
$w P (n)$, where $w$ is the Wick number $w (|S_{0}|)$ and $P (n)$ is a
polynomial in $n$ of degree $|S_{b}|/2$.  We claim $\beta$ tells us
how to build a collection of barbell graphs giving the same total
contribution, and that by varying $\beta$ we obtain all graphs in
$G^{*} (m)$.

First, the partition $\beta$ tells us how to color the vertices in
$\Xi_{m}$: we color those with labels in $S_{0}$
(respectively, $S_{b}$) black (resp., green).  This determines the
sets $S_{bl}$ and $S_{gr}$.  Next, choose an arbitrary pairing of
$S_{bl}$.  We claim, once this pairing is fixed, that after adding
together the contributions coming from all pairings in $S_{gr}$ one
obtains $P (n)$.  This proves the result, since there are $w$ possible
choices of pairings in $S_{bl}$.

So let $\Gamma$ be the union of path graphs formed after pairing the
vertices in $S_{bl}$.  The connected components of $\Gamma$ either
have black or green endpoints.  The components with black
endpoints are irrelevant and can be ignored.  Those with green
endpoints either have no internal vertices or have all internal
vertices black.  Since there are $|S_{gr}|/2$ connected components
with green vertices, the contribution after all pairings of $S_{gr}$
are formed will be $P (n)$.  This shows that either model, barbell
structures or barbell graphs, produces the same expectation $\ipL{\Tr
\x^{k}}$, and completes the proof.
\end{proof}

\subsection{} The series $B (x)$ counts all the barbell pairings
divided by the orders of their automorphism group.  Just as in the
usual Feynman calculus, one can simplfy the computation by reducing to
the connected diagrams, since typically there are far fewer connected
than general diagrams.  In other words, one considers the generating
function
\[
B_{c} (x) := \log B (x).
\]
It turns out that this series has a particularly simple form:

\begin{thm}\label{th:sf:connected}
We have 
\begin{align*}
B_{c} (x)  &= \frac{1}{2} (n+1)x^{2} + \frac{1}{4} (3n+1)x^{4} + \frac{1}{6}
 (7n+1)x^{6} + \frac{1}{8} (15n+1)x^{8} + \dotsb \\
 &= \sum_{m\geq 1} \frac{1}{2m} ((2^{m}-1)n +1) x^{2m}.
\end{align*}
\end{thm}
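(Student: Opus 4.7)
The plan is to apply the exponential formula to Theorem \ref{th:secondmodel}. Setting $a_m := C_{\LL}(n,2m) = \sum_{\Gamma \in G^*(m)} n^{N(\Gamma)}$, the weighted count of labeled barbell graphs with $m$ barbells, the substitution $y = x^2/2$ identifies $B(x)$ with the exponential generating function $\sum_m a_m y^m/m!$. Every barbell graph decomposes uniquely into connected components, and the weight factors as $n^{N(\Gamma)} = \prod_i n^{N(\Gamma_i)}$ because $N$ simply counts the components that meet the green subset. The usual exponential formula for labeled species therefore yields
\[
B(x) = \exp(B_c(x)), \qquad B_c(x) = \sum_{m \geq 1} \frac{c_m}{2^m m!}\, x^{2m},
\]
where $c_m$ is the analogous weighted count restricted to connected labeled barbell graphs with $m$ barbells. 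It thus suffices to prove $c_m = 2^{m-1}(m-1)!\bigl((2^m-1)n+1\bigr)$.

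Every vertex of a barbell graph lies on exactly one barbell and one pairing edge, hence has degree $2$; each connected component is therefore a cycle whose edges alternate between barbell and pairing, and a connected barbell graph on $m$ barbells is a single cycle of length $2m$. To enumerate such cycles I would record sequences $(v_1, v_2, \dotsc, v_{2m})$ of the $2m$ vertices in which $(v_{2i-1}, v_{2i})$ is a barbell and $(v_{2i}, v_{2i+1})$ (indices taken cyclically) is a pairing edge. Any such sequence is determined by picking a barbell and an orientation at each of the $m$ barbell positions, giving $m!\cdot 2^m$ sequences, while each cycle contributes exactly $2m$ valid sequences: one can start at any of the $2m$ vertices, and the direction of traversal is then forced by the requirement that the first edge be a barbell. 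Hence the number of connected cycle structures on $m$ labeled barbells is $(m-1)!\cdot 2^{m-1}$.

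For the coloring, the rule that pairings respect color groups the $2m$ vertices of such a cycle into $m$ ``color blocks'' of two pairing-adjacent vertices, and each block is independently black or green. Since the whole cycle is one connected component, $N(\Gamma) = 1$ when at least one block is green and $0$ otherwise, so summing $n^{N(\Gamma)}$ over the $2^m$ colorings gives $(2^m-1)n + 1$. Combining these counts yields $c_m = (m-1)!\cdot 2^{m-1}\cdot \bigl((2^m-1)n+1\bigr)$, and consequently the coefficient of $x^{2m}$ in $B_c(x)$ is
\[
\frac{c_m}{2^m m!} = \frac{(2^m-1)n + 1}{2m},
\]
which is the claimed identity. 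The step most likely to require care is the factor-$2m$ overcounting in the sequence-to-cycle map: one should verify that the $2m$ choices of starting vertex exhaust the distinct sequence representatives of a fixed cycle once we demand that the first edge be a barbell, and that reflections produce no further identifications under this constraint.
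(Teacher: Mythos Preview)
Your proof is correct. The approach is closely related to the paper's but differs in its framing. The paper works directly with the equivalence classes $G(m)$ of Theorem~\ref{th:secondmodel}: it observes that up to the $B_m$-action every connected barbell graph can be put into a fixed ``standard'' cycle, identifies the automorphism group $G$ of the uncolored standard cycle as the cyclic-plus-flip group of order $2m$, notes that the $2^m$ admissible colorings are indexed by subsets of the $m$ pairing-edges, and then obtains the coefficient $((2^m-1)n+1)/2m$ by orbit--stabilizer, dividing the total weight of the $2^m$ colorings by $|G|=2m$.

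You instead stay on the labeled side throughout: you invoke the exponential formula for the species of barbell graphs (using the substitution $y=x^2/2$ so that $|B_m|$ becomes $m!$), and then count \emph{labeled} connected cycles directly via the sequence/double-counting argument, obtaining $(m-1)!\,2^{m-1}$ uncolored cycles. This buys you an explicit justification of the ``$\log =$ connected'' step, which the paper simply imports from the usual Feynman calculus, and it avoids having to name the automorphism group of the standard cycle. The two appearances of $2m$ are of course the same fact seen from opposite sides of orbit--stabilizer: yours is the orbit size, the paper's is the stabilizer order. Your caveat about the $2m$-to-$1$ map is easily settled: once $v_1$ is chosen, $v_2$ is forced to be its unique barbell partner and the rest of the sequence is determined; the reversal $(v_{2m},\dots,v_1)$ is already the sequence with starting vertex $v_{2m}$, so the $2m$ starting vertices give exactly the $2m$ valid sequence representatives and no more.
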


\begin{proof}
First, using the barbell group we can carry any connected barbell
graph into a standard form: the pairings all connect the bottom of the
$i$th barbell to the top of the $(i+1)$st barbell for $1\leq i\leq m$.
We take these labels mod $m$, which means the bottom of the last
barbell is connected to the top of the first (cf.~Figure
\ref{fig:bb12}).  For the purposes of this proof, we will say that
such a connected barbell graph is \emph{standardly paired}.  If there
are no green vertices, then the automorphism group of this pairing has
order $2m$, and is the group $G\subset B_{m}$ generated by cyclic
permutation of the barbells and simultaneous flipping of all of them
about both axes.  This implies that the constant term of $x^{2m}$ is
$1/2m$.

Now we claim that the standardly paired connected barbell graphs with
$2m$ vertices are in bijection with subsets of $\{1,\dotsc, m \}$.
Indeed, let $I = \{i_{1},\dotsc ,i_{l} \}$ be a subset of $\{1,\dotsc
,m \}$.  Then we simply take the standardly paired barbell graph with
all vertices initially black, and color the bottom of barbell $i_{j}$
and the top of barbell $i_{j}+1$ green for $j=1,\dotsc ,l$.  This
graph clearly has $N (\Gamma) = 1$.  Since there are $2^{m}-1$ such
graphs with at least one green vertex, the result follows once we
divide out by the action of $G$.

\end{proof}

\begin{figure}[htb]
\begin{center}
\includegraphics[scale=0.25]{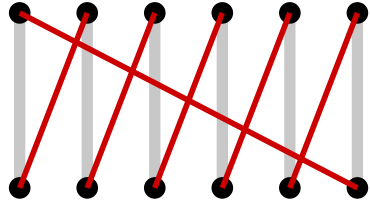}
\end{center}
\caption{A standardly paired connected barbell graph.\label{fig:bb12}}
\end{figure}

\begin{ex}\label{ex:trx6}
Figures \ref{fig:b60}--\ref{fig:b63} show the barbell graphs used in
the computation of the coefficient of $x^{6}$ in \eqref{eq:modified},
along with their contributions $n^{N (\Gamma)}/|\Aut \Gamma |$.  Thus
the graph $\Gamma$ in Figure \ref{fig:b60.1} has $|\Aut \Gamma | = 48$
and contributes to the constant term.  The connected graphs, which
contribute to the coefficient of $x^{6}$ in Theorem
\ref{th:sf:connected}, are indicated with a star $\star$.
\end{ex}

\begin{figure}[htb]
\centering
\subfigure[$1/48$\label{fig:b60.1}]{\includegraphics[scale=0.20]{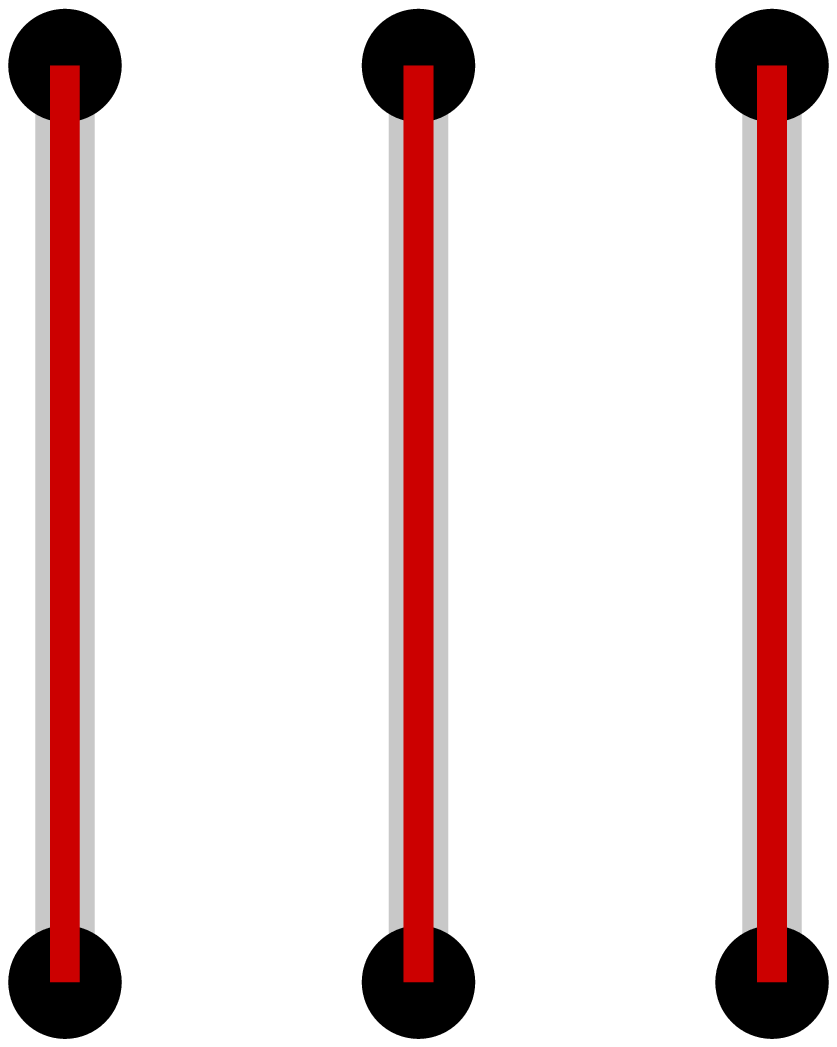}}
\quad\quad\quad 
\subfigure[$1/6$\,$\star$\label{fig:b60.2}]{\includegraphics[scale=0.20]{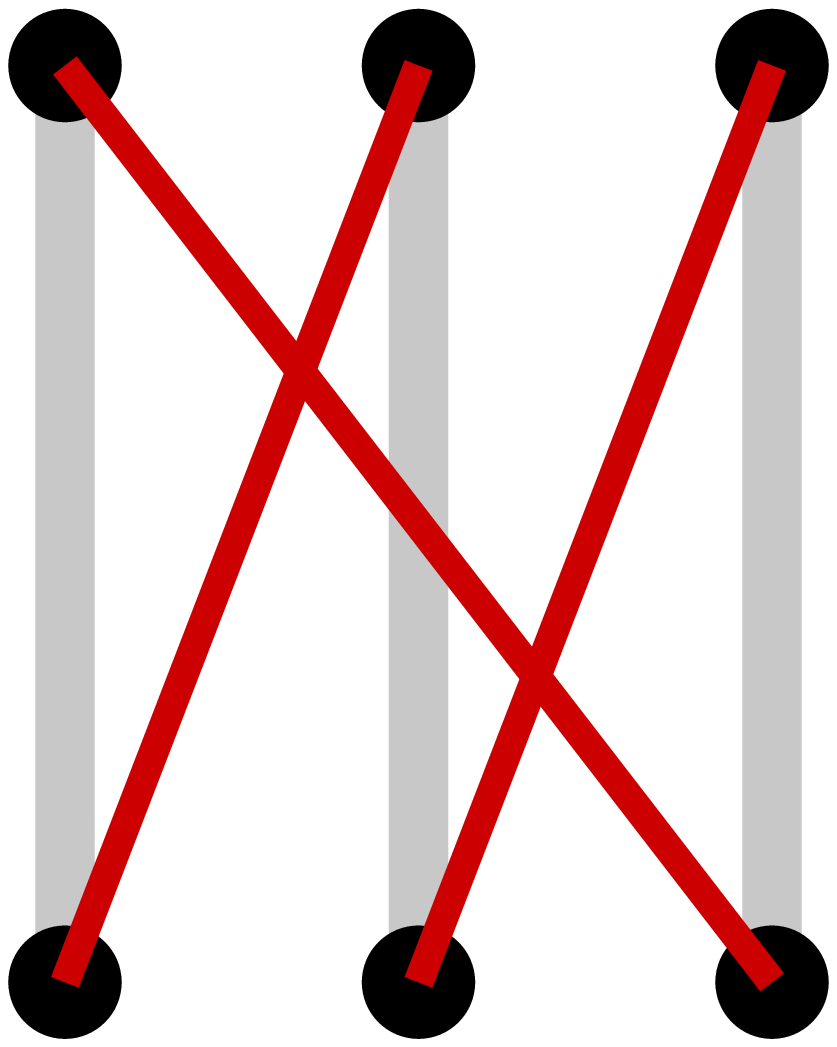}}
\quad\quad\quad 
\subfigure[$1/8$\label{fig:b60.3}]{\includegraphics[scale=0.20]{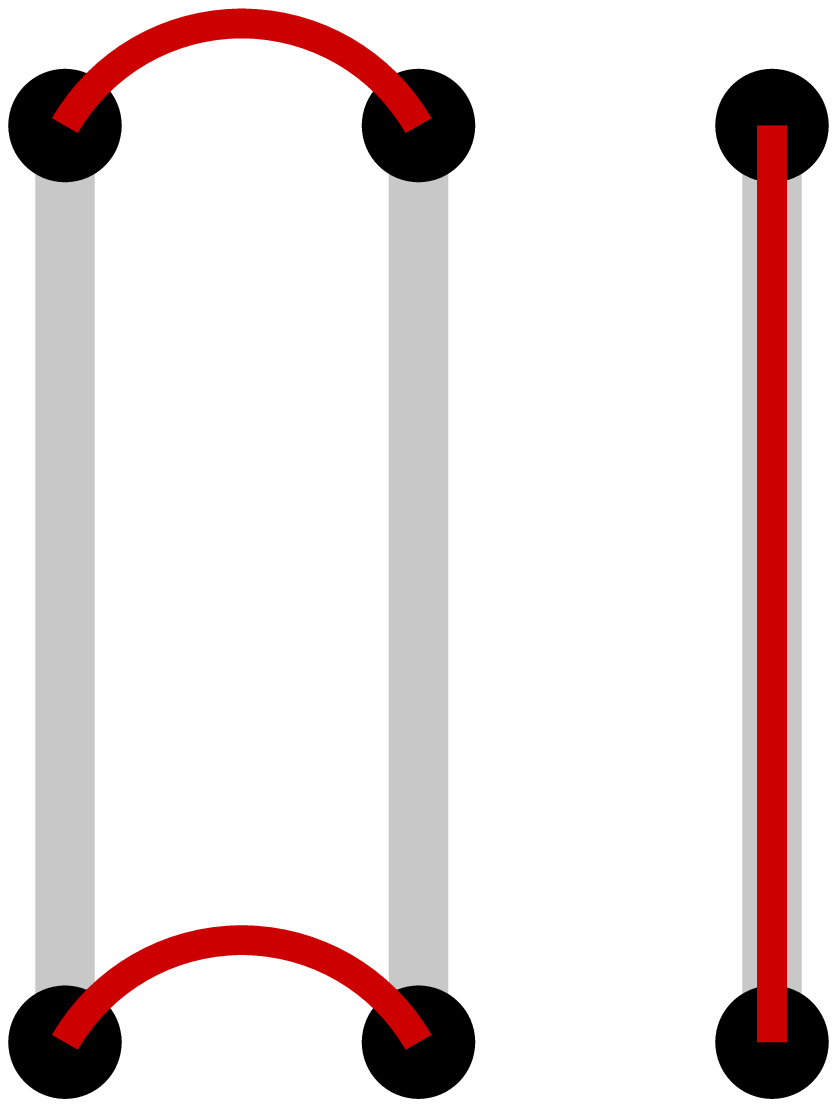}}
\caption{\label{fig:b60}}
\end{figure}

\begin{figure}[htb]
\centering
\subfigure[$n/16$\label{fig:b61.1}]{\includegraphics[scale=0.20]{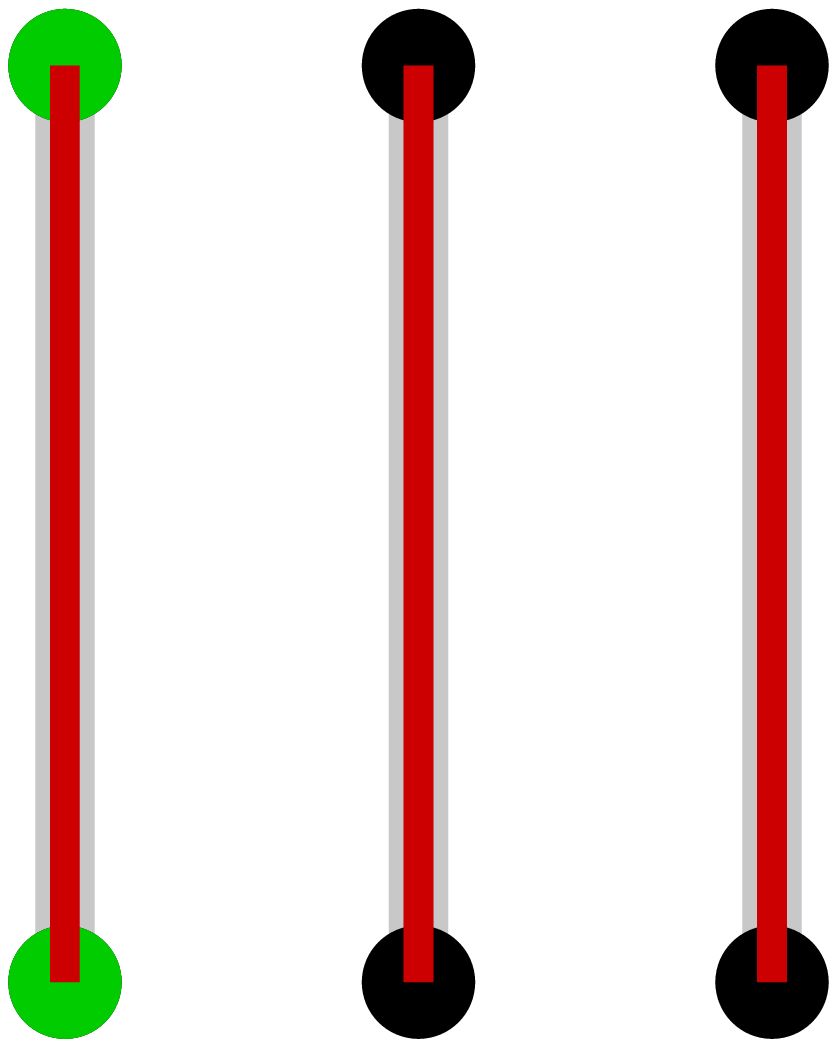}}
\quad\quad\quad 
\subfigure[$n/8$\label{fig:b61.2}]{\includegraphics[scale=0.20]{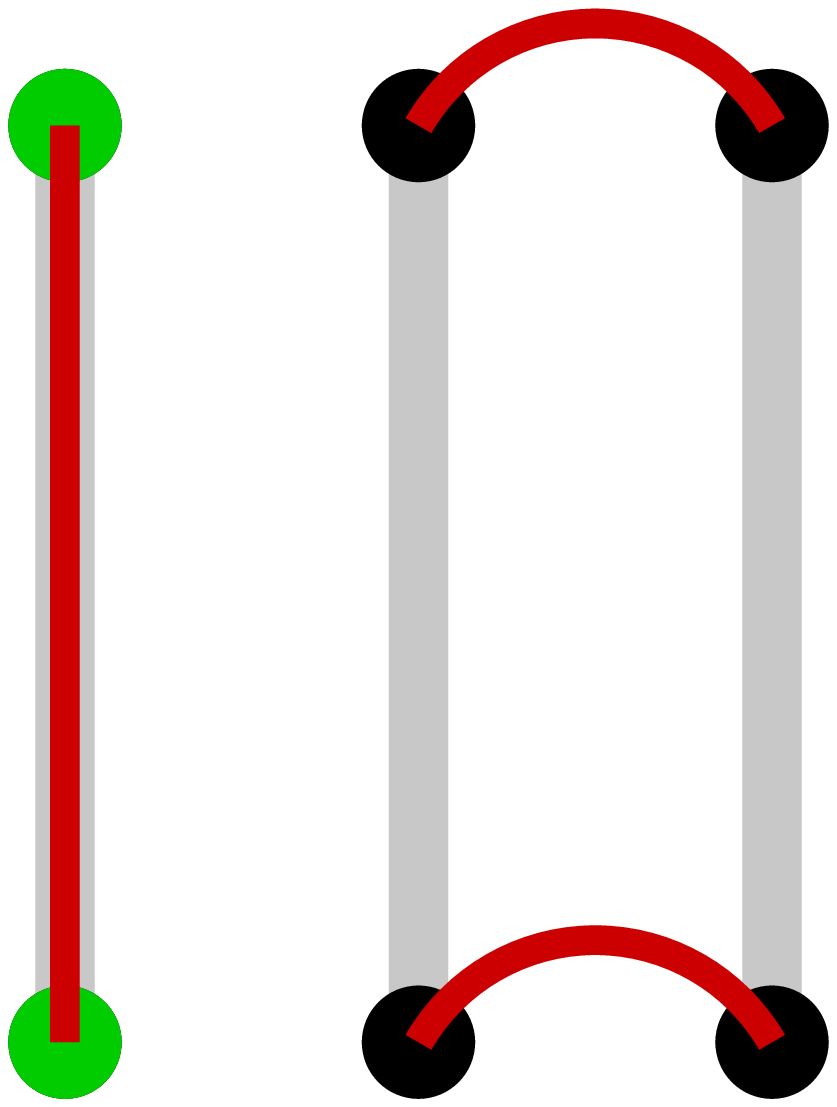}}
\quad\quad\quad 
\subfigure[$n/2$\,$\star$\label{fig:b61.3}]{\includegraphics[scale=0.20]{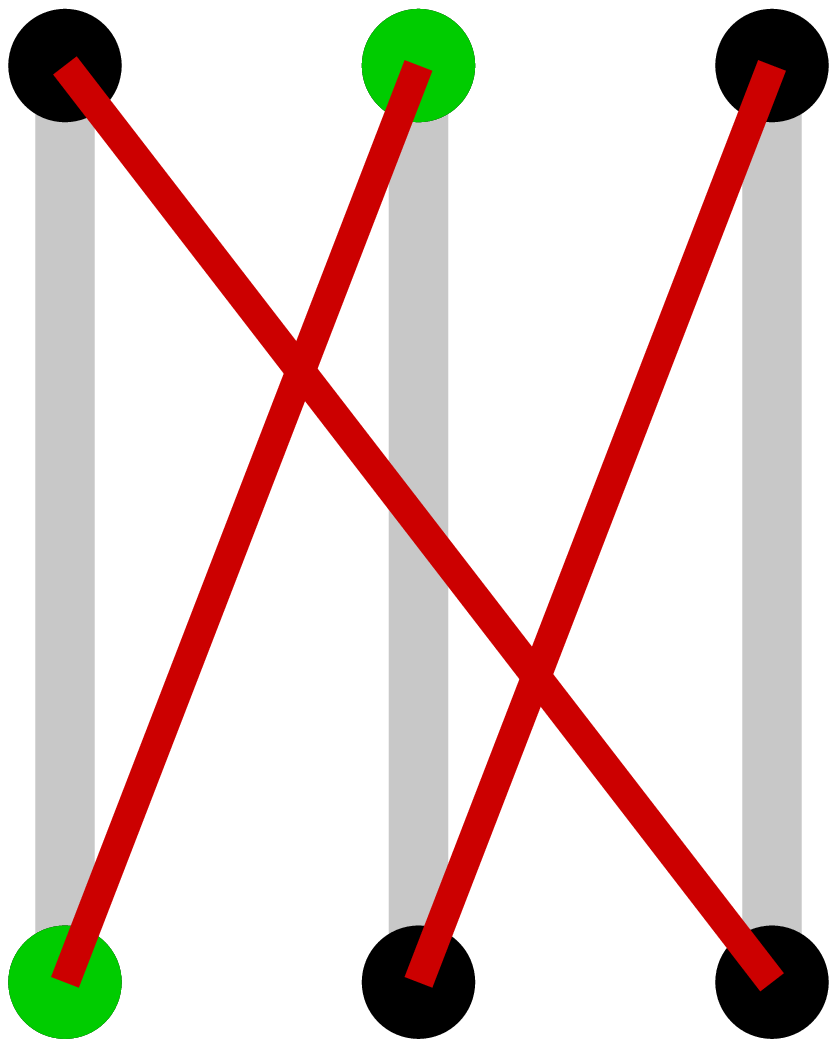}}
\quad\quad\quad 
\subfigure[$n/4$\label{fig:b61.4}]{\includegraphics[scale=0.20]{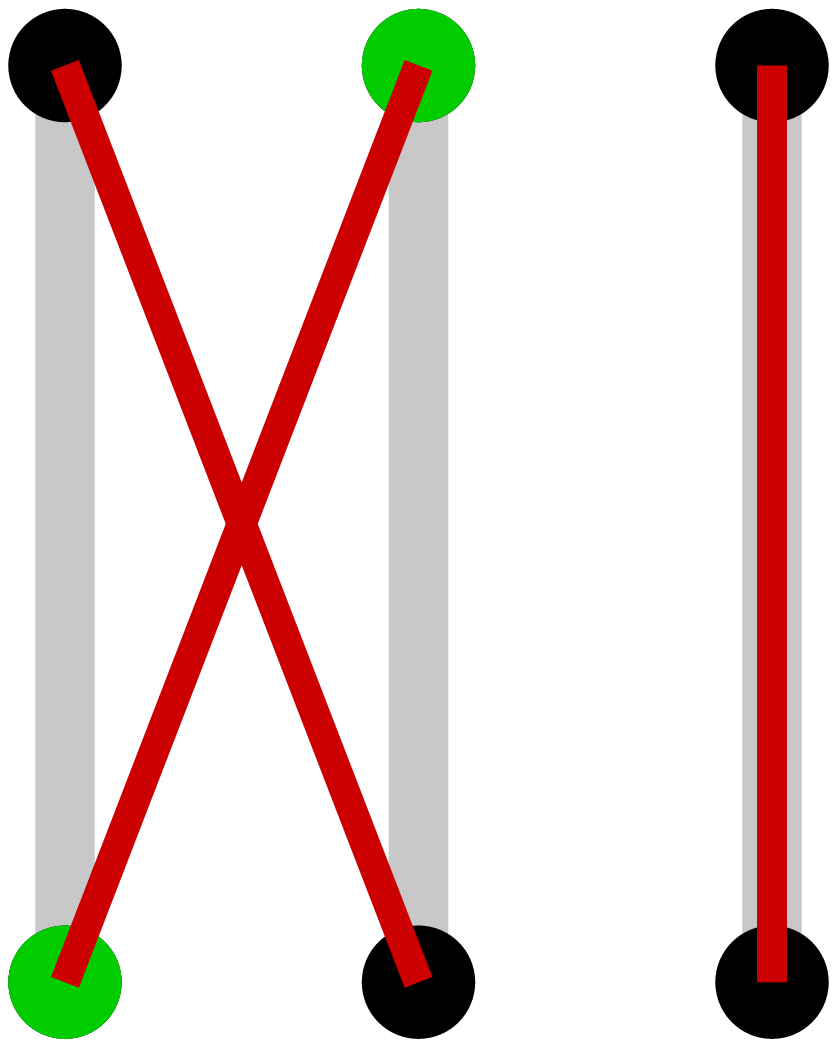}}
\caption{\label{fig:b61}}
\end{figure}

\begin{figure}[htb]
\centering
\subfigure[$n^{2}/16$\label{fig:b62.1}]{\includegraphics[scale=0.20]{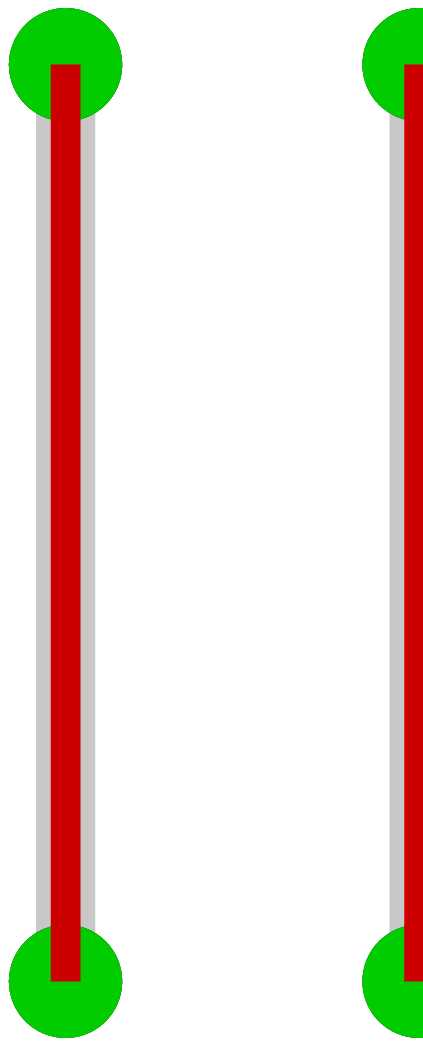}}
\quad\quad\quad 
\subfigure[$n/8$\label{fig:b62.2}]{\includegraphics[scale=0.20]{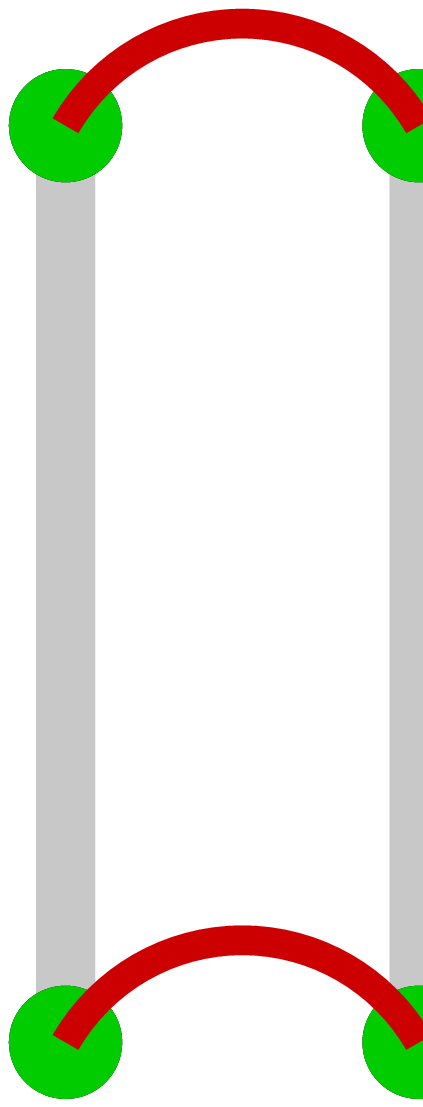}}
\quad\quad\quad 
\subfigure[$n/2$\,$\star$\label{fig:b62.3}]{\includegraphics[scale=0.20]{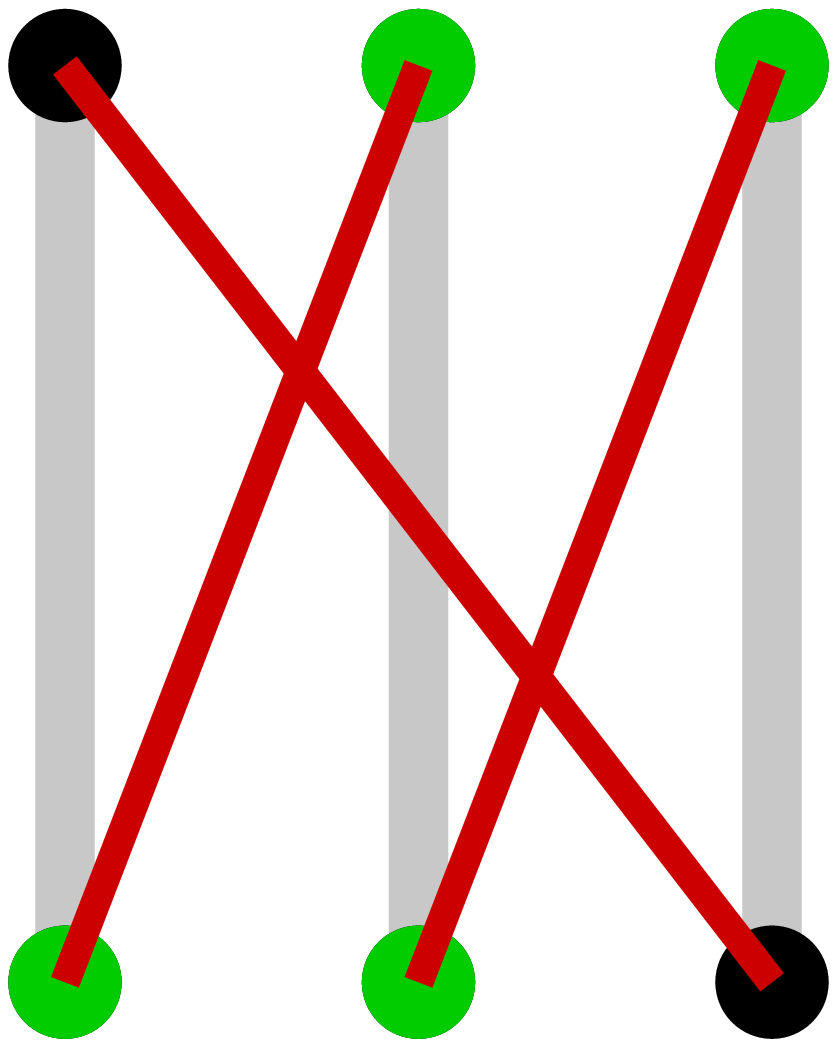}}
\quad\quad\quad 
\subfigure[$n^{2}/4$\label{fig:b62.4}]{\includegraphics[scale=0.20]{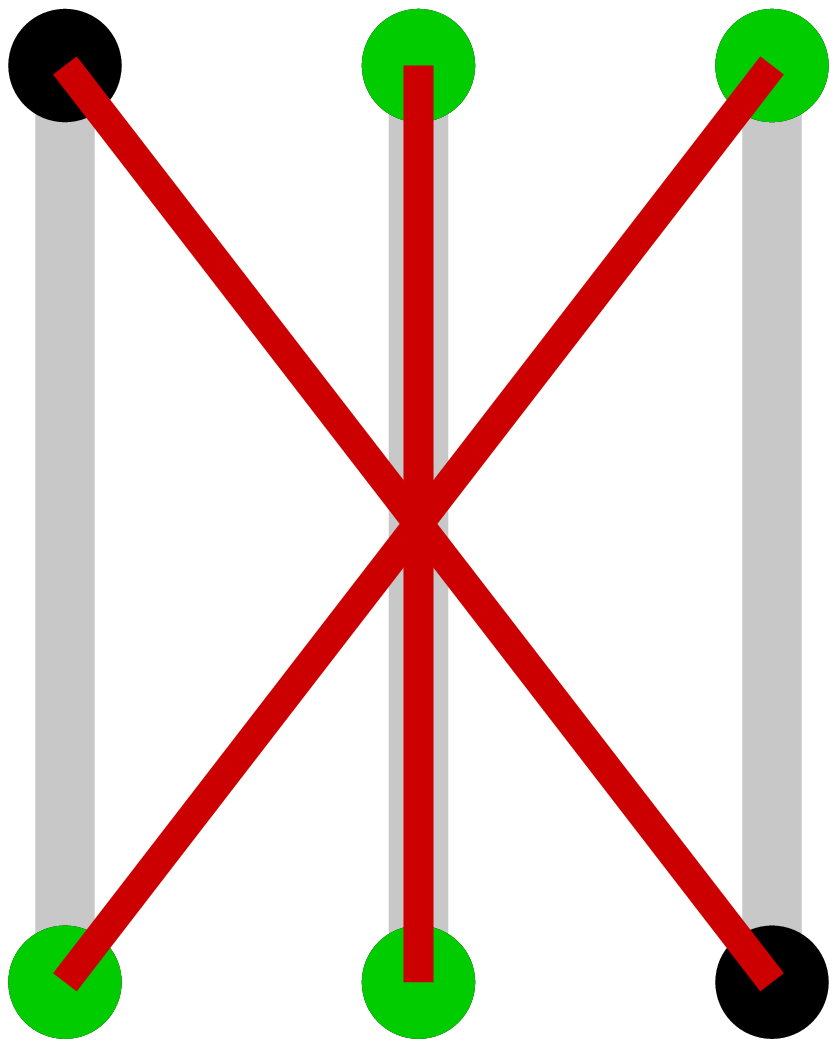}}
\caption{\label{fig:b62}}
\end{figure}

\begin{figure}[htb]
\centering
\subfigure[$n^{3}/48$\label{fig:b63.1}]{\includegraphics[scale=0.20]{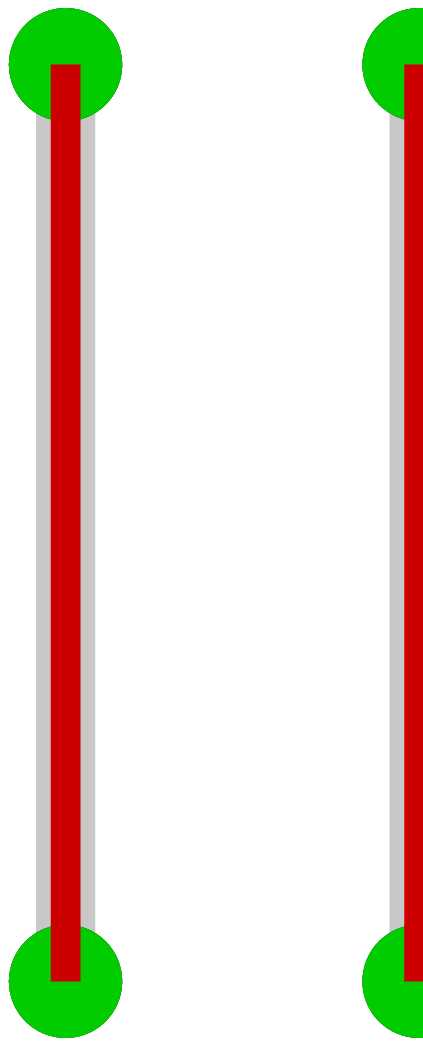}}
\quad\quad\quad 
\subfigure[$n/6$\,$\star$\label{fig:b63.2}]{\includegraphics[scale=0.20]{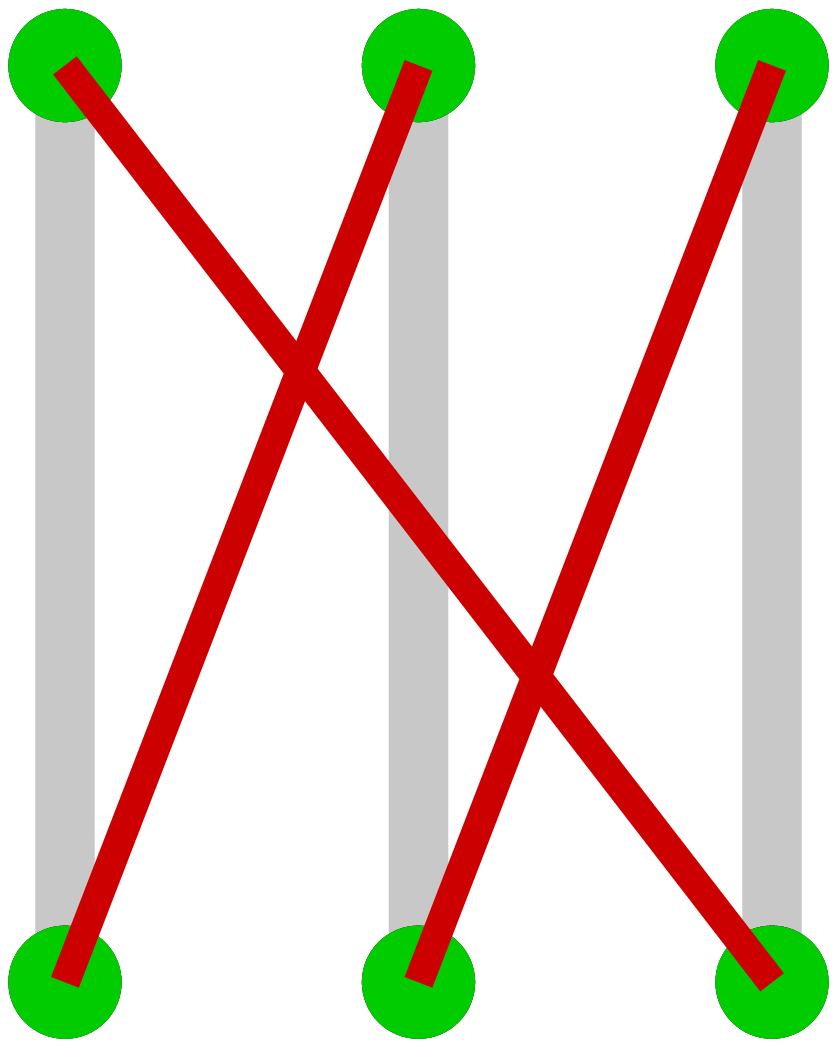}}
\quad\quad\quad 
\subfigure[$n^{2}/8$\label{fig:b63.3}]{\includegraphics[scale=0.20]{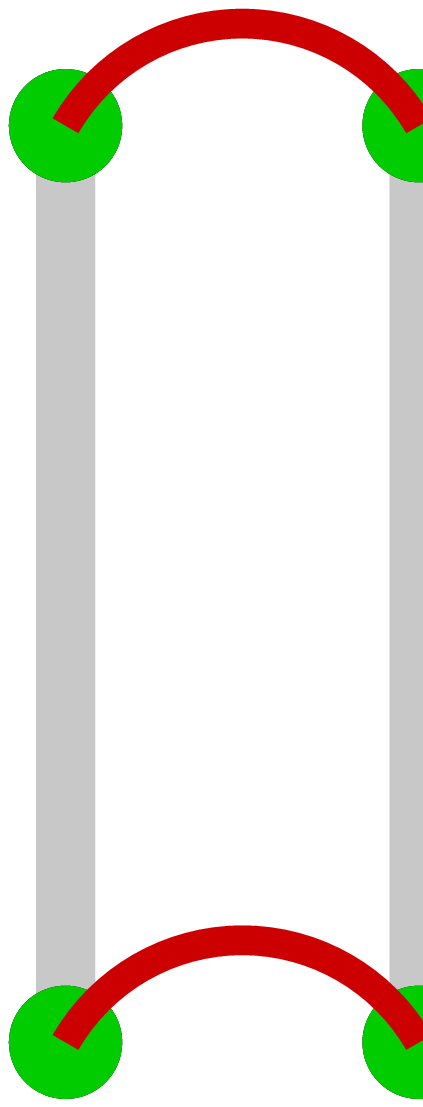}}
\caption{\label{fig:b63}}
\end{figure}

\section{The perturbation series}\label{s:sf:perturb} We conclude by
computing the pertubation series for the spin factor.  We use notation
from \S \ref{s:aa:perturb}.  In particular, let $t$ be an
indeterminate and let $g_{3}, g_{4},\dotsc$ be formal deformation
parameters, packaged together into a vector $\g =
(g_{3},g_{4},\dotsc)$.  Let $\m = (m_{3},m_{4},\dotsc )\in
\prod_{k\geq 3}\ZZ_{\geq 0}$ be a vector of multiplicities; we assume
$m_{k} = 0$ for all sufficiently large $k$.  We write $\g^{\m}$
for the monomial $\prod g_{k}^{m_{k}}$ and set  $N (\m) = \sum k m_{k}$.
We will also need to consider $\Xi_{k}$ and the barbell group for odd $k$.  Thus
for any $k$ define 
\[
m = m (k) = \begin{cases}
k/2& \text{$k$ even,}\\
(k-1)/2 & \text{$k$ odd.}
\end{cases}
\]
Let $M (k) = 2^{m}m!$ and let $B_{m}$ be the barbell group of order $M
(k)$.  For $k$ even we think of $B_{m}$ acting on a set of $k$ fixed
barbells as described in \S \ref{ss:barbellgroup}.  For $k$ odd we
let $\Xi_{k}$ be the set $\{1,\dotsc ,k \}$ with $(k-1)/2$ fixed barbells as in
Figure \ref{fig:barbellgroupodd}, and let $B_{m}$ act by fixing the
isolated point.  Eventually when building barbell graphs using copies
of $\Xi_{k}$, we will assume that the vertices
are always colored with $|S_{gr}|$ \emph{even}.

\begin{figure}[htb]
\psfrag{0}{$1$}
\psfrag{1}{$2$}
\psfrag{2}{$3$}
\psfrag{3}{$4$}
\psfrag{4}{$5$}
\psfrag{5}{$6$}
\psfrag{6}{$7$}
\psfrag{km1}{$k-1$}
\psfrag{k}{$k$}
\psfrag{d}{$\dotsb $}
\begin{center}
\includegraphics[scale=0.25]{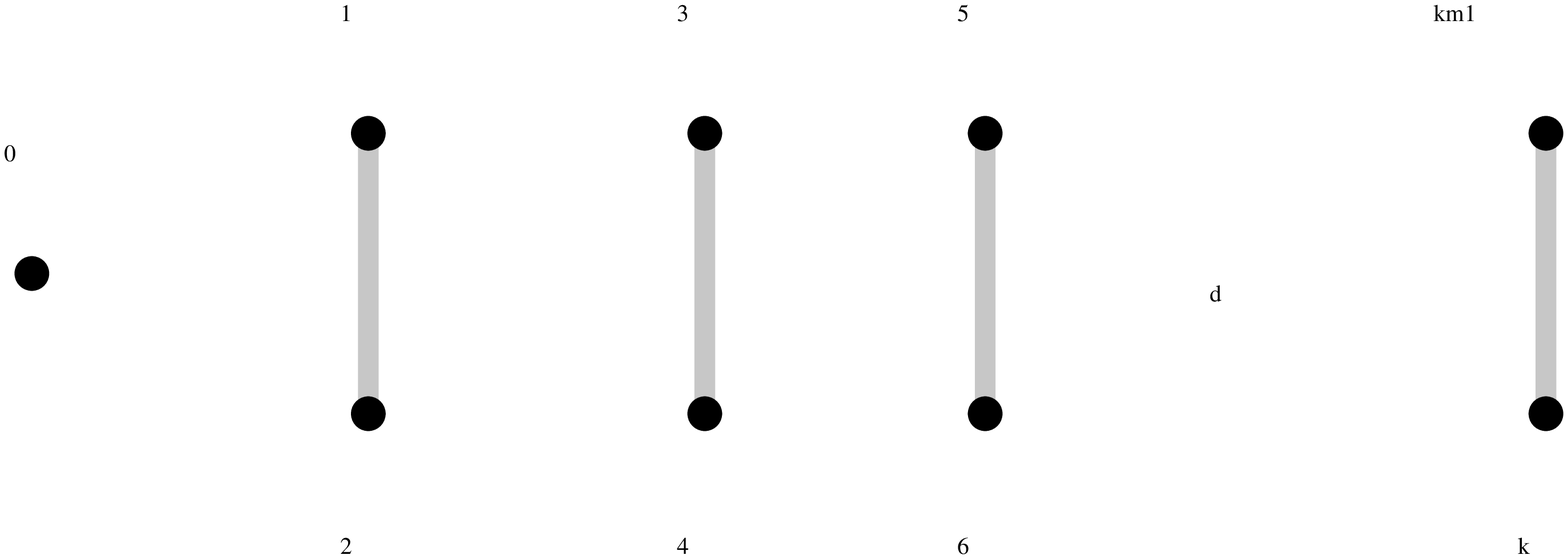}
\end{center}
\caption{\label{fig:barbellgroupodd}}
\end{figure}

Define the pertubation series
\begin{equation}\label{eq:sf:perturb2}
F (\g, t, \x )  = \exp \bigl(\sum_{k \geq 3}(g_{k}\Tr \x ^{k})t^{k}/M (k)\bigr).
\end{equation}
Our goal is to compute the coefficient of $t^{N}$ in
\[
\ipL{F (\g,t,X)}  \in \QQ [g_{3},g_{4},\dotsc ]\fps{t}.
\]
\eqref{eq:perturb2} in terms of barbell graphs. As in \S
\ref{s:aa:perturb}, this coefficient is a homogeneous polynomial of
degree $N$ in the monomials $\g^{\m}$, where $N = N (\m)$, so we
compute the coefficient of each $\g^{\m}t^N$.  Let $\Xi_{\m}$ be the
union $\coprod \Xi_{k}$, where we take $m_{k}$ copies of $\Xi_{k}$.
Let $G (\m)$ be the collection of barbell graphs built from
$\Xi_{\m}$, where in every block of barbells $\Xi_{k}$ the number of
green vertices is even.

\begin{thm}\label{th:sf:perturb}
In \eqref{eq:sf:perturb2}, the coefficient of $\g^{\m}$ in the
coefficient of $t^{N (\m )}$ is
\[
\sum_{\Gamma \in G (\m)} \frac{n^{N (\Gamma)} }{|\Aut \Gamma |},
\]
where $\Aut \Gamma$ consists of automorphisms induced by acting by the
barbell groups in the blocks and permutations of the blocks with the
same number of vertices.
\end{thm}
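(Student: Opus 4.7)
The plan is to mimic the proof of Theorem \ref{th:perturb}: expand the exponential in \eqref{eq:sf:perturb2}, apply a multi-factor generalization of Theorem \ref{th:secondmodel} to the resulting product of traces, and pass from labelled barbell graphs to orbits using the orbit-stabilizer theorem. Concretely, I would write
\[
F (\g, t, \x) = \prod_{k\geq 3} \sum_{m_k\geq 0} \frac{g_k^{m_k}\, t^{k m_k}\, (\Tr \x^k)^{m_k}}{m_k!\, M(k)^{m_k}},
\]
so that the coefficient of $\g^{\m} t^{N(\m)}$ in $\ipL{F}$ is
\[
\frac{1}{\prod_k m_k!\, M(k)^{m_k}}\, \Bigl\langle \prod_{k\geq 3} (\Tr \x^k)^{m_k}\Bigr\rangle_{\LL}.
\]
The denominator $\prod_k m_k!\, M(k)^{m_k}$ is exactly the order of the group $H := \prod_k (B_{m(k)}^{m_k} \rtimes S_{m_k})$ that permutes identical copies of $\Xi_k$ and acts on each copy by $B_{m(k)}$; by construction, $G(\m)$ is the set of $H$-orbits on the set $G^{*}(\m)$ of all labelled barbell graphs on $\Xi_{\m}$ satisfying the parity constraint in every block.

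The heart of the proof is the multi-factor identity
\[
\Bigl\langle \prod_{k\geq 3} (\Tr \x^k)^{m_k}\Bigr\rangle_{\LL} = \sum_{\Gamma \in G^{*}(\m)} n^{N(\Gamma)},
\]
which I would establish by running the argument of Theorem \ref{th:secondmodel} one factor at a time. By Proposition \ref{prop:tracepolys}, every monomial of $\Tr \x^k$ has an even number of $x$-factors (this is what forces $|S_{gr}|$ to be even in each block), so a choice of monomial in each of the $\sum_k m_k$ trace-power factors is the same datum as a parity-correct coloring of the vertices of $\Xi_{\m}$. Wick's theorem then pairs the black slots (the $x_0$-variables) among themselves and the green slots (the $x_p$-variables) among themselves, with the coordinate index of each connected green component free over $\{1,\dotsc,n\}$; this produces exactly $n^{N(\Gamma)}$, following the path-graph argument in Theorem \ref{th:secondmodel}.

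The subtlety, and in my view the main obstacle, is the odd-$k$ case. For $k$ odd, Proposition \ref{prop:tracepolys} forces the number of $x_0$-slots in every copy of $\Tr \x^k$ to be odd. This is precisely why $\Xi_k$ for odd $k$ is defined with $(k-1)/2$ barbells plus one isolated vertex fixed by $B_{m(k)}$: the isolated vertex supplies the parity mismatch and makes the bijection between admissible colorings of $\Xi_{\m}$ and monomials of $\prod (\Tr \x^k)^{m_k}$ go through. One has to check that the ``pair the blacks first, read off path-graphs with green endpoints'' bijection of Theorem \ref{th:secondmodel} still hits every monomial exactly once per Wick pairing in this mixed setting; this is a routine but somewhat delicate bookkeeping step.

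Finally, the orbit-stabilizer theorem applied to the action of $H$ on $G^{*}(\m)$ gives
\[
\sum_{\Gamma \in G^{*}(\m)} n^{N(\Gamma)} = |H| \sum_{\Gamma \in G(\m)} \frac{n^{N(\Gamma)}}{|\Aut \Gamma|},
\]
and dividing through by $|H| = \prod_k m_k!\, M(k)^{m_k}$ yields the formula claimed in the theorem.
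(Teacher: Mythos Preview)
Your proposal is correct and gives a complete argument. It is, however, a somewhat different route from the paper's own (very brief) proof. The paper invokes the exponential formula for generating functions together with Theorem~\ref{th:sf:connected}, i.e.\ it packages everything in terms of \emph{connected} barbell graphs and then exponentiates to get all graphs weighted by $1/|\Aut\Gamma|$. You instead bypass Theorem~\ref{th:sf:connected} entirely: you expand the exponential by hand, prove a multi-block analogue of the identity \eqref{eq:noauts} from Theorem~\ref{th:secondmodel} directly via Wick's theorem and the trace formula, and then pass to orbits by orbit--stabilizer. Your approach is more elementary and self-contained (it does not need the closed form for $B_c(x)$), while the paper's approach is terser and leans on the standard connected/disconnected machinery. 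The two subtleties you flag---the parity constraint on green vertices in each block, and the role of the isolated vertex in $\Xi_k$ for odd $k$---are exactly the points (i) and (ii) singled out in the paper's proof, and your explanation of why the isolated vertex absorbs the forced odd parity of the $x_0$-count when $k$ is odd is the right one.
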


\begin{proof}
Just like Theorem \ref{th:perturb}, the proof follows from the
exponential formula for generating functions together with Theorem
\ref{th:sf:connected}.  The only subtleties are that (i) one must have
an even number of green vertices in each subcollection of barbells,
and (ii) the connected components need not be closed $1$-manifolds,
but can be $1$-manifolds with boundary.  The first follows from the
trace formula in Proposition \ref{prop:tracepolys}, and the second
follows since we now allow odd numbers of vertices in each block of
barbells.
\end{proof}

\begin{ex}\label{ex:sf:3.3}
We give an example to show how to apply Theorem \ref{th:sf:perturb}.
We compute the coefficient of $g_{3}^{2}$, which corresponds to
$\ipL{(\Tr \x ^{3})^{2}}/8$.  Thus $\m = (2,0,\dotsc)$ and we are
building barbell graphs from two blocks, each a copy of $\Xi_{3}$.
The denominator $8$ comes from the product $2\cdot 2 \cdot 2 = M
(3)\cdot M (3) \cdot 2!$.  In particular, we have two actions of the
barbell group $B_{1}$ in each $\Xi_{3}$, and we also have the
involution that exchanges the blocks.  The barbell graphs are shown in
Figures \ref{fig:t330}--\ref{fig:t334}, along with the quantities
$n^{N (\Gamma)}/|\Aut \Gamma |$.  The result is
\[
\frac{\ipL{(\Tr \x ^{3})^{2}}}{2\cdot 2\cdot 2!}=\frac{9}{8}\*n^2
 + \frac{7}{2}\*n
 + \frac{15}{8}.
\]
\end{ex}

\begin{figure}[htb]
\psfrag{8}{$1/8$}
\psfrag{4}{$1/4$}
\psfrag{2}{$1/2$}
\begin{center}
\includegraphics[scale=0.25]{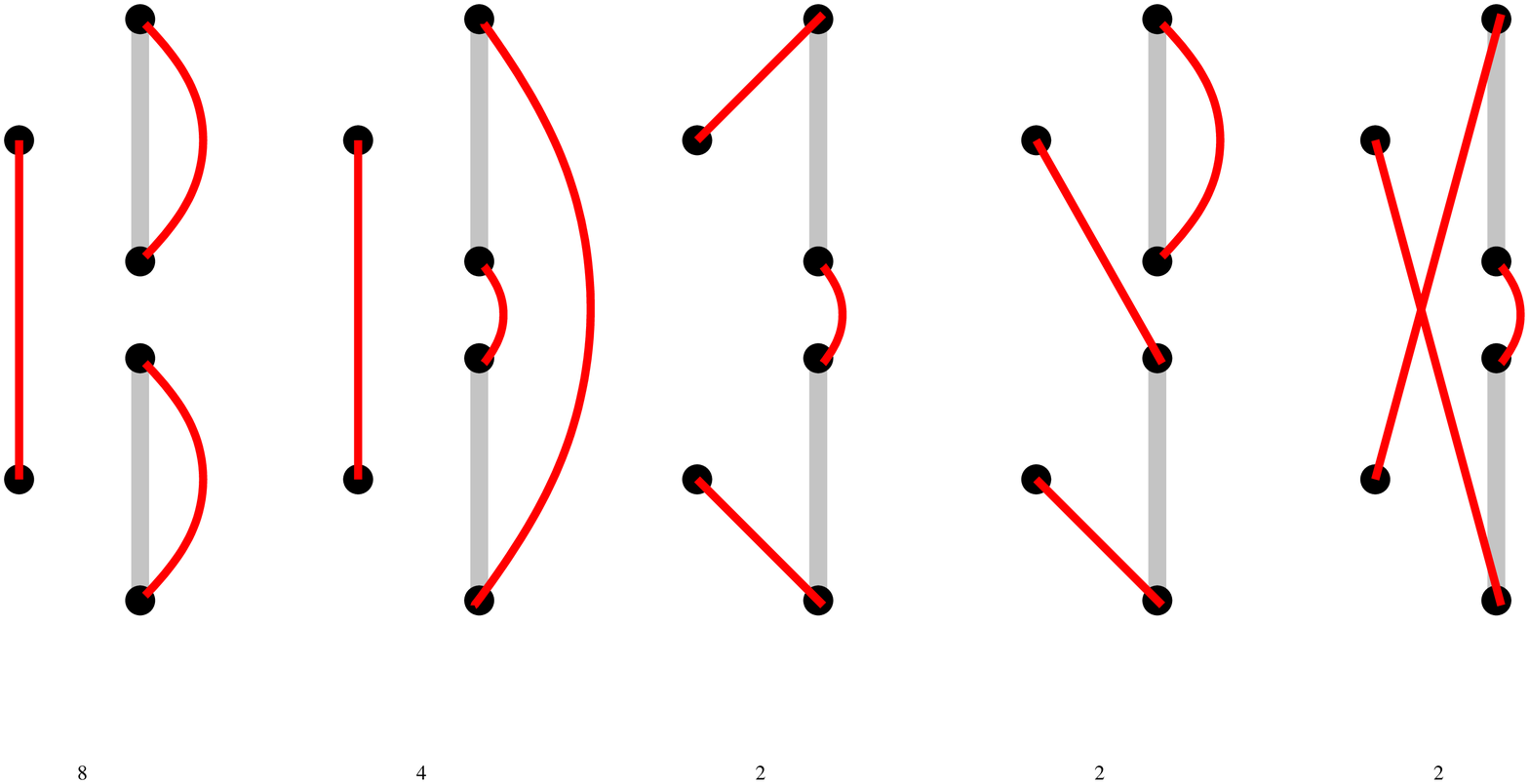}
\end{center}
\caption{\label{fig:t330}}
\end{figure}

\begin{figure}[htb]
\psfrag{n4}{$n/4$}
\psfrag{n2}{$n/2$}
\psfrag{n1}{$n$}
\begin{center}
\includegraphics[scale=0.25]{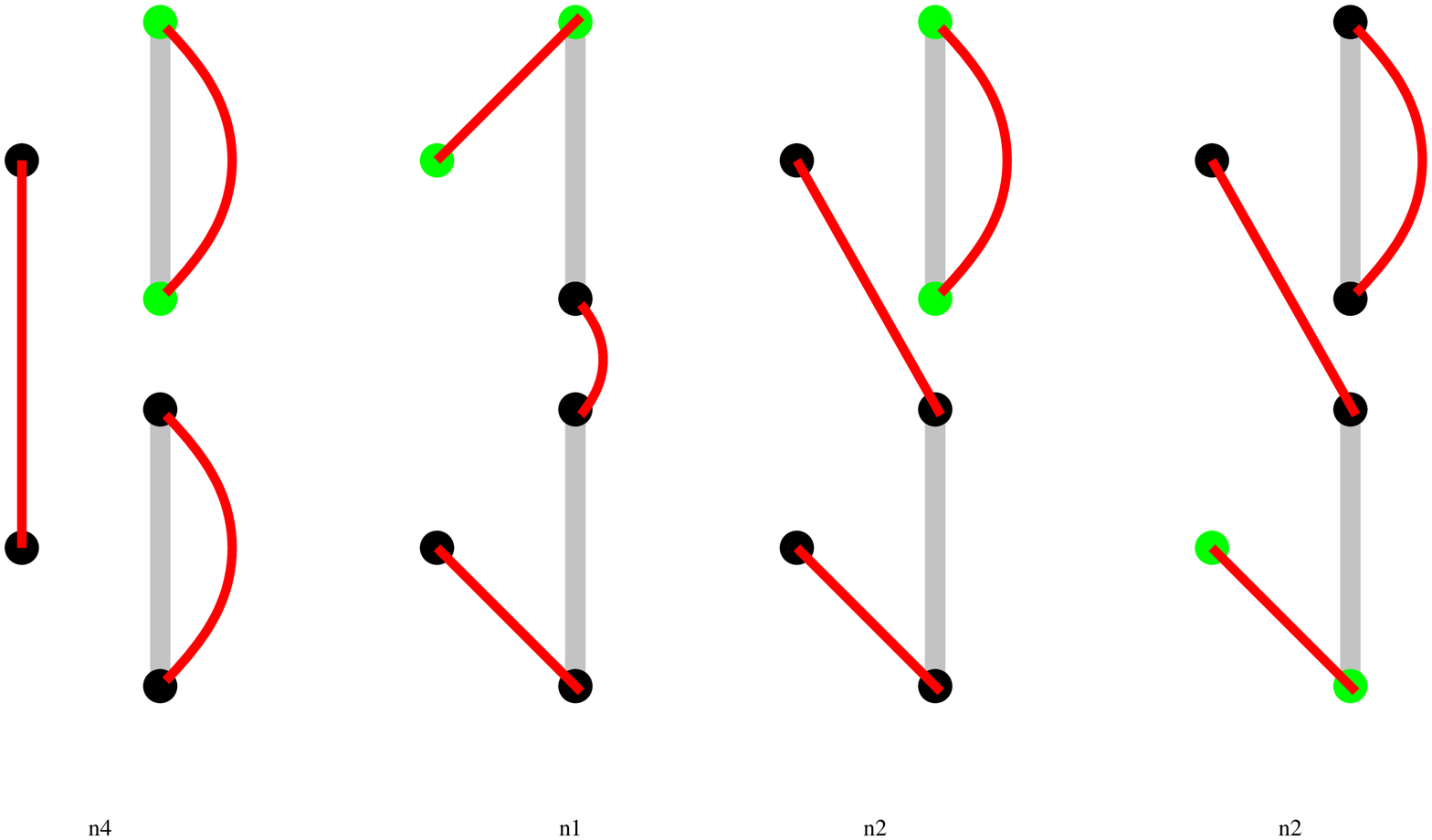}
\end{center}
\caption{\label{fig:t332}}
\end{figure}

\begin{figure}[htb]
\psfrag{n4}{$n/4$}
\psfrag{n2}{$n/2$}
\psfrag{n22}{$n^{2}/2$}
\psfrag{n28}{$n^{2}/8$}
\begin{center}
\includegraphics[scale=0.25]{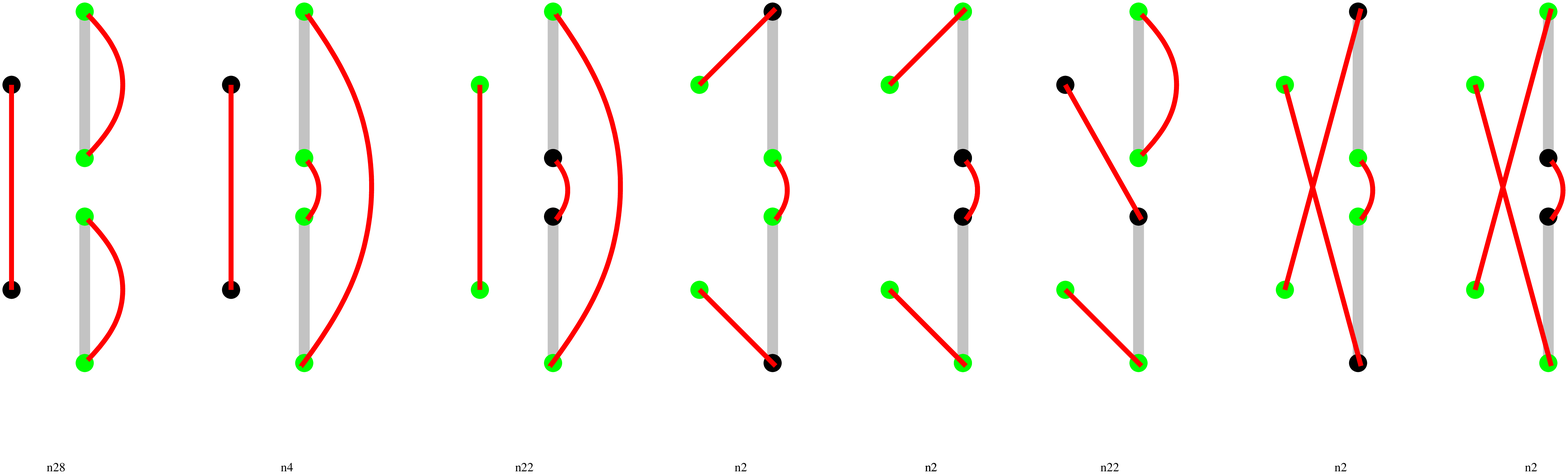}
\end{center}
\caption{\label{fig:t334}}
\end{figure}

\bibliographystyle{amsplain_initials_eprint_doi_url}
\bibliography{exotic}
\end{document}